 \newtheorem{theorem}{Theorem}[section]
 \newtheorem{corollary}[theorem]{Corollary}
 \newtheorem{lemma}[theorem]{Lemma}
 \newtheorem{proposition}[theorem]{Proposition}
 \theoremstyle{definition}
 \newtheorem{definition}[theorem]{Definition}
 \theoremstyle{remark}
 \newtheorem{remark}[theorem]{Remark}
 \numberwithin{equation}{section}
\def \bC {\mathbb C}
\def \bN {\mathbb N}
\def \bR {\mathbb R}
\def \bT {\mathbb T}
\def \bZ {\mathbb Z}
\def \cB {\mathcal B}
\def \cD {\mathcal D}
\def \cF {\mathcal F}
\def \cH {\mathcal H}
\def \cK {\mathcal K}
\def \cL {\mathcal L}
\def \cM {\mathcal M}
\def \cR {\mathcal R}
\def \cS {\mathcal S}
\def \cR {\mathcal R}
\def \cR {\mathcal R}
\def \fg {\mathfrak g}
\def \fU {\mathfrak U}
\def \tr {\text{\rm Tr}}
\def \pr {\text{\rm proj}}
\def\Rep{\text{\rm Re}\ }
\def\dom{\text{\rm Dom}\ }
\def\id{\text{\rm Id} }
\def\Gh{\widehat{G}}
\def\wh{\widehat}
\def\whfpi{{\wh f(\pi)}}
\def \sL{\mathscr L}
\begin{document}
%
%
%
%
%
%
%
%
%
\title[A pseudo-differential calculus on graded nilpotent Lie groups]
 {A pseudo-differential calculus\\ on graded nilpotent Lie groups}
\author[V. Fischer]{V\'eronique Fischer}

\address{Universita degli studi di Padova, DMMMSA, Via Trieste 63,
           35121 Padova, Italy}

\email{fischer@dmsa.unipd.it}

\thanks{The first author acknowledges the support of the London Mathematical Society via the Grace Chisholm Fellowship. 
It was during this fellowship held at King's College London in 2011 that the work was initiated.
The second author was supported in part by the EPSRC Leadership Fellowship  EP/G007233/1.}
\author[M. Ruzhansky]{Michael Ruzhansky}
\address{180 Queen's Gate, Department of Mathematics, Imperial College London, London, 
SW7 2AZ, United Kingdom}
\email{m.ruzhansky@imperial.ac.uk}

\subjclass{Primary 35S05; Secondary 43A80}

\keywords{Pseudo-differential operators, nilpotent Lie groups}

\date{July 2012, revised May 2013}

\begin{abstract}
In this paper, 
we present first results of our investigation 
regarding symbolic pseudo-differential calculi
on nilpotent Lie groups.
On any graded Lie group, 
we define classes of symbols using difference operators.
The operators are obtained
from these symbols
via the natural quantisation  given by the representation theory.
They form an algebra of operators 
which shares many properties with the usual H\"ormander calculus.
\end{abstract}

\maketitle


\section{Introduction}

In the last five decades 
pseudo-differential operators 
have become a standard tool in the study of partial differential equations.
It is natural to try 
to define analogues of the Euclidean pseudo-differential calculus in other settings.
On one hand, while it is always possible to obtain a local calculus on any (connected) manifold, 
the question becomes much harder for global calculi.
If, in addition, one requires a notion of symbol,
the quasi inherent context is the one of Lie groups of type~I
where a Plancherel-Fourier analysis is available. 
On the other hand,
from the viewpoint of  
what can actually be done at the level of operators,
the investigation should start 
in the context of Lie groups with polynomial volume-growth
where analysis of integral operators  is quite well understood.
Therefore the natural setting to start developing global pseudo-differential calculi
is nilpotent or compact Lie groups, together with their semi-direct products.

The genesis of this idea began quite some time ago;
if a starting line had to be drawn, 
it would be in the seventies 
with the work of Elias Stein 
and his collaborators
Folland, Rotschild, etc. 
(see e.g. \cite{folland+stein-1974,rothschild+stein}), 
and
continued, in the next decade,  
with the work of Beals and Greiner amongst many others.
Their motivation came from the study 
of differential operators on CR or contact manifolds,
modelling locally the operators 
on homogeneous left-invariant convolution operators on nilpotent groups
(cf \cite{ponge}). 
In `practice' and from this motivation, 
only nilpotent Lie groups 
 endowed with some compatible structure of dilations, 
 i.e. homogeneous  groups and, more particularly, graded Lie groups,
 are considered. 
 The latter is also the setting of our present investigation.

Since the seventies, 
several global calculi of operators on homogeneous Lie groups
have appeared.
However 
they were often calculi of left-invariant operators
with the following notable exceptions 
to the authors' knowledge.
Beside Dynin's construction of certain operators 
on the Heisenberg group in  \cite{dynin,folland-1994},
a non-invariant pseudo-differential calculus on any homogeneous group
 was developed in \cite{cggp}
 but this is not symbolic since the operator classes are defined via properties of the kernel.
In the revised version of \cite{Tnma},
Taylor  describes several (non-invariant) operator calculi
and, in a different direction, he also explains a way to develop symbolic calculi:
using the representations of the group, he defines 
a general \emph{quantization} 
and the natural \emph{symbols} on any unimodular type I group
(by quantization,  we mean a procedure which associates an operator to a symbol).
He illustrates this on the Heisenberg group 
and obtains there several important applications for, e.g., the study of hypoellipticity.
He uses the fact that, 
 because of the properties of the Schr\"odinger representations of the Heisenberg group, a symbol is a family of operators in the Euclidean space,
themselves given by symbols via the Weyl quantization. 
Recently, 
the attempt at defining suitable classes of Shubin type for these Weyl-symbols  led to another
version of the calculus on the Heisenberg group in \cite{bahouri+FK+gallagher_bk2012}\footnote{and in its revised version}.

Recently as well,
using the global quantization procedure noted in \cite{Tnma},
the second author and Turunen
developed a global symbolic calculus on any compact Lie group  
in \cite{ruzh+turunen_bk2010}.
They successfully defined symbol classes 
so that the quantization procedure makes sense 
and the resulting operators form an algebra of operators 
with properties `close enough' to the one enjoyed by the Euclidean H\"ormander calculus (in fact, in a later work with Wirth \cite{RTW}, they showed that 
the calculus in \cite{ruzh+turunen_bk2010} leads to the usual H\"ormander operator classes on $\bR^{n}$ extended to compact connected manifolds).
Their approach is valid for any compact Lie group
whereas 
the calculus of \cite{bahouri+FK+gallagher_bk2012} 
is very specific to the Heisenberg group.
The crucial and new ingredient  in the definition of symbol classes in \cite{ruzh+turunen_bk2010} was defining \emph{difference operators} 
in order to replace the Euclidean derivatives in the Fourier variables.
These difference operators allow expressing the
pseudo-differential behaviour directly on the group.

In our present investigations, we build upon this notion
to study operators in the nilpotent setting.
However it is not possible to extend readily  the results of the compact case developed in
 \cite{ruzh+turunen_bk2010}
  to the nilpotent context.
 Some technical difficulties appear  because, 
 for example, the dual of $G$ is no longer discrete
 and
 the unitary irreducible representations  are almost all infinite dimensional.
More problematically there is no Laplace-Beltrami operator
and
one expects to replace it 
by a sub-Laplacian on stratified Lie groups
or, more generally, by a positive Rockland operator $\cR$ on graded Lie groups;
such operators are not central.
Hence new technical ideas are needed to develop
a pseudo-differential calculi on graded Lie groups
using the natural quantization 
together with the notion of difference operators from \cite{ruzh+turunen_bk2010}.

The results that we have obtained in our investigation of this question so far 
were presented in the talk given by the first author at the conference \emph{Fourier analysis and pseudo-differential operators}, Aalto University, 25-30 June, 2012.
They are the following
(here $1\geq \rho\geq \delta \geq 0$):

\begin{description}

\item[(R1)] 
The symbol classes form an algebra of operators $\cup_{m\in \bR} S^m_{\rho,\delta}$  stable by taking the adjoint. 

\item[(R2)]
Let $\rho\not=0$.
The operators obtained by quantization 
from $\cup_{m\in \bR} S^m_{\rho,\delta}$ 
form an algebra of operators  $\cup_{m\in \bR} \Psi^m_{\rho,\delta}$  
stable by taking the adjoint.

\item[(R3)]
The set of operators $\cup_{m\in \bR} \Psi^m_{\rho,\delta}$
contains the left-invariant calculus.

\item[(R4)] 
 The kernels are of Calderon-Zygmund type on homogeneous Lie groups;
in particular our operators of order 0 are more singular than their Euclidean counterparts.

\item[(R5)]
If $\rho\in [0,1)$,
then the operators in $\Psi^0_{\rho,\rho}$ are continuous on $L^2(G)$.

\item[(R6)]
$(\id+\cR)^{\frac m \nu}\in \Psi^m_{1,0}$, where $\cR$ is a positive Rockland operator
of degree $\nu$, see Section \ref{SEC:RO}.

\item[(R7)]
Positive operators of the calculus satisfy sharp G{\aa}rding inequalities.

\end{description}

As a consequence from Results (R2), (R5) and (R6), if $\rho\not=0$, 
any pseudo-differential operator is continuous on the Sobolev spaces 
with the loss of derivatives being controlled by the order.
All those properties justify, from our viewpoint, 
the choice of vocabulary of pseudo-differential calculi. 

Since the conference in Aalto University of June 2012,
these results, together with their complete proof 
 and other progress made by the authors on the subject have started to be collected in the 
monograph \cite{FR}, see also \cite{RF-cras1}.

In this paper, due to the lack of space,
we will state and prove the following parts of Results (1-7). 
Result (R1) is proved in Subsection  \ref{subsec_1prop_symbols}.
The proof of Result (R2) is given in  Subsection \ref{subsec_composition}
 but relies on Result (R4) which is stated 
in  Subsection \ref{subsec_1prop_kernels} 
and  only partially proved. 
In Subsection \ref{subsec_1ex},
Result (R3) is stated and proved 
while  (R6) is stated in greater generality but not proved in this paper.
The precise statements and proofs of Results (R5) and (R7) 
can be found in \cite{FR}.

\medskip

The paper is organised as follows.
In Section \ref{sec_preliminaries}, 
we explain the precise setting of our investigation for the group,
the Sobolev spaces involved here and the group Fourier transform.
In Section \ref{sec_quantization_S},
we precise our definition of quantization and symbol classes.
In Section \ref{sec_prop_symbol_kernel_op}, 
we give the properties of the symbols 
and of the corresponding kernels and operators stated above.

\medskip

{\it Convention:} All along the paper, 
$C$ denotes a constant which may vary from line to line.
We denote by $\lceil r\rceil$ the smallest integer which is strictly greater than the real $r$.

\section{Preliminaries}
\label{sec_preliminaries}

In this section, 
 we set some notation and recall some known properties
 regarding the groups under investigation,  the Taylor expansion in this context 
 and representation theory.

\subsection{The group $G$}

Here we recall briefly the definition of graded nilpotent Lie groups
and their natural homogeneous structure.
A complete description of the notions of graded and homogeneous nilpotent Lie group may be found in \cite[Chapter 1]{folland+stein_bk82}.

We will be concerned with graded Lie groups $G$
which means that $G$ is a connected and simply connected 
Lie group 
whose Lie Lie algebra $\mathfrak g$ 
admits an $\bN$-gradation
$\mathfrak g= \oplus_{\ell=1}^\infty \mathfrak g_{\ell}$
where the $\mathfrak g_{\ell}$, $\ell=1,2,\ldots$ 
are vector subspaces of $\mathfrak g$,
almost all equal to $\{0\}$
and satisfying 
$[\mathfrak g_{\ell},\mathfrak g_{\ell'}]\subset\mathfrak g_{\ell+\ell'}$
for any $\ell,\ell'\in \bN$.
This implies that the group $G$ is nilpotent.
Examples of such groups are the Heisenberg group and more generally any stratified groups (which by definition correspond to the case $\fg_1$ generating the full Lie algebra $\fg$).

Let $\{X_1,\ldots X_{n_1}\}$ be a basis of $\mathfrak g_1$ (this basis is possibly reduced to $\{0\}$), let
$\{X_{n_1+1},\ldots,  X_{n_1+n_2}\}$ a basis of $\mathfrak g_2$
and so on, so that we obtain a basis 
$X_1,\ldots, X_n$ of $\mathfrak g$ adapted to the gradation.
Via the exponential mapping $\exp_G : \mathfrak g \to G$, we   identify 
the points $(x_{1},\ldots,x_n)\in \bR^n$ 
 with the points  $x=\exp_G(x_{1}X_1+\cdots+x_n X_n)$ in $G$.
Consequently we allow ourselves to denote by $C(G)$, $\cD(G)$ and $\cS(G)$ etc...
the spaces of continuous functions, of smooth and compactly supported functions or 
of Schwartz functions on $G$ identified with $\bR^n$.
This basis also leads to a corresponding Lebesgue measure on $\mathfrak g$ and the Haar measure $dx$ on the group $G$.

The coordinate function $x=(x_1,\ldots,x_n)\in G\mapsto x_j \in \bR$
is denoted by $x_j$.
More generally we define for every multi-index $\alpha\in \bN_0^n$,
$x^\alpha:=x_1^{\alpha_1} x_2 ^{\alpha_2}\ldots x_{n}^{\alpha_n}$, 
as a function on $G$.
Similarly we set
$X^{\alpha}=X_1^{\alpha_1}X_2^{\alpha_2}\cdots
X_{n}^{\alpha_n}$ in the universal enveloping Lie algebra $\fU(\fg)$ of $\mathfrak g$.

For any $r>0$, 
we define the  linear mapping $D_r:\mathfrak g\to \mathfrak g$ by
$D_r X=r^\ell X$ for every $X\in \mathfrak g_\ell$, $\ell\in \bN$.
Then  the Lie algebra $\mathfrak g$ is endowed 
with the family of dilations  $\{D_r, r>0\}$
and becomes a homogeneous Lie algebra in the sense of 
\cite{folland+stein_bk82}.
 The weights of the dilations are the integers $\upsilon_1,\ldots, \upsilon_n$ given by $D_r X_j =r^{\upsilon_j} X_j$, $j=1,\ldots, n$.
 The associated group dilations are defined by
$$
r\cdot x
:=(r^{\upsilon_1} x_{1},r^{\upsilon_2}x_{2},\ldots,r^{\upsilon_n}x_{n}),
\quad x=(x_{1},\ldots,x_n)\in G, \ r>0.
$$
In a canonical way  this leads to the notions of homogeneity for functions and operators.
For instance
the degree of homogeneity of $x^\alpha$ and $X^\alpha$,
viewed respectively as a function and a differential operator on $G$, is 
$[\alpha]=\sum_j \upsilon_j\alpha_{j}$.
Indeed, let us recall 
that a vector of $\mathfrak g$ defines a left-invariant vector field on $G$ 
and more generally 
that the universal enveloping Lie algebra of $\mathfrak g$ 
is isomorphic with the left-invariant differential operators; 
we keep the same notation for the vectors and the corresponding operators. 

Recall that a \emph{homogeneous norm} on $G$ is a continuous function $|\cdot| : G\rightarrow [0,+\infty)$ homogeneous of degree 1
on $G$ which vanishes only at 0.
Any homogeneous norm satisfies a triangular inequality up to a constant.
Any two homogeneous norms are equivalent.
 For example
 \begin{equation}
 \label{eq_norm_||nuo}
|x|_{\nu_o}:=\left(\sum_{j=1}^n x_j^{2\frac{\nu_o}{\upsilon_j}}\right)^{\frac1{2\nu_o}}
\end{equation}
with $\nu_o$ a common multiple to the weights $\upsilon_1,\ldots,\upsilon_n$.
 
 Various aspects of analysis on $G$ can be developed in a comparable way with the Euclidean setting 
\cite{coifman+weiss-LNM71}, sometimes replacing the topological dimension 
$$
n:=\dim G =\sum_{\ell=1}^\infty\dim \fg_\ell
 ,
$$
of the group $G$ by its homogeneous dimension
$$
Q:=\sum_{\ell=1}^\infty \ell \dim \fg_\ell
=  \upsilon_1 +\upsilon_2 +\ldots +\upsilon_n 
 .
$$

\subsubsection{Taylor expansions on $G$} 
\label{subsubsec_Taylor}
In the setting of graded Lie groups one can obtain the left or right mean value theorem and left or right Taylor expansions adapted to the homogeneous structure \cite[Theorem 1.42]{folland+stein_bk82}. Let us give the statement for left invariance. We will need the following definition: 
the (left) Taylor polynomial of homogeneous degree $M$ of a function $f\in C^{M+1}(G)$ at a point $x\in G$ is by definition the polynomial $P_{x,M}^{(f)}$ satisfying
$$
X^\alpha P_{x,M}^{(f)} (0)=
\left\{\begin{array}{ll}
X^\alpha f(x)
& \mbox{whenever}\quad \alpha\in \bN^n \ \mbox{with} \ [\alpha]\leq M ,
\\
0 & \mbox{if} \ [\alpha]>M
.
\end{array}\right.
$$
We also define the remainder to be
$$
R_{x,M}^{(f)}(z):=f(xz) -P_{x,M}^{(f)}(z).
$$

\begin{proposition}[Mean value and Taylor expansion \cite{folland+stein_bk82}]
\label{prop_Taylor}
Let us fix a homogeneous norm $|\cdot |$ on $G$.
\begin{enumerate}
\item (Mean value property) There exist positive group constants $C_0$ and $b$ such that for any function $f\in C^1(G)$, we have
$$
|f(xy) -f(x)| \leq C_0 \sum_{j=1}^n |y|^j \sup_{|z|\leq b |y|} |X_j f(xz)|.
$$
\item (Taylor expansion) For each $M\in \bN_0$ there exist positive group constants $C_M$ such that for any function $f\in C^{M+1}(G)$, we have
$$
\forall y\in G\qquad 
|R_{x,M}^{(f)}(y)| \leq C_M 
\sum_{\substack{[\alpha] > M\\
|\alpha| \leq \lceil M\rceil}}
|y|^{[\alpha]} \sup_{|z|\leq b^{M+1} |y|} |X^\alpha f(xz)|
,
$$
 where $\lceil M\rfloor :=\max\{ |\beta|: \beta\in \bN_0^n\ 
 \mbox{with}\ [\beta]\leq M\}$.
 \end{enumerate}
\end{proposition}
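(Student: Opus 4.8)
\medskip
\emph{Proof strategy.} I would reduce both assertions to the case $x=e$ by left-invariance and then integrate along a dilation-adapted path. Writing $f_x(z):=f(xz)$, left-invariance of the $X_j$ gives $(X^\alpha f_x)(z)=(X^\alpha f)(xz)$, so (1) is the estimate of $|f_x(y)-f_x(0)|$ and (2) the estimate of $|R_{x,M}^{(f)}(y)|=|f_x(y)-P_{x,M}^{(f)}(y)|$, each against suprema of the $X^\alpha f_x$ over a ball centred at $0$. Fix a homogeneous norm $|\cdot|$ and work in exponential coordinates of the first kind, identifying $y$ with $(t_1,\dots,t_n)$ when $y=\exp_G(\sum_j t_jX_j)$. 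The path I use is the segment $\eta(s):=\exp_G(s\sum_j t_jX_j)$, $s\in[0,1]$, from $0$ to $y$. Since $\eta(s)$ has coordinates $(st_1,\dots,st_n)$, since $(t_j)\mapsto\max_j|t_j|^{1/\upsilon_j}$ is a homogeneous norm on $\bR^n$ comparable by compactness to $|\exp_G(\sum_j t_jX_j)|$, and since $s^{1/\upsilon_j}\le1$ for $s\in[0,1]$, the path stays inside $\{|z|\le b|y|\}$ for a group constant $b$, which I may assume to be $\ge1$; the same comparison gives $|t_j|\le C|y|^{\upsilon_j}$ for all $j$.

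\emph{Mean value property.} For a left-invariant field $X$ and a smooth $h$ one has $\frac{d}{ds}h(q\exp_G(sX))=(Xh)(q\exp_G(sX))$. Taking $h=f_x$, $q=e$ and $X=\sum_j t_jX_j$, the fundamental theorem of calculus along $\eta$ gives
\[
f(xy)-f(x)=\int_0^1\sum_{j=1}^n t_j\,(X_jf)(x\,\eta(s))\,ds .
\]
Bounding the integrand by $\sum_j|t_j|\,\sup_{|z|\le b|y|}|X_jf(xz)|$ and inserting $|t_j|\le C|y|^{\upsilon_j}$ proves (1); since $P_{x,0}^{(f)}=f(x)$, this is also (2) for $M=0$, the sum there running over the multi-indices with $[\alpha]>0$ and $|\alpha|=1$.

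\emph{Taylor expansion.} I then induct on $M\in\bN_0$, the base case $M=0$ being the above. For the step, put $g:=R_{x,M}^{(f)}$, so $g(0)=0$. The algebraic core is that, $\fg$ being graded, its enveloping algebra $\fU(\fg)$ is graded too, so reordering $X^\beta X_k$ into the Poincar\'e--Birkhoff--Witt basis produces only monomials $X^\gamma$ with $[\gamma]=[\beta]+\upsilon_k$ exactly (iterated brackets never lower the homogeneous degree). Combined with the defining relations of the Taylor polynomial, this shows that $X^\beta(X_kP_{x,M}^{(f)})(0)$ equals $(X^\beta(X_kf))(x)$ for $[\beta]\le M-\upsilon_k$ and vanishes for $[\beta]>M-\upsilon_k$ --- precisely the defining values of $P_{x,M-\upsilon_k}^{(X_kf)}$; by uniqueness of the Taylor polynomial, $X_kP_{x,M}^{(f)}=P_{x,M-\upsilon_k}^{(X_kf)}$ when $\upsilon_k\le M$, and $X_kP_{x,M}^{(f)}=0$ when $\upsilon_k>M$. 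Hence $X_kg=R_{x,M-\upsilon_k}^{(X_kf)}$ (resp.\ $X_kg=(X_kf)_x$). Applying the path identity to $g$,
\[
R_{x,M}^{(f)}(y)=\int_0^1\sum_{k=1}^n t_k\,(X_kg)(\eta(s))\,ds ,
\]
I estimate, for each $k$ with $\upsilon_k\le M$, the quantity $R_{x,M-\upsilon_k}^{(X_kf)}(\eta(s))$ by the inductive hypothesis applied at the point $\eta(s)$, and the remaining terms ($\upsilon_k>M$, in particular all of them when $M=0$) directly as in the mean value step. Substituting $|\eta(s)|\le b|y|$ and $|t_k|\le C|y|^{\upsilon_k}$, then re-expanding $X^\alpha X_k=\sum_{[\mu]=[\alpha]+\upsilon_k}c_\mu X^\mu$, each contribution becomes a multiple of $|y|^{[\mu]}\,\sup_{|z|\le b^{M+1}|y|}|X^\mu f(xz)|$ with $[\mu]=\upsilon_k+[\alpha]>M$ (as $[\alpha]>M-\upsilon_k$) and $|\mu|\le|\alpha|+1\le\lceil M-\upsilon_k\rceil+1\le\lceil M\rceil$ (as $|\alpha|\le\lceil M-\upsilon_k\rceil$ in the inductive hypothesis and $\upsilon_k\ge1$); the nested radius $b^{M-\upsilon_k+1}\cdot b$ is $\le b^{M+1}$ because $\upsilon_k\ge1$ and $b\ge1$. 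Summing over $k$ yields the asserted bound, with $C_M$ assembled from the lower-order constants; since $\lceil M\rceil=M+1$ for integer $M$, the hypothesis $f\in C^{M+1}(G)$ is exactly what the recursion consumes.

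\emph{Main obstacle.} The analytic part --- comparing $|\cdot|$ with $\max_j|t_j|^{1/\upsilon_j}$ and keeping the path in a fixed ball --- is routine once the dilation structure is in place. The genuinely substantive point is the identity $X_kP_{x,M}^{(f)}=P_{x,M-\upsilon_k}^{(X_kf)}$, together with the repeated re-expansion of iterated left-invariant derivatives in the PBW basis; both rest on $\fg$ being graded, so that $[\fg_a,\fg_b]\subseteq\fg_{a+b}$ prevents iterated brackets from lowering the homogeneous degree. The rest is bookkeeping: carrying the index conditions $[\alpha]>M$, $|\alpha|\le\lceil M\rceil$ and the geometrically growing radius $b^{M+1}$ through the induction.
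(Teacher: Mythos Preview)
The paper does not give its own proof of this proposition; it is quoted from Folland--Stein \cite{folland+stein_bk82} (their Theorems~1.33 and~1.42) and used as a black box. Your argument is essentially the standard proof from that reference: reduce to $x=e$ by left-invariance, integrate along the one-parameter subgroup $s\mapsto\exp_G\!\big(s\sum_j t_jX_j\big)$ to obtain the mean value estimate, and then induct on $M$ via the identity $X_kP_{x,M}^{(f)}=P_{x,M-\upsilon_k}^{(X_kf)}$. That identity, and the re-expansion $X^\alpha X_k=\sum_{[\mu]=[\alpha]+\upsilon_k}c_\mu X^\mu$ with $|\mu|\le|\alpha|+1$, are exactly where the grading of $\fg$ (hence of $\fU(\fg)$) enters, and you have identified and justified this correctly. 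The bookkeeping on the radius $b^{M+1}$ and on the index constraints $[\mu]>M$, $|\mu|\le\lceil M\rceil$ is sound (using $\upsilon_k\ge1$ and $b\ge1$). So the proposal is correct and matches the cited source; there is nothing further to compare against in the paper itself.
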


The control can be improved in the stratified case
(again see \cite{folland+stein_bk82})
 but we  present here the more general case of the graded groups.

\begin{remark}\label{rem_vector_valued_taylor}
Proposition \ref{prop_Taylor}
extends easily to functions which are vector valued in a Banach space, replacing the modulus by operator norms.
\end{remark}

The Taylor polynomials can be described in the following way.
Let $(q_\alpha)_{\alpha\in \bN^n}$ be the basis of  polynomials 
obtained from  by the duality 
$\langle X,p\rangle:=Xp(0)$ where $X\in \fU(\fg)$ and $p$ is a polynomial.
This means that the $q_\alpha$'s are the polynomials satisfying 
\begin{equation}
\label{def_qalpha}
\forall \alpha,\beta\in \bN^n_0\qquad
X^{\beta} q_{\alpha}(0) 
=
\left\{\begin{array}{ll}
0&\mbox{if}\ \alpha \not = \beta
\\
1&\mbox{if}\ \alpha = \beta
\end{array}
\right. .
\end{equation}
We can then write the Taylor polynomial as
$$
P_{x,M}^{(f)} =\sum_{[\alpha]\leq M} X^\alpha f(x) q_\alpha 
.
$$

We will need the following properties of the polynomials $(q_\alpha)$
defined via \eqref{def_qalpha}:
\begin{lemma}
\label{lem_property_qalpha}
\begin{itemize}
\item 
Each polynomial  $q_\alpha$ is homogeneous  of degree
$[\alpha]$.
Moreover, $(q_\alpha)_{[\alpha]=d}$ is a basis of the space of homogeneous polynomials of degree $d$.
\item  
For any $\alpha_1,\alpha_2\in \bN_0^n$,
the polynomial
$q_{\alpha_1}q_{\alpha_2}$ can be written as a linear combination  
of $q_{\alpha}$ with $[\alpha]=[\alpha_1]+[\alpha_2]$.
\item For any $\alpha\in \bN_0^n$ and $x,y\in G$,
$$
q_\alpha(xy) = \sum_{[\alpha_1] + [\alpha_2] =[\alpha] } c_{\alpha_1,\alpha_2} q_{\alpha_1}(x) q_{\alpha_2}(y) ,
$$
where the coefficients $c_{\alpha_1,\alpha_2}$ are real and, moreover,
$$
c_{\alpha_1,0}=\left\{\begin{array}{ll}
1&\mbox{if}\ \alpha_1 =\alpha\\
0&\mbox{otherwise}
\end{array}\right.
\quad\mbox{and}\quad
c_{0,\alpha_2}=\left\{\begin{array}{ll}
1&\mbox{if}\ \alpha_2 =\alpha\\
0&\mbox{otherwise}
\end{array}\right.
.
$$
\end{itemize}
\end{lemma}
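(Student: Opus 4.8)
The plan is to prove the three items in order, the genuine content lying in the first; the other two are then formal consequences. Throughout I write $\cP=\bigoplus_{d\ge 0}\cP_d$ for the space of (real) polynomials on $G\simeq\bR^n$, where $\cP_d$ is the finite-dimensional subspace of those $p$ homogeneous of degree $d$ (that is, $p(r\cdot x)=r^{d}p(x)$ for all $r>0$); thus $\{x^{\gamma}:[\gamma]=d\}$ is a basis of $\cP_d$, and a homogeneous $q_{\alpha}$ must lie in $\cP_{[\alpha]}$.

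For item one I would first record two elementary observations. First, each $X_j$ is left-invariant, sends polynomials to polynomials, and is homogeneous of degree $\upsilon_j$; hence $X_j\colon\cP_d\to\cP_{d-\upsilon_j}$, and, iterating, $X^{\beta}\colon\cP_d\to\cP_{d-[\beta]}$, which is $\{0\}$ when $[\beta]>d$. Second, evaluation at $0$ annihilates $\cP_d$ for every $d\ge1$. Together these give: for $p\in\cP_d$ one has $X^{\beta}p(0)=0$ unless $[\beta]=d$. Granting for the moment that the restrictions to $\cP_d$ of the functionals $p\mapsto X^{\gamma}p(0)$ with $[\gamma]=d$ are linearly independent, the dual basis of $\cP_d$ they pick out is a family $(q_{\alpha})_{[\alpha]=d}\subset\cP_d$ with $X^{\beta}q_{\alpha}(0)=\delta_{\alpha,\beta}$ for $[\beta]=d$ and, by the two observations, $X^{\beta}q_{\alpha}(0)=0$ for $[\beta]\ne d$. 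So these $q_{\alpha}$ satisfy \eqref{def_qalpha}, are homogeneous of degree $[\alpha]$, and $(q_{\alpha})_{[\alpha]=d}$ is a basis of $\cP_d$. Ranging over all $d$ produces the full system $(q_{\alpha})_{\alpha\in\bN_0^n}$; it is a basis of $\cP$ and the $q_{\alpha}$ are unique because, by the two observations again, the whole matrix $\bigl(X^{\beta}x^{\beta'}(0)\bigr)_{\beta,\beta'}$ is block-diagonal in $d=[\beta]=[\beta']$, with invertible blocks by the step below.

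The main obstacle is that linear independence, i.e.\ the invertibility of $M:=\bigl(X^{\gamma}x^{\gamma'}(0)\bigr)_{[\gamma]=[\gamma']=d}$. For this I would use the standard form of the vector fields in exponential coordinates, $X_j=\partial_{x_j}+\sum_{\upsilon_k>\upsilon_j}p_{j,k}\,\partial_{x_k}$ with each $p_{j,k}$ a polynomial vanishing at $0$ (see \cite[Chapter~1]{folland+stein_bk82}), hence of ordinary degree $\ge1$; thus $X_j-\partial_{x_j}$ never lowers the ordinary (isotropic) degree of a polynomial. Consequently $X^{\gamma}x^{\gamma'}$ has a nonzero constant term only when $|\gamma|\ge|\gamma'|$, and when $|\gamma|=|\gamma'|$ that constant term equals $\partial^{\gamma}x^{\gamma'}(0)=\gamma!\,\delta_{\gamma,\gamma'}$. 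Ordering the indices with $[\gamma]=d$ by increasing $|\gamma|$ therefore makes $M$ block lower-triangular with diagonal blocks of the form $\mathrm{diag}(\gamma!)$, so $M$ is invertible.

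Items two and three are then short. For item two, $q_{\alpha_1}q_{\alpha_2}\in\cP_{[\alpha_1]+[\alpha_2]}$ by the homogeneity in item one, and $(q_{\alpha})_{[\alpha]=[\alpha_1]+[\alpha_2]}$ is a basis of that space. For item three, the group law $G\times G\to G$ is polynomial in exponential coordinates (a finite Baker--Campbell--Hausdorff series, $G$ being nilpotent) and, the dilations being automorphisms, $(r\cdot x)(r\cdot y)=r\cdot(xy)$; hence $(x,y)\mapsto q_{\alpha}(xy)$ is a polynomial on $\bR^n\times\bR^n$ homogeneous of degree $[\alpha]$ for the dilations $(x,y)\mapsto(r\cdot x,r\cdot y)$. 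The space of such polynomials is $\bigoplus_{e=0}^{[\alpha]}\cP_e(x)\otimes\cP_{[\alpha]-e}(y)$, and $\{\,q_{\alpha_1}(x)q_{\alpha_2}(y):[\alpha_1]+[\alpha_2]=[\alpha]\,\}$ is a basis of it by item one; this produces the asserted expansion with real coefficients $c_{\alpha_1,\alpha_2}$. Finally, setting $y=0$ and using $q_{\gamma}(0)=\delta_{\gamma,0}$ leaves $q_{\alpha}(x)=\sum_{[\alpha_1]=[\alpha]}c_{\alpha_1,0}\,q_{\alpha_1}(x)$, so $c_{\alpha_1,0}=\delta_{\alpha_1,\alpha}$ by the basis property; setting $x=0$ gives $c_{0,\alpha_2}=\delta_{\alpha_2,\alpha}$ in the same way.
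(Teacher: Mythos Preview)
Your proof is correct and follows the same route as the paper's: dual basis for item one, homogeneity for item two, and the Baker--Campbell--Hausdorff formula plus homogeneity for item three. The paper's proof is a three-line sketch that simply asserts $(q_\alpha)_{[\alpha]=d}$ is the dual basis of $(X^\beta)_{[\beta]=d}$, whereas you supply the missing non-degeneracy argument (block lower-triangularity of the pairing matrix via the structure of the $X_j$ in exponential coordinates), which is a genuine and worthwhile addition.
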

\begin{proof}
Clearly $(q_\alpha)_{[\alpha]=d}$ is the dual basis of  $(X^\beta)_{[\beta]=d}$.
 The first point follows.
 The second point is a direct consequence of the first together with the homogeneity of $q_{\alpha_1}q_{\alpha_2}$.
 The third point is a consequence of the homogeneity in $x$ and in $y$ and of 
the Baker-Campbell-Hausdorff formula. 
\end{proof}

\subsection{The unitary dual and the group Fourier transform}
\label{SEC:udft}

We denote by $\Gh$ the unitary dual of the group $G$, that is, 
the set of (strongly continuous) unitary irreducible representations modulo unitary equivalence. 
We will often identify a  unitary irreducible representation $\pi$ of $G$ and its equivalence class;
we denote the representation Hilbert space by $\cH_\pi$
and the subspace of smooth vectors by $\cH_\pi^\infty$.

The group Fourier transform of a function $f\in L^1(G)$ 
at $\pi\in \Gh$ is the bounded operator $\wh f(\pi)$
(sometimes this will be also denoted by $\pi(f)$ for longer expressions) on $\cH_\pi$ given by
$$
(\wh f(\pi) v_1, v_2)_{\cH_\pi} := \int_G f(x) (\pi(x)^* v_1, v_2)_{\cH_\pi} dx
,\quad v_1,v_2\in \cH_\pi .
$$
One can readily see the equality $\wh{f_1*f_2}(\pi)=\wh f_2(\pi) \wh f_1(\pi)$.

The group Fourier transform of a vector $X\in \fg$ 
at $\pi\in \Gh$ is the operator $\pi(X)$ on $\cH_\pi^\infty$ given by
$$
(\pi(X) v_1,v_2)_{\cH_{\pi}} := \partial_{s=0} (\pi(e^{sX} )v_1,v_2)_{\cH_\pi}
,\quad v_1,v_2\in \cH_\pi^\infty .
$$
Setting $\pi(X^\alpha)=\pi(X)^\alpha$,
this yields the definition of the group Fourier transform of any element of $\fU(\fg)$. 
With this notation we have for any $\alpha\in \bN_0^n$,
$$
\wh{X^\alpha f}(\pi)=\pi(X)^\alpha \wh f(\pi)=\pi(X^\alpha)\wh f(\pi)
\ \mbox{and}\ 
\wh{\tilde X^\alpha f}(\pi)=\wh f(\pi)\pi(X)^\alpha =\wh f(\pi)\pi(X^\alpha )
\, ,
$$
with the convention for $\alpha=0$ that $\pi(X^0)=\pi(I)=I =\pi(X)^0$.

\medskip

The  properties above help in the systematic computations of certain expressions; for example 
we see $\pi( \{Xf_1\} *f_2)=\pi(f_2)\pi(X)\pi(f_1)=\pi(\tilde X f_2)\pi(f_1)$
and this is coherent with the direct and more tedious computation $\{Xf_1\} *f_2=f_1 * \{\tilde X f_2\}$.

\subsection{A positive Rockland operator $\cR$}
\label{SEC:RO}

We choose $\cR$  a positive (left) Rockland operator of homogeneous degree $\nu$.
Let us recall that being a Rockland operator means that  
$\cR$ is a differential operator on $G$ 
which is left-invariant and homogeneous of degree $\nu$
and such that for every non-trivial irreducible representation $\pi$ of $G$, 
the operator
$\pi(\cR)$ is injective on smooth vectors
(see Section
\ref{SEC:udft} for the definition of $\pi(\cR)$);
being positive means
$$
\forall f\in \cS(G) \qquad
(\cR f,f)_{L^2(G)} \geq 0 
 .
$$
Here as usual
$$
(f_1,f_2)_{L^2(G)} =\int_G f_1(x) \overline{f_2(x)} dx .
$$

In the stratified case, we choose $\cR= - \cL$ 
where $\cL$ is the sub-Laplacian 
$\sum_{i=1}^{n_1} X_i^2$ (and so $\nu=2$).
In the graded case, 
if $\nu_o$ is any common multiple of the weights 
$\upsilon_1,\ldots, \upsilon_n$,
then 
\begin{equation}
\label{ex_RO}
\sum_{j=1}^n
(-1)^{\frac{\nu_o}{\upsilon_j}}
 c_j X_j^{2\frac{\nu_o}{\upsilon_j}}
\quad\mbox{and}\quad
\sum_{j=1}^n
c_j X_j^{4\frac{\nu_o}{\upsilon_j}}
\quad\left(\mbox{with}\ c_j>0\right),
\end{equation}
 are positive Rockland operators of degree $2\nu_o$ and $4\nu_o$ respectively.

By the celebrated result of Helffer and Nourrigat \cite{helffer+nourrigat-79}, 
any  Rockland operator  is hypoelliptic and satisfies subelliptic estimates. 
Furthermore, by \cite[ch. 4.B]{folland+stein_bk82},
any positive Rockland operator $\cR$,
as a differential operator defined on $\cD(G)$, 
admits   an essentially self-adjoint extension on $L^2(G)$
for which we keep the same notation $\cR$.
Let $E$ denote its spectral measure.
For any measurable function $\phi$ on  $[0,\infty)$, 
we define the operator
$$
\phi(\cR) := \int_0^\infty \phi(\lambda) dE_\lambda
,
$$
which is invariant under left-translation. 
If it maps continuously $\cS(G)\rightarrow \cS'(G)$ (for example if $\phi$ is bounded),
by the Schwartz kernel theorem, it is a convolution operator 
with kernel $\phi(\cR)\delta_o \in \cS'(G)$, that is,
$$
\phi(\cR) f = f \ * \ \phi(\cR)\delta_o ,\quad f\in \cS(G)
.
$$
Recall that the group convolution  is defined via
$$
f_1*f_2(g) =\int_G f_1(g') f_2({g'}^{-1} g)  dg'
 , \quad f_1,f_2\in \cS(G)
.
$$

The above hypotheses ensure the following Marcinkiewicz-type properties
proved  by A. Hulanicki \cite{hula-1984}.
 \begin{proposition}[Hulanicki]
 \label{prop_hula}
For any $\alpha,\beta \in \bN_0^n$,
there exists $k=k_{\alpha,\beta}\in \bN_0$ and $C=C_{\alpha,\beta}>0$
such that for any $\phi\in C^\infty([0,\infty))$ we have
$$
\|x^\alpha X^\beta \phi(\cR)\delta_o\|_{L^1(G)} \leq C
\sup_{\lambda\geq 0,\, k_1\leq k}
 (1+\lambda)^k \partial_\lambda^{k_1} \left|\phi(\lambda)\right| 
.
$$
By this we mean that if the supremum in the right hand side is finite, then 
the distribution $x^\alpha X^\beta \phi(\cR)\delta_o$ coincides with an integrable function and
the  inequality holds.
\end{proposition}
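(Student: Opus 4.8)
The plan is to upgrade the statement to the following: under the (otherwise vacuous) hypothesis that $N_k(\phi):=\sup_{\lambda\ge 0,\ k_1\le k}(1+\lambda)^k|\partial_\lambda^{k_1}\phi(\lambda)|$ (the quantity on the right-hand side of the asserted inequality) is finite, the distribution $\phi(\cR)\delta_o$ is a Schwartz function whose seminorms are controlled by $N_k(\phi)$ once $k=k_{\alpha,\beta}$ is large enough. The $L^1$-bound then follows from Cauchy--Schwarz, $\|w\|_{L^1(G)}\le\|(1+|x|)^{-s}\|_{L^2(G)}\,\|(1+|x|)^s w\|_{L^2(G)}$ with $s>Q/2$ (so that $(1+|x|)^{-s}\in L^2$, the exponent being the homogeneous dimension $Q$), together with the fact that $(1+|x|)^s$ is dominated by a finite sum $\sum_{\gamma\in F}|x^\gamma|$ of coordinate monomials; hence it suffices to bound $\|x^\gamma X^\beta\phi(\cR)\delta_o\|_{L^2(G)}$ for finitely many $\gamma$. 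Two devices are used throughout: a dyadic decomposition in the spectral parameter $\lambda$ combined with the homogeneity of $\cR$; and the heat kernel $h_t:=e^{-t\cR}\delta_o$, which by \cite[Ch.~4.B]{folland+stein_bk82} and the hypoellipticity of Helffer--Nourrigat \cite{helffer+nourrigat-79} is a Schwartz function for every $t>0$, with $h_t(x)=t^{-Q/\nu}h_1(t^{-1/\nu}\cdot x)$.

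First I would carry out the dyadic reduction. Fix $\chi\in C_c^\infty((0,\infty))$ supported in $[1/2,2]$ and $\chi_0\in C_c^\infty([0,1])$ with $1=\chi_0(\lambda)+\sum_{j\ge 1}\chi(2^{-j}\lambda)$ on $[0,\infty)$, and split $\phi=\phi\chi_0+\sum_{j\ge 1}(\phi\chi_j)$ with $\chi_j:=\chi(2^{-j}\cdot)$. The low-frequency part $\phi\chi_0$ has compact support in $[0,1]$ and falls under the localized estimate below, with $\sup_{m\le M}\|\partial_\lambda^m(\phi\chi_0)\|_{L^\infty}\le C\,N_M(\phi)$. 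For $j\ge 1$ write $(\phi\chi_j)(\lambda)=\eta_j(2^{-j}\lambda)$ with $\eta_j(\mu):=\phi(2^j\mu)\chi(\mu)$ supported in $[1/2,2]$; a Leibniz computation gives $\sup_{m\le M}\|\partial_\mu^m\eta_j\|_{L^\infty}\le C\,2^{j(M-k)}N_k(\phi)$ whenever $k\ge M$. Since $\cR$ is homogeneous of degree $\nu$, the kernel of $(\phi\chi_j)(\cR)$ is $2^{jQ/\nu}(\eta_j(\cR)\delta_o)(2^{j/\nu}\cdot)$, whence $\|x^\alpha X^\beta (\phi\chi_j)(\cR)\delta_o\|_{L^1(G)}=2^{j([\beta]-[\alpha])/\nu}\,\|x^\alpha X^\beta\eta_j(\cR)\delta_o\|_{L^1(G)}$. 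Thus everything reduces to the \emph{localized estimate}: for each $\alpha,\beta$ there is $M=M(\alpha,\beta)$ such that $\|x^\alpha X^\beta\eta(\cR)\delta_o\|_{L^1(G)}\le C\,\sup_{m\le M}\|\partial_\mu^m\eta\|_{L^\infty}$ for every $\eta$ supported in $[1/2,2]$; granting this one picks $k=k_{\alpha,\beta}$ with $M-k+([\beta]-[\alpha])/\nu<0$ and sums the geometric series over $j$.

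For the localized estimate, write $\eta(\cR)\delta_o=g(\cR)h_1$ where $g(\lambda):=e^\lambda\eta(\lambda)$ is still supported in $[1/2,2]$ with comparable $C^M$-norms, and $g(\cR)$ is bounded on $L^2(G)$ with norm $\|g\|_{L^\infty}$ by the spectral theorem. Using the weighted-$L^2$ reduction above it suffices to bound $\|x^\gamma X^\beta\eta(\cR)\delta_o\|_{L^2(G)}$. By the subelliptic estimates for Rockland operators in operator form (the map $X^\beta(I+\cR)^{-[\beta]/\nu}$ extends to a bounded left-invariant operator on $L^2(G)$) one absorbs $X^\beta$ into the calculus, writing $X^\beta\eta(\cR)=\big(X^\beta(I+\cR)^{-[\beta]/\nu}\big)\,\eta_1(\cR)$ with $\eta_1(\lambda):=(1+\lambda)^{[\beta]/\nu}\eta(\lambda)$ of the same support and comparable $C^M$-norms; the case $\gamma=0$ then follows from $\|\eta(\cR)\delta_o\|_{L^2}\le\|g\|_{L^\infty}\|h_1\|_{L^2}\le C\,\|\eta\|_{L^\infty}$. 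For $\gamma\ne 0$ the polynomial weight $x^\gamma$ does not commute with $g(\cR)$, and the idea is to exploit that $[\cR,x_j]$ is a left-invariant differential operator homogeneous of degree $\nu-\upsilon_j<\nu$: iterating this, via a Taylor-type expansion inside the functional calculus with a finite-order remainder, one writes $x^\gamma\eta(\cR)\delta_o$ as a finite sum $\sum_\ell Y_\ell\big(\eta_\ell(\cR)\delta_o\big)$ plus a controlled remainder, where $Y_\ell\in\sU(\fg)$ and each $\eta_\ell$ is obtained from $\eta$ by finitely many $\lambda$-derivatives times smooth functions supported in $[1/2,2]$ with $C^M$-norm controlled by that of $\eta$; one then closes by induction on $[\gamma]$, applying the $\gamma=0$ analysis and the $X^\beta$-absorption to each term.

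The main obstacle is precisely this commutator analysis --- it is essentially Hulanicki's theorem. One cannot simply sum the full infinite commutator series, since the derivatives $X^\alpha h_1$ of the heat kernel grow with $|\alpha|$ (the heat kernel is Schwartz but in general not analytic); the expansion must be truncated at a finite order $N=N(\gamma)$ and the $N$-th remainder estimated on its own, and one must check that the auxiliary one-variable symbols $\eta_\ell$ produced at each step of the induction remain in the required class with controlled norms. In the stratified case $\cR=-\cL$ one may alternatively invoke Gaussian upper bounds and finite propagation speed for $e^{-t\cL}$, but the commutator route is the one that survives in the general graded setting. Everything else --- the dyadic bookkeeping, the compactly supported low-frequency piece, the weighted-$L^2$ and Young estimates, and the absorption of $X^\beta$ --- is routine given Proposition \ref{prop_Taylor}, the subelliptic estimates, and the Schwartz property of the heat kernel recalled above.
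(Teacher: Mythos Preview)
The paper does not prove this proposition: it is stated as a result of Hulanicki \cite{hula-1984}, and the only remark about its proof is the one-line comment that ``the proof of Proposition \ref{prop_hula} relies on using the $\cR$-heat kernel $h_t$''. So there is no in-paper argument to compare your proposal against.

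That said, your outline is broadly the right one and is consistent with the heat-kernel approach the paper alludes to (and with Hulanicki's original argument). The dyadic decomposition plus homogeneity rescaling is the standard reduction to compactly supported multipliers; the factorisation $\eta(\cR)\delta_o=g(\cR)h_1$ with $g=e^{\lambda}\eta$ is exactly how the Schwartz heat kernel enters; and the absorption of $X^\beta$ via $X^\beta(\id+\cR)^{-[\beta]/\nu}$ is correct. Your own diagnosis is accurate: the substantive content is the commutator step transferring the polynomial weight $x^\gamma$ across $g(\cR)$, and you have only sketched the induction without carrying out the finite-order expansion and remainder estimate. This is not a flaw in strategy --- it is precisely the work done in \cite{hula-1984} --- but as written your proposal stops short of a proof at the one genuinely nontrivial point. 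If you want a self-contained argument you must make the commutator expansion explicit: write $[g(\cR),x_j]$ via the Helffer--Sj\"ostrand or Dynkin formula (or directly via the resolvent identity) to express it through $\partial_\lambda g$ and the lower-order operator $[\cR,x_j]$, then iterate and check that the truncation remainder lands back in the class of terms already controlled; otherwise, citing \cite{hula-1984} as the paper does is the honest route.
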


\begin{remark}\label{rem_cq_hula}
Consequently if $\phi\in \cS(\bR)$ is Schwartz, then the kernel $\phi(\cR)\delta_o$ is also Schwartz on $G$, i.e. $\phi(\cR)\delta_o\in \cS(G)$. 
\end{remark}

The proof of Proposition \ref{prop_hula}
relies on using the   $\cR$-\emph{heat kernel} $h_t$,
defined as the kernel of 
$\exp (-t\cR)$ for each $t>0$.
In \cite{folland+stein_bk82}, it is proved that 
the function $h=h_1$ is Schwartz and that
$$
h_t(x) =t^{-\frac Q \nu} h(t^{-\frac 1\nu} x)
.
$$

As in the Euclidean or stratified cases (see \cite{folland-1975}),
we can define \emph{Bessel potentials} associated with a positive Rockland operator $\cR$ of degree $\nu$
via the integral 
$$
\cB_a (x)= \frac 1{\Gamma(\frac a \nu)}
\int_0^\infty t^{\frac a \nu  -1 } e^{-t} h_t(x) dt
.
$$
Indeed for $a\in \bC$ with $\Rep a>0$,
this integral converges absolutely for $x\not=0$ and defines the Bessel potential
$\cB_a\in C^\infty(G\backslash \{0\})$ which satisfies:
$$
\|\cB_a\|_{L^1(G)} \leq \frac{\Gamma (\Rep \frac a \nu)}{|\Gamma (\frac a \nu)|} \|h\|_{L^1}<\infty
 , \quad a\in \bC, \ \Rep a>0
.
$$
Using the properties of semigroup of $e^{-t\cR}$,
one obtains that 
$\|\cB_a\|_{L^2(G)}$ is square integrable if $\Rep a >Q/2$.
The Bessel potential is the convolution kernel 
of the $L^2(G)$-bounded left-invariant operator $(\id+\cR)^{-a/\nu}$
and 
of the $L^2(G)$-bounded right-invariant operator $(\id+\tilde\cR)^{-a/\nu}$, so that we have
$$
(\id+\cR)^{-a/\nu} f= f* \cB_a ,\quad
(\id+\tilde\cR)^{-a/\nu} f= \cB_a*f
,\quad f\in L^2(G) .
$$

\subsection{Sobolev spaces}
\label{subsec_sobolev_spaces}

For $a\geq 0$ and $\cR$ a positive Rockland operator,
we define  the $\cR$-Sobolev spaces as the domain of $(\id+\cR)^{\frac a\nu}$ , that is,
$$
L^2_a(G) = \{ f\in L^2(G) , \ (\id+\cR)^{\frac a\nu} f\in L^2(G) \} 
.
$$
For $a<0$, $L^2_a(G)$ is the completion of $L^2(G)$ for the norm 
$f\mapsto \|(\id+\cR)^{\frac a\nu} f\|_{L^2(G)}$.
It is easy to see that for any $a\in \bR$, the Sobolev space $L^2_a(G)$ is a Hilbert space for the norm
$$
\|f\|_{L^2_a(G)}:= \|(\id+\cR)^{\frac a\nu} f\|_{L^2(G)}
.
$$
Adapting the stratified case \cite{folland-1975} 
(see \cite{FR}), 
one obtains:
\begin{proposition}[Sobolev spaces]
\label{prop_sobolev_spaces}
Let $\cR$ be a  positive Rockland operator of homogeneous degree $\nu_\cR$.

\begin{enumerate}
\item 
If $a\leq b$, then
$\cS(G)\subset L^2_b (G)\subset L^2_a(G) \subset \cS'(G)$
and an equivalent norm for $L^2_b(G)$ is
$f\mapsto \|f\|_{L^2_a(G)} + \|\cR ^{\frac {b-a} \nu} f\|_{L^2_a(G)}$.
\item 
If $a\in \nu_\cR\bN_0$, then an equivalent norm is given by
$f\mapsto\sum_{[\alpha]\leq a} \|X^\alpha f\|_{L^2(G)}$.
\item 
The dual space of $L^2_a(G)$ is isomorphic to $L^2_{-a}(G)$ via
the bilinear form $(f_1,f_2) \mapsto \int_G f_1 f_2 dg$. 
\item 
We have the usual property of interpolation for Sobolev spaces:
let $T$ be a linear mapping from $L^2_{a_0}(G) + L^2_{a_1}(G)$
to locally integrable functions on $G$;
we assume that $T$ maps $L^2_{a_0}(G)$ and $L^2_{a_1}(G)$
boundedly into $L^2_{b_0}(G)$ and $L^2_{b_1}(G)$, respectively.
Then $T$ extends uniquely to a bounded mapping from 
$L^2_{a_t}(G)$ to $L^2_{b_t}(G)$
with $(a_t,b_t) = t(a_0,b_0)+(1-t) (a_1,b_1)$.
\end{enumerate}
\end{proposition}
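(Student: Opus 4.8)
The plan is to reduce every statement to two elementary facts about $\cR$: first, that any bounded Borel function $m$ of the self-adjoint operator $\cR$ yields an operator $m(\cR)$ bounded on $L^2(G)$ (spectral theorem), and second, that for $\Rep a>0$ the operators $(\id+\cR)^{-a/\nu}$ and $(\id+\tilde\cR)^{-a/\nu}$ act by convolution against the $L^1(G)$-kernel $\cB_a$, with $\|\cB_a\|_{L^1(G)}$ controlled as in Section \ref{SEC:RO}. Throughout, the argument is modelled on the stratified case of \cite{folland-1975}, with the full details in \cite{FR}.

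For Part (1) I would first establish the chain of inclusions. For $a\le b$ and $f$ in a dense subspace of $L^2_b(G)$, writing $(\id+\cR)^{a/\nu}f=\big((\id+\cR)^{b/\nu}f\big)*\cB_{b-a}$ (when $b>a$) and applying Young's inequality gives $\|f\|_{L^2_a(G)}\le\|\cB_{b-a}\|_{L^1(G)}\|f\|_{L^2_b(G)}$, hence $L^2_b(G)\hookrightarrow L^2_a(G)$. The inclusion $\cS(G)\subset L^2_b(G)$ follows by splitting $b/\nu=m+s$ with $m\in\bN_0$, $s\in[0,1)$, using that $(\id+\cR)^{m+1}$ maps $\cS(G)$ to $\cS(G)$ and that $(\id+\cR)^{s-1}$ is convolution by $\cB_{\nu(1-s)}\in L^1(G)$; the inclusion $L^2_a(G)\subset\cS'(G)$ is obtained by pairing against $\cS(G)$ (equivalently, by applying a differential operator $(\id+\cR)^{\lceil a/\nu\rceil}$ to an $L^1$-convolution of an $L^2$ function). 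For the equivalent norm, setting $g=(\id+\cR)^{a/\nu}f$ and $c=b-a\ge0$ reduces the claim to $\|g\|_{L^2(G)}+\|\cR^{c/\nu}g\|_{L^2(G)}\simeq\|(\id+\cR)^{c/\nu}g\|_{L^2(G)}$, which follows from the pointwise comparison $1+\lambda^{c/\nu}\simeq(1+\lambda)^{c/\nu}$ on $[0,\infty)$ together with the $L^2$-boundedness of $m(\cR)$ for the bounded multipliers $m(\lambda)=\lambda^{c/\nu}(1+\lambda)^{-c/\nu}$ and $m(\lambda)=(1+\lambda)^{c/\nu}(1+\lambda^{c/\nu})^{-1}$.

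For Part (2) with $a=\nu_\cR k$, $k\in\bN_0$ (so $(\id+\cR)^{a/\nu}=(\id+\cR)^k$ is a differential operator), one inequality is immediate: expanding $(\id+\cR)^k=\sum_{j\le k}\binom{k}{j}\cR^j$ and noting that each $\cR^j$ is a left-invariant operator homogeneous of degree $\nu_\cR j\le a$, hence a linear combination of the $X^\alpha$ with $[\alpha]=\nu_\cR j$, gives $\|f\|_{L^2_a(G)}\lesssim\sum_{[\alpha]\le a}\|X^\alpha f\|_{L^2(G)}$. The reverse inequality is the genuinely hard point and is where the Rockland hypothesis enters: it is the subelliptic estimate $\|X^\alpha f\|_{L^2(G)}\lesssim\|f\|_{L^2_{[\alpha]}(G)}$ for $[\alpha]\le a$, which I would deduce from the Helffer--Nourrigat maximal hypoellipticity theorem \cite{helffer+nourrigat-79}, or — when $[\alpha]<a$ — directly from the heat-kernel bound $\|X^\alpha h_t\|_{L^1(G)}\le Ct^{-[\alpha]/\nu_\cR}$, which makes $X^\alpha\cB_a$ integrable; the borderline cases $[\alpha]=a$ are then recovered by complex interpolation between consecutive integer levels, using again the $L^2$-boundedness of the imaginary powers $(\id+\cR)^{it}$. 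This reverse inequality is the main obstacle, being the only part that cannot be handled by convolution kernels alone.

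Parts (3) and (4) are then formal. For (3), since $\cR$ has real coefficients it is symmetric for the bilinear pairing $\langle f_1,f_2\rangle=\int_G f_1f_2$, so $(\id+\cR)^{a/\nu}\colon L^2_a(G)\to L^2(G)$ is a bijective isometry whose transpose identifies the dual of $L^2_a(G)$ with $L^2_{-a}(G)$ via $\langle\cdot,\cdot\rangle$, the density of $\cS(G)$ making the pairing well defined on both sides. For (4), I would apply Stein's complex interpolation to the analytic family $T_z=(\id+\cR)^{b_z/\nu}T(\id+\cR)^{-a_z/\nu}$ on the strip $0\le\Rep z\le1$, where $z\mapsto(a_z,b_z)$ is the affine path with endpoints $(a_0,b_0)$ and $(a_1,b_1)$; on the two boundary lines $T_z$ is bounded on $L^2(G)$ because the imaginary powers $(\id+\cR)^{it}$ are $L^2$-bounded and $T$ maps $L^2_{a_j}(G)\to L^2_{b_j}(G)$, and the conclusion at $z=t$ is exactly the asserted boundedness $T\colon L^2_{a_t}(G)\to L^2_{b_t}(G)$.
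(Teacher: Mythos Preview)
The paper does not actually prove this proposition: it is introduced by the sentence ``Adapting the stratified case \cite{folland-1975} (see \cite{FR}), one obtains'' and then stated without argument. Your sketch is a correct and fairly complete implementation of precisely that adaptation --- spectral calculus for the bounded multipliers, the $L^1$-Bessel kernels $\cB_a$ for the fractional pieces, Helffer--Nourrigat for the hard direction of Part~(2), and Stein complex interpolation for Part~(4) --- so there is nothing in the paper to compare against beyond the references you already invoke.

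One small remark on Part~(2): your parenthetical suggestion of recovering the endpoint $[\alpha]=a$ by ``complex interpolation between consecutive integer levels'' is not quite a self-contained argument (interpolation needs a second bound, and the heat-kernel estimate only gives the subcritical range $[\alpha]<a$). But you have already correctly identified the right tool --- the Helffer--Nourrigat maximal hypoellipticity --- as the primary route, and that suffices by itself; the interpolation comment can simply be dropped.
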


Consequently, the Sobolev spaces do not depend on the choice of operators $\cR$ as in the statement above. Such operators always exist (see \eqref{ex_RO}) and we fix one of them until the end of the paper.

From the interpolation property of Sobolev spaces
(cf Proposition \ref{prop_sobolev_spaces}), we have:
\begin{lemma}
\label{lem_cq_interpolation}
Let $\kappa \in \cS'(G)$ and $a\in \bR$.
Let $\{\gamma_n,\, n\in \bZ\}$
be a sequence of real numbers 
which tends to $\pm \infty$ as $n\to\pm \infty$.
Assume that for any $n\in \bZ$,
the operator $T_\kappa$  extends continuously 
to a bounded operator $L^2_{\gamma_n} (G)\rightarrow L^2_{a+\gamma_n}(G)$. 
Then the operator $T_\kappa$  extends continuously 
to a bounded operator $L^2_\gamma (G)\rightarrow L^2_{a+\gamma}(G)$ 
 for any $\gamma\in \bR$.
\end{lemma}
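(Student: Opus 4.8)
The plan is to deduce the full-scale boundedness from the boundedness on the discrete ladder of Sobolev exponents $\gamma_n$ by interpolating, using Proposition \ref{prop_sobolev_spaces}(4). First I would fix an arbitrary $\gamma\in\bR$. Since $\gamma_n\to+\infty$ and $\gamma_n\to-\infty$ as $n\to\pm\infty$, there exist indices $m,\ell\in\bZ$ with $\gamma_m\le \gamma\le \gamma_\ell$. Thus we may choose $t\in[0,1]$ so that $\gamma = t\gamma_m + (1-t)\gamma_\ell$, and then automatically $a+\gamma = t(a+\gamma_m)+(1-t)(a+\gamma_\ell)$, so the pair $(\gamma, a+\gamma)$ lies on the segment joining $(\gamma_m, a+\gamma_m)$ and $(\gamma_\ell, a+\gamma_\ell)$.

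Next I would apply the interpolation statement with $T=T_\kappa$, $a_0=\gamma_m$, $b_0=a+\gamma_m$, $a_1=\gamma_\ell$, $b_1=a+\gamma_\ell$. By hypothesis $T_\kappa$ extends to a bounded operator $L^2_{\gamma_m}(G)\to L^2_{a+\gamma_m}(G)$ and $L^2_{\gamma_\ell}(G)\to L^2_{a+\gamma_\ell}(G)$; moreover $T_\kappa=\kappa*(\cdot)$ (or rather the operator with Schwartz kernel built from $\kappa$) maps $L^2_{\gamma_m}(G)+L^2_{\gamma_\ell}(G)$ into locally integrable functions, which is the admissibility hypothesis needed to invoke Proposition \ref{prop_sobolev_spaces}(4) --- here one uses that, say, $L^2_{\gamma_m}(G)+L^2_{\gamma_\ell}(G)\subset \cS'(G)$ and that convolution with $\kappa\in\cS'(G)$ against elements of these spaces produces distributions that are in particular locally integrable after the stated mappings, or one simply notes the sum space embeds into $L^2_{\min(\gamma_m,\gamma_\ell)}(G)\subset \cS'(G)$. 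The interpolation proposition then yields a unique bounded extension $L^2_{\gamma_t}(G)\to L^2_{b_t}(G)$ with $(\gamma_t,b_t)=t(\gamma_m,a+\gamma_m)+(1-t)(\gamma_\ell,a+\gamma_\ell)=(\gamma,a+\gamma)$, which is exactly the desired conclusion for this $\gamma$.

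Finally, since $\gamma\in\bR$ was arbitrary, the operator $T_\kappa$ extends continuously to a bounded operator $L^2_\gamma(G)\to L^2_{a+\gamma}(G)$ for every real $\gamma$. One should also remark that the various extensions are consistent with each other and with $T_\kappa$ on $\cS(G)$, because $\cS(G)$ is dense in every $L^2_\gamma(G)$ by Proposition \ref{prop_sobolev_spaces}(1) and on $\cS(G)$ all the extensions agree with the original operator $\kappa*(\cdot)$; uniqueness in Proposition \ref{prop_sobolev_spaces}(4) then forces the extensions to coincide on overlapping domains.

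The only delicate point I anticipate is the admissibility hypothesis in the interpolation proposition: one must check that $T_\kappa$ genuinely sends $L^2_{\gamma_m}(G)+L^2_{\gamma_\ell}(G)$ into locally integrable functions, rather than merely into distributions. This is not automatic for a general $\kappa\in\cS'(G)$, but it follows from the hypothesis that $T_\kappa$ already maps into honest Sobolev spaces $L^2_{a+\gamma_n}(G)$ for the ladder values, combined with the nesting $L^2_{\gamma_m}(G)+L^2_{\gamma_\ell}(G)=L^2_{\min(\gamma_m,\gamma_\ell)}(G)$ (the larger exponent space sits inside the smaller-exponent one) --- so in fact one only needs the single endpoint pair $(\min(\gamma_m,\gamma_\ell), a+\min(\gamma_m,\gamma_\ell))$ for the source-space identification, and then the two actual interpolation endpoints $(\gamma_m, a+\gamma_m)$, $(\gamma_\ell, a+\gamma_\ell)$ for the bounded-mapping hypotheses. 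Everything else is a routine bookkeeping of convex combinations of exponents.
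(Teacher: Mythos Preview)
Your approach is correct and is exactly what the paper intends: the lemma is stated immediately after Proposition~\ref{prop_sobolev_spaces} with the remark ``From the interpolation property of Sobolev spaces (cf Proposition~\ref{prop_sobolev_spaces}), we have:'' and no further proof, so your argument is the natural unpacking of that one line. One small slip: in this paper $T_\kappa f = f*\kappa$ (right convolution), not $\kappa*f$; this does not affect the interpolation argument.
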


As in the Euclidean and stratified cases \cite{folland-1975}, we can prove the following Sobolev inequalities:

\begin{lemma}[Sobolev inequality]
\label{lem_sob_ineq}
If $a>Q/2$ then any function $f\in L^2_a(G)$ admits a continuous bounded representative which 
satisfies
$$
\|f\|_{L^\infty(G)} \leq C_a \|f\|_{L^2_a(G)}
,
$$
with $C_a=\|\cB_a\|_{L^2(G)}$ independent of $f$.
\end{lemma}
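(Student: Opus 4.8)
The plan is to prove the Sobolev embedding $L^2_a(G)\hookrightarrow L^\infty(G)$ for $a>Q/2$ by reducing it, via the Bessel potential $\cB_a$ introduced in Section~\ref{SEC:RO}, to a statement about convolution with an $L^2$-kernel. The key observation recorded just before the statement is that $(\id+\cR)^{-a/\nu}$ is convolution on the right by $\cB_a$, and that $\cB_a\in L^2(G)$ precisely when $\Rep a>Q/2$.

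First I would take $f\in L^2_a(G)$; by definition of the Sobolev space this means $g:=(\id+\cR)^{a/\nu}f\in L^2(G)$, with $\|g\|_{L^2(G)}=\|f\|_{L^2_a(G)}$. Applying $(\id+\cR)^{-a/\nu}$ (which is bounded on $L^2$ and inverts $(\id+\cR)^{a/\nu}$ on the relevant domain) gives the representation
\begin{equation}
f = (\id+\cR)^{-a/\nu}g = g*\cB_a .
\end{equation}
Since $a>Q/2$ we have $\cB_a\in L^2(G)$ with $\|\cB_a\|_{L^2(G)}=C_a<\infty$ as stated in Section~\ref{SEC:RO}. Now for any $x\in G$, writing the convolution explicitly as $f(x)=\int_G g(y)\,\cB_a(y^{-1}x)\,dy$ and applying the Cauchy--Schwarz inequality together with the left-invariance of the Haar measure (so that $\int_G|\cB_a(y^{-1}x)|^2\,dy=\|\cB_a\|_{L^2(G)}^2$ for every fixed $x$), one obtains
\begin{equation}
|f(x)| \le \|g\|_{L^2(G)}\,\|\cB_a\|_{L^2(G)} = C_a\,\|f\|_{L^2_a(G)},
\end{equation}
uniformly in $x$, which is the claimed inequality. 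Continuity and boundedness of the representative then follow from the standard fact that convolution of an $L^2$ function with an $L^2$ function is continuous (and bounded), so $g*\cB_a$ has a continuous bounded representative; one should note that $f$ a priori only lies in $\cS'(G)$, so strictly this identifies the distribution $f$ with the continuous function $g*\cB_a$.

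I expect the only real subtlety to be the density/approximation argument needed to justify the pointwise convolution formula and the identity $f=g*\cB_a$ when $f$ is merely a Sobolev distribution rather than, say, a Schwartz function: one proves everything first for $f\in\cS(G)$, where all manipulations are legitimate, and then passes to the limit using the already-established bound $\|f\|_{L^\infty}\le C_a\|f\|_{L^2_a}$ on the dense subspace $\cS(G)\subset L^2_a(G)$ (density being part of Proposition~\ref{prop_sobolev_spaces}). Everything else is routine, and the proof is essentially identical to the Euclidean and stratified arguments in \cite{folland-1975}.
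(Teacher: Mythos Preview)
Your proof is correct and follows exactly the approach sketched in the paper: write $f=(\id+\cR)^{-a/\nu}(\id+\cR)^{a/\nu}f=\{(\id+\cR)^{a/\nu}f\}*\cB_a$ and apply Cauchy--Schwarz together with $\cB_a\in L^2(G)$ for $a>Q/2$. You have simply filled in the details (Cauchy--Schwarz, continuity of $L^2*L^2$, density) that the paper's sketch leaves implicit.
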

\begin{proof}[Sketch of the proof]
It suffices to write
$$
f=(\id+\cR)^{-\frac a\nu}  (\id+\cR)^{\frac a\nu}  f = \{(\id+\cR)^{\frac a\nu}  f\} * \cB_a
.
$$
\end{proof}

\subsection{The Plancherel Theorem and the von Neumann algebras 
$\sL_L(L^2(G))$, $\cK(G)$ and $L^\infty(\widehat G)$}

About representation theory and the Plancherel theorem,
we refer the reader to Dixmier's standard textbook \cite{dixmier_bk1969}, especially \S 18.8.

Recall that a bounded operator $A$ on a Hilbert space $\cH$ is in the Hilbert-Schmidt class whenever $\| A\|_{HS} =\sqrt{\tr \left(A^* A\right)}$ is finite.
If $f\in L^2(G) \cap L^1(G)$ then $\wh{f}(\pi)$ is a Hilbert-Schmidt operator, 
and the \emph{Plancherel formula} holds,
$$
\int_G |f(g)|^2 dg = \int_{\Gh} \| \wh f(\pi)\|_{HS}^2 d\mu(\pi)
,
$$
where $\mu$ is the Plancherel measure on $\Gh$.
The group Fourier transform extends unitarily to $L^2(G)$
and  
a square integrable function $f\in L^2(G)$ gives rise to a $\mu$-square-integrable field of Hilbert-Schmidt operators $\{\wh{f}(\pi)\}$. 
Conversely, a $\mu$-square-integrable field of Hilbert-Schmidt operators 
$\{\sigma_\pi\}$ defines a square integrable function $f$ with
$$
(f,f_1)_{L^2(G)}=\int_{\Gh} \tr \left( \sigma_\pi \ \wh{f_1}(\pi)^*\right) d\mu(\pi),\quad f_1\in L^2(G)
.
$$

Let $\sL(L^2(G))$ denote the set of 
bounded linear operators $L^2(G)\rightarrow L^2(G)$,
and let $\sL_L(L^2(G))$
be the subset formed by the operators in $\sL(L^2(G))$
which commute with the left regular representation $L(g): f\in L^2(G) \mapsto f(g^{-1} \cdot)$, $g\in G$.
Endowed with the operator norm and composition of operators, 
 $\sL_L(L^2(G))$
is a von Neumann algebra.

If $T \in \sL_L(L^2(G))$, 
then there exists a $\mu$-measurable field 
of uniformly bounded operators $\{\sigma^{(T)}_\pi\}$ 
such that for any $f\in L^2(G)$
the Hilbert-Schmidt operators $\wh{Tf}(\pi)$ and $\sigma^{(T)}_\pi \wh f(\pi)$
are equal $\mu$-almost everywhere; 
the field $\{\sigma^{(T)}_\pi\}$ is unique up to a $\mu$-negligible set.

Let $L^\infty(\widehat G)$ denote the space of $\mu$-measurable fields of uniformly bounded operators on $\widehat G$, modulo equivalence with respect to the Plancherel formula $\mu$. As is usual, we will identify such fields with their classes in $L^\infty(\widehat G)$.
We have that if $T\in \sL_L(L^2(G))$, 
then there exists a unique $\sigma\in L^\infty(\widehat G)$ as above.
Note that 
by the Schwartz kernel theorem, 
 the operator $T$ is of convolution type with kernel $\kappa\in \cD'(G)$,
$$
Tf = f* \kappa ,\quad f\in \cD(G)
.
$$
Conversely given a  field $\{\sigma_\pi\}\in L^\infty(\widehat G)$ 
there exists a unique bounded linear operator $T\in \sL_L(L^2(G))$
 satisfying $\wh{Tf}(\pi)=\sigma_\pi \wh f(\pi)$ $\mu$-almost everywhere
 for any $f\in L^2(G)$.

If $\kappa\in \cD'(G)$ is such that the corresponding convolution operator 
$f\in \cD(G)\mapsto f*\kappa $ extends to a bounded operator
$T \in \sL(L^2(G))$
then 
$T \in \sL_L(L^2(G))$
and
we abuse the notation by setting 
$\sigma^{(T)}_\pi:=\pi(\kappa)\equiv\wh\kappa(\pi)$.
We denote by $\cK(G)$ the set of such distributions $\kappa$.
It is a von  Neumann algebra isomorphic to $\sL_L(L^2(G))$
when equipped with the $*$-product $\kappa\mapsto \kappa^*$ where $\kappa^*(x)=\bar \kappa(x^{-1})$,
and  the operator norm 
$$
\|\kappa\|_* := 
\| f\mapsto f*\kappa\|_{\sL(L^2(G))}.
$$

Note that when we equip $L^\infty(\widehat G)$
with the operation 
$\sigma\mapsto \sigma^*$
and the norm
$$
\|\sigma\|_* = \sup_{\pi\in \Gh} \|\wh\sigma_\pi\|_{op},
$$
where  $\|\cdot\|_{op}$ denotes the operator norm
and the supremum is in fact the essential supremum with respect to the Plancherel measure $\mu$,
$L^\infty(\widehat G)$ becomes a  von  Neumann algebra
isomorphic with $\sL_L(L^2(G))$ and $\cK(G)$.
More precisely the group Fourier transform defined on $\cK(G)$
gives the isomorphism between $\cK(G)$ and $L^\infty(\widehat G)$.

\medskip

Throughout this paper, if $\kappa\in \cD'(G)$,
 $T_\kappa$ denotes the convolution operator
 $$
 T_\kappa: \cD(G)\ni f\mapsto f*\kappa
 ,
 $$
 and we keep the same notation for any of its continuous extensions 
 $L^2_b(G)\rightarrow L^2_a(G)$ when they exist.
With norms possibly infinite,
$\|\kappa\|_*$ is equal to the operator norm 
of $T_\kappa:L^2(G)\to L^2(G)$
by the Plancherel theorem,
 and is less than $\|\kappa\|_{L^1(G)}$.

For any $a,b\in \bR$, it is easy to see  
that $T_\kappa$ admits a continuous extension
 $L^2_b(G)\rightarrow L^2_a(G)$
if and only if 
$(I+\tilde \cR)^{-\frac b \nu}(\id+\cR)^{\frac a \nu} \kappa \in \cK(G)$,
with equality between the $L^2_b(G)\rightarrow L^2_a(G)$-operator norm and the $\cK(G)$-norm.
 In this case we may abuse the notation and write
$$
\pi(\id+\cR)^{\frac a\nu} \pi(\kappa) \pi(\id+\cR)^{-\frac b\nu} 
\quad\mbox{instead of}\quad
\pi\left((\id+\cR)^{\frac a\nu} (I+\tilde \cR)^{-\frac b\nu} \kappa \right)
.
$$ 

\section{Quantization and symbols classes}
\label{sec_quantization_S}

As recalled in Introduction, 
there exists a natural quantization  
which is valid on any Lie group of type~I.
We will present it in this section
after defining symbols 
for which this quantization makes sense 
and produces operators $\cD(G)\to\cD'(G)$
with $G$ graded Lie groups.
Moreover, the resulting operators admit 
integral representations with right convolution kernels
and these kernels play a major role in every subsequent proof.
We will also define symbol classes and give some examples of symbols.

\subsection{The symbols and their kernels}
\label{subsec_presymbol}

A \emph{symbol} is a family of operators 
$\sigma=\{\sigma(x,\pi), \ x\in G, \ \pi\in \Gh\}$
 satisfying:
\begin{enumerate}
\item for each $x\in G$, 
$\{\sigma(x,\pi), \ \pi\in \Gh\}$
is a $\mu$-measurable field of operators $\cH_\pi^\infty\rightarrow \cH_\pi$,
\item 
there exist $\gamma_1,\gamma_2\in \bR$ such that 
for any $x\in G$,
\begin{equation}
\label{field_sigma_gamma12}
\{\pi(\id+\cR)^{\gamma_1} \sigma(x,\pi) \pi(\id+\cR)^{\gamma_2}, \ \pi \in \Gh\} \in L^\infty(\widehat G)
,
\end{equation}
\item  
for any $\pi\in \Gh$ and any $u,v\in \cH_{\pi}$,
the scalar function $x\mapsto (\sigma(x,\pi) u,v)_{\cH_\pi}$
is smooth over $G$.
\end{enumerate}

\medskip

Consequently at each $x\in G$ and $\pi\in \Gh$, 
the operator $\sigma(x,\pi)$ is densely defined on $\cH_\pi$;
it is also the case for $X^\beta_x\sigma(x,\pi)$
for any $\beta\in \bN_0^n$.

The second condition implies that for each $x\in G$, the $\mu$-measurable field
\eqref{field_sigma_gamma12} correspond to a distribution $\kappa_{x,\gamma_1,\gamma_2}\in \cK(G)$ which depends smoothly on $x$;
hence  $\sigma$ corresponds to a distribution
$$
\kappa(x, \cdot)=\kappa_x:=(\id+\cR)^{-\gamma_1} (I+\tilde \cR)^{-\gamma_2} \kappa_{x,\gamma_1,\gamma_2}
,
$$
which we call its \emph{kernel}.
By injectivity of $\pi$ on $\cK(G)$, 
$\pi(X_x^\beta\kappa_x)=X_x^\beta \sigma(x,\pi)$. 
 
Examples of symbols are the symbols within the  classes $S^m_{\rho,\delta}$ defined later on.
More concrete examples of symbols which do not depend on $x\in G$ 
are $\pi(X)^\alpha$, $\alpha\in \bN_0^n$ 
or the multipliers in $\pi(\cR)$, that is, 
$\phi(\pi(\cR))$ with $\phi\in L^\infty(\bR)$ (for example).
Indeed for any $\pi\in \Gh$ 
the operator $\pi(\cR)$ is essentially self-adjoint 
\cite{hula+jenkins+ludwig-1985} 
and we denote by $E_\pi$ its spectral projection, 
hence giving a meaning to $\phi(\pi(\cR))$.
The relation between the spectral projections $E$ and $E_\pi$ of $\cR$ and $\pi(\cR)$ is
$$
\pi(\phi(\cR)f)=\phi(\pi(\cR)) \pi(f)
, \quad \phi\in L^\infty(\bR) , \ f\in L^2(G) .
$$
It is known \cite{hula+jenkins+ludwig-1985}
 that the spectrum of  $\pi(\cR)$ consists of discrete eigenvalues in $(0,\infty)$. 
This may add a further justification to using the word \emph{quantization}. 
 
\subsection{The quantization mapping $\sigma\mapsto Op(\sigma)$} 
\label{subsec_quantization}

Our quantization is analogous to the usual Kohn-Nirenberg quantization in the Euclidean setting, and has already  been noticed by Taylor \cite{Tnma},
used indirectly on the Heisenberg group \cite{Tnma,bahouri+FK+gallagher_bk2012}
and explicitly on compact Lie groups \cite{ruzh+turunen_bk2010}.
It associates an operator 
$T=Op(\sigma)$ to a symbol $\sigma$ in the following way
(with the same notation as in Subsection \ref{subsec_presymbol}). 
For any $f\in \cD(G)$ and $x\in G$, 
\begin{eqnarray*}
\int_{\Gh} \tr \left| \sigma(x,\pi)\wh f(\pi)\right| d\mu(\pi)
\leq 
\sup_{\pi\in \Gh} \| \pi(\id+\cR)^{\gamma_1} \sigma(x,\pi)  \ \pi(\id+\cR)^{\gamma_2} \|_{op}\\
\int_{\Gh} \tr \left|  \pi\left((\id+\cR)^{-\gamma_1}(I+\tilde \cR)^{-\gamma_2}   f\right)\right| d\mu(\pi)
,
\end{eqnarray*}
is finite and we can set
\begin{equation}\label{EQ:quant}
Tf(x):= \int_{\Gh} \tr \left(\pi(x)\sigma(x,\pi)\whfpi\right) d\mu(\pi)
.
\end{equation}
We have obtained a continuous linear operator $T:\cD(G)\to \cD'(G)$.
By the Schwartz kernel theorem, $T=Op(\sigma)$ has an integral kernel in the distributional sense. 
However since $\sigma$ is a symbol, 
we obtain directly, still in the distributional sense, 
the following integral representation in terms of the kernel $\kappa$ defined in Subsection \ref{subsec_presymbol},
$$
Tf(x) =f*\kappa_x(x)= \int_G f(y) \kappa(x, y^{-1}x) dy
.
$$

For example, the symbol $\sigma$ given by the identity operator on each space $\cH_\pi$ is associated with the identity operator on $G$; its kernel is the Dirac measure at 0 denoted by $\delta_0$ (independent of the point $x\in G$).
More generally, for any $\alpha\in \bN_0^n$,
 the symbol $\pi(X)^\alpha$ is associated with the operator $X^\alpha$
 with kernel $(-1)^{|\alpha|}X^\alpha \delta_0$ defined in the sense of distributions via
 $$
 \int_G f(g) (-1)^{|\alpha|} X^\alpha \delta_0(g) dg = 
 \int_G X^\alpha  f(g)  \delta_0(g) dg = 
 X^\alpha f(0) .
 $$

It is easy to see that the quantization mapping $\sigma\mapsto T=Op(\sigma)$
is 1-1 and linear.

Before defining symbol classes, 
we need to define difference operators.

 \subsection{Difference operators}
 Difference operators were defined on compact Lie groups in  
 \cite{ruzh+turunen_bk2010}, as acting on Fourier coefficients.
Its adaptation to our setting leads us to define difference operators on $L^\infty(\widehat G)$ 
viewed as fields.
More precisely for any $q\in C^\infty(G)$, we set
$$
\Delta_q\wh f(\pi):=\wh{qf}(\pi) =\pi(qf) .
$$
This defines an operator $\Delta_q$ 
with domain
$\dom(\Delta_q):=\cF_G \{f\in \cK(G), \ qf\in \cK(G)\}$,
and more generally   $\pi(\id+\cR)^{-\gamma_1}\pi(I+\tilde \cR)^{-\gamma_2}\dom(\Delta_q)$ for any $\gamma_1,\gamma_2\in \bR$.
Note that in general, it is not possible to define an operator $\Delta_q$ on each $\cH_\pi$; 
this can be seen quite easily by considering
the multiplication by the central variable
on  the Heisenberg group for example.

The \emph{difference operators} are
$$
\Delta^\alpha :=\Delta_{\tilde q_{\alpha}}, \quad \alpha\in \bN_0^n,
$$
where $\tilde q_{\alpha}(x)=q_\alpha(x^{-1})$
and the $q_\alpha$'s were defined via \eqref{def_qalpha}.

Lemma \ref{lem_property_qalpha} implies that 
$\Delta^{\alpha_1}\Delta^{\alpha_2}$
is a linear combination
of $\Delta^{\alpha}$ with $[\alpha]=[\alpha_1]+[\alpha_2]$.
Furthermore, we have
 \begin{eqnarray*}
&&\tilde q_\alpha(x) \ f_2*f_1(x) 
=
\int_G q_\alpha(x^{-1}  y \ y^{-1}) \ f_2(y)\  f_1(y^{-1} x) \ dy
\\
&&\qquad=
\!\!\!\!\!\!\sum_{[\alpha_1] + [\alpha_2] =[\alpha] }\!\!\!\!\!\!
 c_{\alpha_1,\alpha_2} 
\int_G 
 q_{\alpha_2}(y^{-1})
  f_2(y)\ q_{\alpha_1}(x^{-1}  y) f_1(y^{-1} x) \ dy
\\
&&\qquad=
\!\!\!\!\!\!\sum_{[\alpha_1] + [\alpha_2] =[\alpha] }\!\!\!\!\!\!
c_{\alpha_1,\alpha_2} \
(\tilde q_{\alpha_2}f_2)*(\tilde q_{\alpha_1}f_1) ,
\end{eqnarray*}
and
we get the \emph{Leibniz formula}:
 \begin{equation}
\label{formula_leibniz}
\Delta^\alpha \left(\wh{f_1}(\pi)\wh{f_2}(\pi)\right)
= \sum_{[\alpha_1] + [\alpha_2] =[\alpha] } c_{\alpha_1,\alpha_2} 
\ \Delta^{\alpha_1}\wh{f_1}(\pi)\ \Delta^{\alpha_2} \wh{f_2}(\pi).
\end{equation}


 The idea of difference operators appear naturally 
 when considering operators on the torus $\bT^{n}$.
 In this case  one recovers forward and backward difference
 operators on the lattice $\bZ^{n}$.
 Difference operators were systematically defined and studied 
 on compact Lie groups in \cite{ruzh+turunen_bk2010}.
 On the Heisenberg group, expressions of a related nature 
 were used to describe the Schwartz space in \cite{geller} 
  and with a hypothesis of unitary invariance in  \cite{BJR}.
 
\subsection{The symbol classes $S^m_{\rho,\delta}$}

\begin{definition}
Let $m,\rho,\delta\in \bR$ with $1\geq \rho\geq \delta\geq 0$ and $\delta\not=1$.
A symbol $\sigma$ is a {\em symbol of order $m$ and of type~$(\rho,\delta)$} 
whenever,
for each $\alpha,\beta\in \bN_0^n$ and $\gamma\in \bR$, 
the field
$$
\{\pi(\id+\cR)^{\frac{\rho [\alpha]-m -\delta[\beta] +\gamma}\nu }
X_x^\beta\Delta^\alpha \sigma(x,\pi) 
\pi(\id+\cR)^{-\frac{\gamma}\nu }, \ \pi\in \Gh\}
,
$$
is in $L^\infty(\widehat G)$ uniformly in $x\in G$;
this means that we have
$$
\sup_{\pi\in \Gh, \, x\in G}
\|\pi(\id+\cR)^{\frac{\rho [\alpha]-m -\delta[\beta] +\gamma}\nu }
X_x^\beta\Delta^\alpha \sigma(x,\pi) 
\pi(\id+\cR)^{-\frac{\gamma}\nu }\|_{op} 
=C_{\alpha,\beta,\gamma} <\infty
.
$$
(The supremum over $\pi$ is in fact the essential supremum with respect to the Plancherel measure $\mu$.)

The \emph{symbol class}  $S^m_{\rho,\delta}$ is the set of symbol of order $m$ and of type~$(\rho,\delta)$.

We also define $S^{-\infty}_{\rho,\delta}=\cap_{m\in \bR}S^m_{\rho,\delta}$ the class of  smoothing symbols.
\end{definition}

Let us make some comments on this definition:
\begin{enumerate}
\item 
In the abelian case,
that is, $\bR^n$ endowed with the addition law
and $\cR=-\cL$, $\cL$ being the Laplace operator, 
$S^m_{\rho,\delta}$ boils down to the usual H\"ormander class.
In the case of compact Lie groups
with $\cR=-\cL$, $\cL$ being the Laplace-Beltrami operator, 
a similar definition leads to the one considered in \cite{ruzh+turunen_bk2010}
 since  the operator $\pi(\id+\cR)$ is scalar.
However here, in the case of non-abelian graded groups, 
the operator $\cR$ can not have a scalar 
Fourier transform.
\item
The presence of the parameter $\gamma$ is required to prove that the space of symbols  $\cup_{m\in \bR} S^m_{\rho,\delta}$ form an algebra of operators later on. 
\item\label{rem_gamma_countable} The conditions on $\alpha$ and $\beta$ are of countable nature and it is also the case for $\gamma$. 
Indeed, 
by Lemma \ref{lem_cq_interpolation}, it suffices to prove the property above for one sequence $\{\gamma_n,\, n\in \bZ\}$ 
with 
$\gamma_n\underset  {n\rightarrow \pm\infty}\longrightarrow\pm\infty$.
\item A symbol class $S^m_{\rho,\delta}$ is a vector space.
And we have the inclusions
$$
m_1\leq  m_2,\quad \delta_1\leq\delta_2,\quad
\rho_1\geq\rho_2
\quad \Longrightarrow \quad
S^{m_1}_{\rho_1,\delta_1}\subset S^{m_2}_{\rho_2,\delta_2}
.
$$
\item If $\rho\not=0$, 
we will show in Subsections \ref{subsec_1prop_kernels}
and \ref{subsec_composition} 
that we obtain an algebra of operators 
with smooth kernels $\kappa_x$ away from the origin.

\end{enumerate}

If $\sigma$ is a symbol and $a,b,c\in [0,\infty)$, we set
$$
\|\sigma(x,\pi)\|_{S^m_{\rho,\delta},a,b,c}: = 
\!\!\!\!\!\! \sup_{\substack{|\gamma|\leq c \\ [\alpha]\leq a,\, [\beta]\leq b}}\!\!\!\!\!\!
\|\pi(\id+\cR)^{\frac{\rho [\alpha]-m -\delta[\beta] +\gamma}\nu }
X_x^\beta\Delta^\alpha \sigma(x,\pi) 
\pi(\id+\cR)^{-\frac{\gamma}\nu }\|_{op} 
\, ,
$$
and 
$$
\|\sigma\|_{S^m_{\rho,\delta},a,b,c} := \sup_{x\in G, \, \pi\in \Gh} \|\sigma(x,\pi)\|_{S^m_{\rho,\delta},a,b,c}
.
$$

It is a routine exercise to check that for 
any $m\in \bR$, $\rho,\delta\geq 0$,
 the functions 
$\|\cdot\|_{S^m_{\rho,\delta},a,b,c}$, 
$a,b,c\in [0,\infty) $, are semi-norms over the vector space $S^m_{\rho,\delta}$.
Furthermore, 
with Comment \ref{rem_gamma_countable}  above, 
taking $a,b,c$ as non-negative integers, they endow  
$S^m_{\rho,\delta}$ of a structure of Fr\'echet space.
The class of smoothing symbols is then equipped with the topology of projective limit.

The pseudo-differential operators of order $m\in \bR\cup\{-\infty\}$ and type~$(\rho,\delta)$ are obtained by quantization from the symbols of the same order and type, that is,
$$
\Psi^m_{\rho,\delta}:=Op(S^m_{\rho,\delta})
,
$$
with the quantization given by \eqref{EQ:quant}.
They inherit a structure of topological vector space from the classes of symbols,
$$
\|Op(\sigma)\|_{\Psi^m_{\rho,\delta},a,b,c}:=
\|\sigma\|_{S^m_{\rho,\delta},a,b,c}
.
$$

It is not difficult to see from the computations in Subsection \ref{subsec_quantization} 
that any operator $Op(\sigma)$ is a continuous operator $\cD(G)\rightarrow \cD'(G)$;
in fact, we can show 
that $T$ is continuous $\cS(G)\rightarrow \cS(G)$ 
but the complete proof which uses Theorem \ref{thm_composition} and Proposition \ref{prop_multipliers} 
can be found in  \cite{FR}.

The type~$(1,0)$ is thought of as the basic class of symbols and the 
types~$(\rho,\delta)$ as their generalisations,
the limitation on the parameters $(\rho,\delta)$ 
coming from reasons similar to the ones in the Euclidean settings.
For type~$(1,0)$, we set $S^m:=S^m_{1,0}$, $\Psi^m:=\Psi^m_{1,0}$ 
and,
$$
\|\sigma(x,\pi)\|_{S^m_{1,0},a,b,c}=\|\sigma(x,\pi)\|_{a,b,c}
, \
\|\sigma\|_{S^m_{1,0},a,b,c}=\|\sigma\|_{a,b,c} , \ \mbox{etc}\ldots
$$

Before proving that $\cup_{m\in \bR} S^m_{\rho,\delta}$ and 
$\cup_{m\in \bR} \Psi^m_{\rho,\delta}$ are stable by composition,
let us give some examples.

\subsection{First examples}
\label{subsec_1ex}

As it should be, $\cup_{m\in \bR} \Psi^m$
contains the calculus of left invariant differential operators.
More precisely the following lemma implies that
$\sum_{[\beta]\leq m} c_\beta X^\beta \in \Psi^m$.
The coefficients $c_\alpha$ here are constant 
and it is easy to relax  this condition 
with each function $c_\alpha$ being smooth and bounded as well as all its derivatives.

\begin{lemma}
\label{lem_Xbeta_PsiDO}
For any $\beta_o\in \bN_0^n$, 
the operator $X^{\beta_o}=Op(\pi(X)^{\beta_{0}})$  is in $\Psi^{[\beta_o]}$.
\end{lemma}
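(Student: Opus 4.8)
The plan is to verify the defining estimate of $S^{[\beta_o]}_{1,0}$ for the symbol $\sigma(x,\pi) = \pi(X)^{\beta_o} = \pi(X^{\beta_o})$, which does not depend on $x$. Since $X_x^\beta \sigma(x,\pi) = 0$ for $\beta\neq 0$, and since applying difference operators $\Delta^\alpha$ produces (by the Leibniz formula and the fact that $\pi(X^{\beta_o})$ is the Fourier transform of the distribution $(-1)^{|\beta_o|}X^{\beta_o}\delta_0$) again symbols of the same kind but with $\beta_o$ replaced by a multi-index of lower homogeneous degree, the whole statement reduces to a single family of operator-norm bounds. Concretely, I first record that $\Delta^\alpha \pi(X^{\beta_o})$ is a linear combination of $\pi(X^{\beta'})$ with $[\beta'] = [\beta_o] - [\alpha]$ (and $= 0$ if $[\alpha] > [\beta_o]$); this follows from differentiating the identity $\widehat{q_\alpha^{\vee} g}$-type relations, or more directly from the fact that $\Delta^\alpha$ applied to $\widehat{X^{\beta_o}\delta_0}$ corresponds to multiplying the kernel $X^{\beta_o}\delta_0$ by $\tilde q_\alpha$, and $\tilde q_\alpha(x) X^{\beta_o}\delta_0$ is a combination of derivatives $X^{\beta'}\delta_0$ of homogeneous degree $[\beta_o]-[\alpha]$ (integration by parts against $\tilde q_\alpha$, using that $\tilde q_\alpha$ is a homogeneous polynomial of degree $[\alpha]$ and $q_\alpha(0)$ picks out the relevant term).

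Thus it suffices to show, for every $\beta'\in\bN_0^n$ and every $\gamma\in\bR$, that
$$
\sup_{\pi\in\Gh}\big\|\pi(\id+\cR)^{\frac{-[\beta_o]-\rho[\alpha]+\gamma}{\nu}}\,\pi(X^{\beta'})\,\pi(\id+\cR)^{-\frac{\gamma}{\nu}}\big\|_{op}<\infty,
$$
where $[\beta'] = [\beta_o]-[\alpha]$, so the exponent in front is $\frac{[\beta']-[\beta_o]-[\beta_o]}{\nu}$... more carefully, with $\rho=1$, $\delta=0$, $m=[\beta_o]$ and $[\alpha]$ arbitrary, the required exponent is $\frac{[\alpha]-[\beta_o]+\gamma}{\nu}$, and since $[\beta'] = [\beta_o]-[\alpha]$ this equals $\frac{-[\beta']+\gamma}{\nu}$. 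So the core estimate is
$$
\sup_{\pi\in\Gh}\big\|\pi(\id+\cR)^{\frac{-[\beta']+\gamma}{\nu}}\,\pi(X^{\beta'})\,\pi(\id+\cR)^{-\frac{\gamma}{\nu}}\big\|_{op}<\infty
$$
for all $\beta'$ and $\gamma$. This is exactly a statement about the convolution kernel $(\id+\cR)^{\frac{-[\beta']+\gamma}{\nu}}(I+\tilde\cR)^{-\frac{\gamma}{\nu}}\big((-1)^{|\beta'|}X^{\beta'}\delta_0\big)$ belonging to $\cK(G)$, i.e. about boundedness on $L^2(G)$ of the operator $(\id+\cR)^{\frac{-[\beta']+\gamma}{\nu}}X^{\beta'}(\id+\cR)^{-\frac{\gamma}{\nu}}$ (using that $\tilde\cR$ acts on the right and $\cR$ acts on the left, and both commute with the correct regular representation).

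I would prove that $L^2$-boundedness by interpolation and the subelliptic/Sobolev theory of $\cR$: by Lemma \ref{lem_cq_interpolation} it is enough to establish the bound for $\gamma$ running over a sequence tending to $\pm\infty$, and for $\gamma\in\nu\bN_0$ large (resp.\ $-\gamma\in\nu\bN_0$ large) the claim follows from Proposition \ref{prop_sobolev_spaces}(2), which identifies the $\cR$-Sobolev norm of order $\nu k$ with $\sum_{[\alpha']\leq\nu k}\|X^{\alpha'}f\|_{L^2}$: indeed $X^{\beta'}$ maps $L^2_{\gamma}(G)$ into $L^2_{\gamma-[\beta']}(G)$ boundedly, which is precisely the assertion that $(\id+\cR)^{\frac{\gamma-[\beta']}{\nu}}X^{\beta'}(\id+\cR)^{-\frac{\gamma}{\nu}}$ is $L^2$-bounded. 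The main obstacle — really the only non-bookkeeping point — is this mapping property $X^{\beta'}\colon L^2_\gamma\to L^2_{\gamma-[\beta']}$ for all real $\gamma$; it rests on the Sobolev space machinery of Proposition \ref{prop_sobolev_spaces} (the equivalence of norms for integer orders, the duality, and the interpolation), all of which is quoted from \cite{folland-1975, FR}. Once that is in hand, assembling the pieces — reducing $x$-derivatives away, reducing $\Delta^\alpha$ to lowering $[\beta_o]$, and reading off the exponents — is routine, so I would present it as: (i) compute $X_x^\beta\Delta^\alpha\sigma$, (ii) invoke the kernel characterisation of $\cK(G)$-membership from Subsection \ref{subsec_sobolev_spaces}, (iii) conclude via Sobolev boundedness of $X^{\beta'}$ and interpolation in $\gamma$.
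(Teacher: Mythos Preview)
Your proposal is correct and follows essentially the same approach as the paper: compute $\Delta^\alpha$ on the kernel $(-1)^{|\beta_o|}X^{\beta_o}\delta_0$ to reduce to a linear combination of $\pi(X)^{\beta'}$ with $[\beta']=[\beta_o]-[\alpha]$, and then invoke the Sobolev mapping property $X^{\beta'}\colon L^2_\gamma(G)\to L^2_{\gamma-[\beta']}(G)$. The paper's proof is terser --- it simply asserts the final Sobolev inequality --- whereas you spell out the interpolation and duality route via Proposition~\ref{prop_sobolev_spaces} and Lemma~\ref{lem_cq_interpolation}; the substance is identical.
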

\begin{proof}
For any $\alpha\in \bN_0^n$, 
in the sense of distributions,
$$
\int_G f(g)  (\tilde q_\alpha (-1)^{|\beta_o|}X^{\beta_{0}} \delta_0)(g) dg
=
\int_G X^{\beta_o} \left\{\tilde q_\alpha(g) f(g) \right\}  \delta_0(g) dg
,
$$
is always zero if $[\alpha]<[\beta_o]$ 
or $[\alpha]=[\beta_o]$ with $\alpha\not=\beta_o$.
If $[\alpha]>[\beta_o]$ or $[\alpha]=[\beta_o]$ with $\alpha=\beta_o$,
then it is equal to $X^{\beta_o-\alpha}f$ up to some constant $c_{\alpha,\beta_o}\in \bR$.
Moreover, in the latter case, we get
\begin{eqnarray*}
\| f* (\tilde q_\alpha (-1)^{|\beta_o|}X^{\beta_{0}} \delta_0)\|_{L^2_{[\alpha]-[\beta_o] +\gamma}}
&=&
|c_{\alpha,\beta_{0}}|
\|X^{\beta_{0}-\alpha} f\|_{L^2_{[\alpha]-[\beta_o] +\gamma}(G)}
\\
&\leq& C_{\alpha,\beta_o} \|f\|_{L^2_\gamma(G)} .
\end{eqnarray*}
This shows 
$\|\pi(\id+\cR)^{\frac{[\alpha]-[\beta_o] +\gamma}\nu }
\Delta^\alpha \pi(X)^{\beta_o}
\pi(\id+\cR)^{-\frac{\gamma}\nu }\|_{op}\leq C_{\alpha,\beta_o}$.
\end{proof}

An example of smoothing operator is given by convolution with a Schwartz function:
\begin{lemma}
\label{lem_ex_smoothing_op}
If $\kappa\in \cS(G)$ then $T_\kappa \in \Psi^{-\infty}$.
Furthermore, the mapping $\cS(G)\ni \kappa\mapsto T_\kappa \in \Psi^{-\infty}$ is continuous.
\end{lemma}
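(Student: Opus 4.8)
The plan is to identify $T_\kappa$ with $Op(\sigma)$ for the ($x$-independent) symbol $\sigma(x,\pi):=\wh\kappa(\pi)=\pi(\kappa)$, whose associated kernel is $\kappa$ itself (by the integral representation in Subsection \ref{subsec_quantization}), and then to show $\sigma\in S^{-\infty}=\cap_{m}S^m_{1,0}$ together with the seminorm bounds that give the continuity of $\kappa\mapsto T_\kappa$.

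Since $\sigma$ does not depend on $x$, we have $X_x^\beta\Delta^\alpha\sigma(x,\pi)=0$ for $\beta\neq 0$, while $\Delta^\alpha\sigma(x,\pi)=\Delta^\alpha\wh\kappa(\pi)=\wh{\tilde q_\alpha\kappa}(\pi)=\pi(\tilde q_\alpha\kappa)$, and $\tilde q_\alpha\kappa\in\cS(G)$ because $\tilde q_\alpha$ is a polynomial. Hence, using $\delta=0$, the condition $\sigma\in S^{-\infty}$ reduces to proving that for every $\alpha\in\bN_0^n$ and all $s,t\in\bR$,
$$
\sup_{\pi\in\Gh}\big\|\pi(\id+\cR)^{s}\,\pi(\tilde q_\alpha\kappa)\,\pi(\id+\cR)^{t}\big\|_{op}<\infty ,
$$
with the supremum controlled by a Schwartz seminorm of $\kappa$ (this last point yields the asserted continuity, since for fixed $m,a,b,c$ only $\beta=0$ and finitely many $\alpha$ with $[\alpha]\le a$, $|\gamma|\le c$ enter $\|T_\kappa\|_{\Psi^m,a,b,c}$). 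By the identification of $\cK(G)$ with $L^\infty(\Gh)$ recalled in Section \ref{sec_preliminaries}, the operator inside the norm is $\pi\big((\id+\cR)^{s}(\id+\tilde\cR)^{t}(\tilde q_\alpha\kappa)\big)$, so the displayed supremum equals $\|(\id+\cR)^{s}(\id+\tilde\cR)^{t}(\tilde q_\alpha\kappa)\|_{*}\le\|(\id+\cR)^{s}(\id+\tilde\cR)^{t}(\tilde q_\alpha\kappa)\|_{L^1(G)}$. Thus everything reduces to showing that $(\id+\cR)^{s}(\id+\tilde\cR)^{t}$ maps $\cS(G)$ continuously into $\cS(G)$ for all $s,t\in\bR$. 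Writing $s=M-u$ and $t=M'-u'$ with $M,M'\in\bN_0$ and $u,u'\ge 0$, one has (using that $\cR$ and $\tilde\cR$ commute, and $(\id+\cR)^{-u}\psi=\psi*\cB_{u\nu}$, $(\id+\tilde\cR)^{-u'}\psi=\cB_{u'\nu}*\psi$)
$$
(\id+\cR)^{s}(\id+\tilde\cR)^{t}\psi=(\id+\cR)^{M}(\id+\tilde\cR)^{M'}\big(\cB_{u'\nu}*\psi*\cB_{u\nu}\big);
$$
the integer powers are left/right-invariant differential operators and preserve $\cS(G)$, so the matter comes down to the claim that $\cS(G)*\cB_a\subset\cS(G)$ (and symmetrically $\cB_a*\cS(G)\subset\cS(G)$), continuously, for $a>0$.

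This last claim I would either invoke from the stratified theory of Bessel potentials (\cite{folland-1975}, \cite[Ch.~4]{folland+stein_bk82}) or prove directly: to estimate $x^\alpha X^\beta(\psi*\cB_a)$ for $\psi\in\cS(G)$, transfer all left-invariant derivatives and the polynomial weight off $\psi*\cB_a$ onto the smooth factor $\psi$, using that $\Ad_z$ is polynomial in $z$ on the nilpotent group $G$ and the coproduct-type expansion of the $q_\gamma$'s under the group law from Lemma \ref{lem_property_qalpha}; this rewrites $x^\alpha X^\beta(\psi*\cB_a)$ as a finite sum of convolutions $[p\,X^{\beta'}\psi]*[p'\cB_a]$ with $p,p'$ polynomials, each bounded since $p\,X^{\beta'}\psi\in\cS(G)$ is bounded and $p'\cB_a\in L^1(G)$ ($\cB_a$ lies in $L^1(G)$ and is rapidly decreasing at infinity). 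The main obstacle is exactly this regularity point: because $\cB_a$ has a non-integrable-derivative singularity at the origin for small $a$, one cannot differentiate under the convolution by letting derivatives fall on $\cB_a$, and the whole argument relies on moving them onto $\psi$, which is where the nilpotent structure (polynomiality of $\Ad$, Baker--Campbell--Hausdorff, Lemma \ref{lem_property_qalpha}) is used; once this is available, the remaining work (the reduction above, and tracking which Schwartz seminorm of $\kappa$ dominates each $\|T_\kappa\|_{\Psi^m,a,b,c}$ uniformly in $|\gamma|\le c$) is routine bookkeeping.
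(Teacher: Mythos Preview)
Your proof is correct and begins with the same reduction as the paper (identify the symbol as $\pi(\kappa)$, note that $X_x^\beta$-terms vanish, and bound the $L^\infty(\Gh)$-norm by the $L^1(G)$-norm of $(\id+\cR)^{s}(\id+\tilde\cR)^{t}(\tilde q_\alpha\kappa)$), but at the final step you work harder than necessary and take a different route. The paper never proves that $(\id+\cR)^s(\id+\tilde\cR)^t$ preserves $\cS(G)$, nor that $\cS(G)*\cB_a\subset\cS(G)$; it only needs the image to lie in $L^1(G)$. For this it simply observes that $(\id+\cR)^a\kappa\in L^1(G)$ for every $a\ge 0$ and $\kappa\in\cS(G)$: trivially for $a\in\bN_0$ since $\cR$ is a differential operator, and for general $a$ by writing $(\id+\cR)^a\kappa$ as an integer power applied to $\kappa$ convolved with an $L^1$ Bessel potential. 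Then a sign-based case split---putting the nonnegative power of $(\id+\cR)$ or $(\id+\tilde\cR)$ on the side of $\tilde q_\alpha\kappa$ and absorbing the nonpositive one into a Bessel potential factor in $L^1$---handles both exponents without ever touching $\Ad$ or Lemma~\ref{lem_property_qalpha}; the mixed-sign values of $\gamma$ are covered by the interpolation remark (Comment~(3) after the definition of $S^m_{\rho,\delta}$). Your route via $\cS$-preservation is valid and gives a stronger intermediate fact, but the paper's argument is shorter and avoids the $\Ad$-transfer machinery entirely.
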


\begin{proof}
For any $\kappa\in \cS(G)$ and $a\geq 0$,
we have $(\id+\cR)^a\kappa \in L^1(G)$.
Indeed, it is true if $a\in \bN_0$; 
if $a\not\in \bN_0$, 
then writing
$$
(\id+\cR)^a\kappa = \left\{(\id+\cR)^{\lceil a\rceil }\kappa \right\}
* \cB_{a-\lceil a\rceil}
,
$$
we get
$$
\|(\id+\cR)^a\kappa\|_{L^1(G)} 
\leq
\| (\id+\cR)^{\lceil a\rceil }\kappa\|_{L^1(G)} 
\| \cB_{a-\lceil a\rceil}\|_{L^1(G)} 
.
$$
We have also the same property for $\tilde \cR$ by adapting the proof above.

Let $m\in \bR$.
For any $\gamma\in \bR$ and $\alpha\in \bN_0^n$
such that $\gamma$ and $[\alpha]-m+\gamma$ are of the same sign, we have
\begin{eqnarray*}
&&\sup_{\pi\in \Gh}
\|\pi(\id+\cR)^{\frac{[\alpha]-m +\gamma}\nu }
\Delta^\alpha \pi(\kappa)
\pi(\id+\cR)^{-\frac{\gamma}\nu }\|_{op}\leq 
\\&& 
\left\{\begin{array}{ll}
\|\cB_\gamma\|_{L^1(G)} 
\|(\id+\cR)^{\frac{[\alpha]-m+\gamma} \nu}\tilde q_\alpha \kappa\|_{L^1(G)} 
& \mbox{if} \ \gamma, [\alpha]-m+\gamma\geq 0 ,\\
\|(\id+\tilde \cR)^{-\frac{\gamma} \nu}\tilde q_\alpha\kappa\|_{L^1(G)} 
\|\cB_{-([\alpha] -m+\gamma)}\|_{L^1(G)} 
& \mbox{if} \ \gamma,[\alpha]-m+\gamma\leq 0 .\\
\end{array}\right.
\end{eqnarray*}
It is now clear that $T_\kappa\in \Psi^m$ and that any semi-norm 
$\|T\|_{\Psi^m,a,b,c}$ is controlled by some Schwartz semi-norm of $\kappa$.
\end{proof}

By Lemma \ref{lem_ex_smoothing_op} and Remark \ref{rem_cq_hula},
 if $\phi\in \cS(\bR)$ then $\phi(\cR)\in \Psi^{-\infty}$.
This last consequence could also be obtained via the next example.

\medskip

The $\cR$-multipliers in the following class of functions yields operators in the calculus.
We consider  the space $\cM_m$ of smooth functions $\phi$ on $[0,\infty)$ 
satisfying for every $k\in\bN_0$:
$$
\| \phi\|_{\cM_m , k} := \sup_{\lambda \geq 0,\, k_1\leq k}
\left| (1+\lambda)^{-m+k_1} \partial_\lambda^{k_1} \phi(\lambda)\right| \ 
<\infty
.
$$
An important example is $\phi(\lambda)=(1+\lambda)^m$, $m\in \bR$.

\begin{proposition}
\label{prop_multipliers}
Let $m\in \bR$ and $\phi\in \cM_{\frac m \nu}$. 
Then $\phi(\cR)$ is in $\Psi^m$ and its symbol satisfies
$$
\forall a,b,c \in \bN\qquad
\exists k\in \bN, \ C>0\; : \qquad
\|\phi(\pi(\cR))\|_{a,b,c} \leq C \| \phi\|_{\cM_{\frac m \nu} , k}
,
$$
with $k$ and $C$ independent of $\phi$.
\end{proposition}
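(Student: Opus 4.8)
Since the symbol $\sigma(x,\pi)=\phi(\pi(\cR))$ does not depend on $x$, the derivatives $X_x^\beta\sigma$ vanish for $\beta\neq 0$, so it suffices to bound, for each $\alpha\in\bN_0^n$ and $\gamma\in\bR$, the quantity $\sup_{\pi\in\Gh}\|\pi(\id+\cR)^{\frac{[\alpha]-m+\gamma}{\nu}}\Delta^\alpha\phi(\pi(\cR))\pi(\id+\cR)^{-\frac{\gamma}{\nu}}\|_{op}$. Now $\Delta^\alpha\phi(\pi(\cR))=\pi(\tilde q_\alpha\,\phi(\cR)\delta_0)$, and by the identification of $\cK(G)$ with $L^\infty(\widehat G)$ recalled at the end of Section~\ref{sec_preliminaries} this supremum equals $\|\kappa_{\alpha,\gamma}\|_*$, where $\kappa_{\alpha,\gamma}:=(\id+\cR)^{\frac{[\alpha]-m+\gamma}{\nu}}(\id+\tilde\cR)^{-\frac{\gamma}{\nu}}\big(\tilde q_\alpha\,\phi(\cR)\delta_0\big)$. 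So the statement reduces to proving $\|\kappa_{\alpha,\gamma}\|_*\le C\|\phi\|_{\cM_{m/\nu},k}$ with $C,k$ depending only on upper bounds for $[\alpha]$ and $|\gamma|$. (If convenient one may first write $\phi(\cR)=(\id+\cR)^{\lceil m/\nu\rceil}\psi(\cR)$, $\psi=\phi(1+\cdot)^{-\lceil m/\nu\rceil}$, and use Lemma~\ref{lem_Xbeta_PsiDO} together with the symbol algebra (R1) to reduce to $m\in(-\nu,0]$; but the Mihlin-type case $m=0$ is not easier, so this step is inessential.)

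The case $\alpha=0$ is immediate: the field is $(1+\pi(\cR))^{\frac{\gamma-m}{\nu}}\phi(\pi(\cR))(1+\pi(\cR))^{-\frac{\gamma}{\nu}}=\phi(\pi(\cR))(1+\pi(\cR))^{-m/\nu}$ (scalar functions of $\pi(\cR)$ commute), whose operator norm is at most $\sup_{\lambda\ge0}|\phi(\lambda)|(1+\lambda)^{-m/\nu}=\|\phi\|_{\cM_{m/\nu},0}$.

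For $\alpha\neq0$ (so $[\alpha]\ge1$) I would use a Littlewood--Paley decomposition $\phi=\sum_{j\ge 0}\phi_j$, with each $\phi_j$ smooth and supported in $[2^{j-1},2^{j+1}]$ for $j\ge1$; by Remark~\ref{rem_cq_hula}, $\phi_j(\cR)\delta_0\in\cS(G)$. Rescaling, $\phi_j(\cR)=2^{jm/\nu}\eta_j(2^{-j}\cR)$ where $\eta_j$ is supported in $[1/2,2]$ with $\|\eta_j\|_{C^r}\lesssim\|\phi\|_{\cM_{m/\nu},r}$ uniformly in $j$; hence, by Proposition~\ref{prop_hula}, $\rho_j:=\tilde q_\alpha\,\eta_j(\cR)\delta_0\in\cS(G)$ with Schwartz semi-norms $\lesssim\|\phi\|_{\cM_{m/\nu},k}$ uniformly in $j$. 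Writing $v_r(x):=v(r\cdot x)$, the homogeneity of $\cR$ and of $\tilde q_\alpha$ give $\tilde q_\alpha\,\phi_j(\cR)\delta_0=2^{j(m+Q-[\alpha])/\nu}(\rho_j)_{2^{j/\nu}}$; combining this with the scaling identities $(\id+\cR)^s v_r=\big((\id+r^\nu\cR)^s v\big)_r$ (likewise for $\tilde\cR$) and $\|w_r\|_*=r^{-Q}\|w\|_*$ yields, with $a=\tfrac{[\alpha]-m+\gamma}{\nu}$, $b=\tfrac{\gamma}{\nu}$,
$$\big\|(\id+\cR)^{a}(\id+\tilde\cR)^{-b}\big(\tilde q_\alpha\phi_j(\cR)\delta_0\big)\big\|_*=2^{j(m-[\alpha])/\nu}\,\big\|(\id+2^j\cR)^{a}(\id+2^j\tilde\cR)^{-b}\rho_j\big\|_*.$$
One then checks, as in the case $\alpha=0$ but with the Schwartz factor $\rho_j$ in place of a scalar multiplier, that the right-hand side is $\lesssim\|\phi\|_{\cM_{m/\nu},k}$ uniformly in $j$: the weight $2^{j(m-[\alpha])/\nu}$ is cancelled exactly by the factors $(1+2^j\pi(\cR))^{\pm}$, which behave like $2^{\pm j(\cdot)}$ on the bulk of the spectral support of $\pi(\rho_j)$, and one uses the uniform bounds $\|(1+\pi(\cR))^s\pi(\rho_j)(1+\pi(\cR))^{s'}\|_{op}\lesssim_{s,s'}\|\phi\|_{\cM,k}$ obtained by Lemma~\ref{lem_ex_smoothing_op}-type estimates. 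Denoting by $T_j$ the convolution operator with kernel $(\id+\cR)^{a}(\id+\tilde\cR)^{-b}(\tilde q_\alpha\phi_j(\cR)\delta_0)$, the claim amounts to $\|\sum_j T_j\|_{L^2(G)\to L^2(G)}\lesssim\|\phi\|_{\cM_{m/\nu},k}$.

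Since the $\|T_j\|$ are only uniformly bounded and do not decay in $j$, this sum must be controlled by almost orthogonality: via Cotlar--Stein it suffices to show $\|T_j^*T_k\|+\|T_jT_k^*\|\lesssim 2^{-\varepsilon|j-k|}\|\phi\|_{\cM,k}^2$ for some $\varepsilon>0$. The symbols of these products involve $\Delta^\alpha[\phi_j(\pi(\cR))]$ and $\Delta^\alpha[\phi_k(\pi(\cR))]$, which are ``nearly'' supported on the disjoint spectral intervals $[2^{j-1},2^{j+1}]$ and $[2^{k-1},2^{k+1}]$, and making this precise is the step I expect to be the main obstacle: one has to control how the difference operator $\Delta^\alpha$ — i.e.\ multiplication by the homogeneous polynomial $\tilde q_\alpha$ on the convolution-kernel side — spreads spectral support. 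This is done by a commutator expansion of $\Delta^\alpha$ against smooth spectral cut-offs of $\cR$, using the Leibniz rule \eqref{formula_leibniz} and Proposition~\ref{prop_hula}, the outcome being that $\Delta^\alpha[\phi_j(\pi(\cR))]$ is a multiplier spectrally concentrated near $[2^{j-1},2^{j+1}]$ plus remainders whose $\cK(G)$-norms decay rapidly in the distance to that interval. All remaining ingredients — the bookkeeping of dilations and the $L^1$/Sobolev estimates — are routine in the spirit of Lemmas~\ref{lem_Xbeta_PsiDO}--\ref{lem_ex_smoothing_op}.
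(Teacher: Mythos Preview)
The paper does not actually prove this proposition here; it refers to the monograph \cite{FR} and records only that the argument ``is based on Proposition~\ref{prop_hula} and the Cotlar--Stein Lemma''. Your proposal uses precisely these two ingredients --- Hulanicki's estimates to control the dyadic pieces $\phi_j(\cR)$ after rescaling, and Cotlar--Stein to sum them --- so your strategy agrees with the one the authors have in mind, and your dilation bookkeeping is correct.

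Your outline is sound and you locate the crux accurately: the difference operator $\Delta^\alpha$ (multiplication of the kernel by $\tilde q_\alpha$) does not preserve spectral localisation in $\pi(\cR)$, so neither the uniform bound $\|T_j\|\lesssim\|\phi\|_{\cM,k}$ nor the off-diagonal decay $\|T_j^*T_k\|\lesssim 2^{-\varepsilon|j-k|}$ is automatic. One remark worth making explicit: the sentence ``the weight $2^{j(m-[\alpha])/\nu}$ is cancelled exactly by the factors $(1+2^j\pi(\cR))^{\pm}$, which behave like $2^{\pm j(\cdot)}$ on the bulk of the spectral support of $\pi(\rho_j)$'' already presupposes that $\pi(\rho_j)$ is essentially concentrated where $\pi(\cR)\sim 1$ --- and this is exactly the spreading issue you flag only in the following paragraph. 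In other words, the commutator/Leibniz mechanism you propose (write $\eta_j=\eta_j\chi$ with a fixed bump $\chi$, expand $\Delta^\alpha[\eta_j(\pi(\cR))\chi(\pi(\cR))]$ via \eqref{formula_leibniz}, and use Proposition~\ref{prop_hula} to control every factor $\Delta^{\alpha'}[\chi(\pi(\cR))]$ uniformly) is needed already for the diagonal bound on each $T_j$, not only for the Cotlar--Stein step. Once that device is in place, the rest of your outline goes through.
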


The proof of Proposition \ref{prop_multipliers} 
can be found in \cite{FR}.
It is based on  Proposition \ref{prop_hula} and the Cotlar-Stein Lemma.

\section{Some properties of symbols, kernels and operators}
\label{sec_prop_symbol_kernel_op}

In this section, we give more explicitly the properties (R1), (R2) and (R4)
given in Introduction.

\subsection{First properties of the symbols}
\label{subsec_1prop_symbols}

The following properties of the symbol $\sigma\in S^m_{\rho,\delta}$ of an
operator with kernel $\kappa_x$
are not difficult to obtain.
\begin{enumerate}
\item If $\beta_o\in \bN_0^n$ then the symbol $X^{\beta_o}_x \sigma(x,\pi)$ is in $S^{m+\delta[\beta_o]}_{\rho,\delta}$ with kernel $X_x^{\beta_o} \kappa_x$
and, 
$$
\|X^{\beta_o}_x \sigma(x,\pi)\|_{S^m_{\rho,\delta}, a,b,c} \leq C_{b,\beta_o}
\| \sigma(x,\pi)\|_{S^m_{\rho,\delta}, a,b + [\beta_o],c}
.
$$
\item If $\alpha_o\in \bN_0^n$ then the symbol $\Delta^{\alpha_o} \sigma(x,\pi)$ is in $S^{m-\rho[\alpha_o]} _{\rho,\delta}$ with kernel $\tilde q_{\alpha_o} \kappa_x$
and, 
$$
\|\Delta^{\alpha_o}\sigma(x,\pi)\|_{S^m_{\rho,\delta}, a,b,c} \leq C_{b,\beta_o}
\| \sigma(x,\pi)\|_{S^m_{\rho,\delta}, a+[\alpha_o],b,c}
.
$$
\item The symbol $\sigma(x,\pi)^*$ is in $S^m_{\rho,\delta}$ with kernel
$\kappa_x^*:y\mapsto \bar\kappa_x(y^{-1})$ and,
$$
\|\sigma(x,\pi)^*\|_{S^m_{\rho,\delta},a,b,c}= 
\!\!\!\!\!\! \sup_{\substack{|\gamma|\leq c \\ [\alpha]\leq a,\, [\beta]\leq b}}\!\!\!\!\!\!
\|
\pi(\id+\cR)^{-\frac{\gamma}\nu }
X_x^\beta\Delta^\alpha \sigma(x,\pi) 
\pi(\id+\cR)^{\frac{\rho [\alpha]-m -\delta[\beta] +\gamma}\nu }\|_{op} 
\, .
$$
\item Let
$\sigma_1\in S^{m_1}_{\rho,\delta}$ and $\sigma_2\in S^{m_2}_{\rho,\delta}$ with 
respective kernels $\kappa_{1x}$ and $\kappa_{2x}$.
Then $\sigma(x,\pi):=\sigma_1(x,\pi)\sigma_2(x,\pi)$ defines the symbol
$\sigma$ in $S^{m}_{\rho,\delta}$, $m=m_1+m_2$,
with kernel
$\kappa_{2x}*\kappa_{1x}$; furthermore
$$
\|\sigma(x,\pi) \|_{S^m_{\rho,\delta},a,b,c}
\leq C
\|\sigma_1(x,\pi) \|_{S^{m_1}_{\rho,\delta},a,b,c+\rho a+|m_2|+\delta b}
\|\sigma_2(x,\pi) \|_{S^{m_2}_{\rho,\delta},a,b,c}
.
$$
where the constant $C=C_{a,b,c} >0$ does not depend on $\sigma$.

Indeed from the Leibniz rule for $\Delta^\alpha$ and $X^\beta$,
the operator
$$
\pi(\id+\cR)^{\frac{[\alpha]-m+\gamma}\nu} X^\beta_x \Delta^\alpha \sigma(x,\pi)\pi(\id+\cR)^{-\frac \gamma \nu}
,
$$
is a linear combination over $\beta_1,\beta_2,\alpha_1,\alpha_2\in \bN_0^n$ satisfying $[\beta_1]+[\beta_2]=[\beta]$, $[\alpha_1]+[\alpha_2]=[\alpha]$, of terms
$$
\pi(\id+\cR)^{\frac{\rho[\alpha]-m -\delta [\beta]+\gamma}\nu} 
X^{\beta_1}_x \Delta^{\alpha_1} \sigma_1(x,\pi)
X^{\beta_2}_x \Delta^{\alpha_2} \sigma_2(x,\pi)
\pi(\id+\cR)^{-\frac \gamma \nu}
,
$$
whose operator norm is less than
\begin{eqnarray*}
\|\pi(\id+\cR)^{\frac{\rho[\alpha]-m-\delta [\beta]+\gamma}\nu} 
X^{\beta_1}_x \Delta^{\alpha_1} \sigma_1(x,\pi)
\pi(\id+\cR)^{-\frac{\rho[\alpha_2]-m_2-\delta[\beta_2]+\gamma}\nu}
\|_{op}\\
\|\pi(\id+\cR)^{\frac{\rho[\alpha_2]-m_2 -\delta[\beta_2] +\gamma}\nu}
X^{\beta_2}_x \Delta^{\alpha_2} \sigma_2(x,\pi)
\pi(\id+\cR)^{-\frac \gamma \nu}\|_{op}.
\end{eqnarray*}

Consequently,  the collection of symbols $\cup_{m\in \bR}S^m_{\rho,\delta}$ forms an algebra.
\item
Using the previous point and the left calculus 
(see Lemma \ref{lem_Xbeta_PsiDO}),
if $\sigma\in S^m_{\rho,\delta}$ with kernel $\kappa_x$, 
then $\pi(X)^{\beta} \sigma \, \pi(X)^{\tilde \beta}$ is in $S^{m+[\beta]+[\tilde \beta]}$ with kernel $X^\beta_y\tilde X^{\tilde \beta}_y \kappa_x(y)$.
\end{enumerate}

\subsection{First properties of the kernels}
\label{subsec_1prop_kernels}

As expected from pseudo-differential calculi on manifolds 
such as homogeneous Lie groups,
the kernels of the operators of order 0 are of Calderon-Zygmund type 
in the sense of Coifman-Weiss \cite[ch. III]{coifman+weiss-LNM71}.
This claim is a consequence of the following proposition together with the properties of the symbols.
\begin{proposition}
\label{prop_kernel}
Assume $\rho\in (0,1]$ and let us fix a homogeneous norm $|\cdot|$ on $G$. 
Let $\sigma \in S^m_{\rho,\delta}$ and $\kappa_x$ the associated kernel.

Then for each $x\in G$, the distribution $\kappa_x$ coincides with a smooth function in $G\backslash \{0\}$. 
Furthermore, $(x,y)\mapsto \kappa(x,y)$ is a smooth function on $G\times (G\backslash \{0\})$, and we have:
\begin{enumerate}
\item 
There exists $C>0$ and  $a,b,c\in \bN_0$ such  that 
for any $y\in G$ with $|y|<1$,
$$
 |\kappa_x(y)| \leq C 
\sup_{\pi\in \Gh} \|\sigma(x,\pi)\|_{S^m_{\rho,\delta},a,b,c}
\left\{\begin{array}{ll}
|y|^{-\frac{Q+m} \rho} & \mbox{if}\ Q+m>0\\
1&\mbox{if}\ Q+m<0\\
\ln |y|&\mbox{if}\ Q+m=0\\
\end{array}\right.
.
$$
\item
For any $M\in \bN_0$,
there exists $C>0$ and  $a,b,c\in \bN_0$ such  that 
for any $y\in G$ with $|y|\geq 1$, 
$$
 |\kappa_x(y)| \leq C 
\sup_{\pi\in \Gh} \|\sigma(x,\pi)\|_{S^m_{\rho,\delta},a,b,c} |y|^{-M}
.
$$
\end{enumerate}
\end{proposition}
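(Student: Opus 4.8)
The plan is to split the analysis into the behaviour of $\kappa_x$ away from $y=0$ (together with the smoothness) and its singularity at $y=0$; the first part is comparatively soft, while the second requires a Littlewood--Paley decomposition adapted to $\cR$.

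\textbf{Away from the origin, and smoothness.} Fix a symmetric homogeneous norm $|\cdot|$; then for each $N\in\bN$ the function $|\cdot|^{2\nu_o N}$ is a homogeneous polynomial of degree $2\nu_o N$ invariant under inversion, so by Lemma \ref{lem_property_qalpha} we may write $|y|^{2\nu_o N}=\sum_{[\alpha]=2\nu_o N}c_\alpha\tilde q_\alpha(y)$ with real $c_\alpha$. I first record the \emph{base case}: if $\tau\in S^{m'}_{\rho,\delta}$ has kernel $\lambda_x$ and $m'$ is sufficiently negative (depending on $L\in\bN_0$), then $\lambda_x$ has a $C^L$ representative, $(x,y)\mapsto\lambda(x,y)$ and all its derivatives $X^{\beta'}_xX^\beta_y$ of order $\le L$ are continuous on $G\times G$, and $\sup_x\|\lambda_x\|_{C^L}\le C\,\|\tau\|_{S^{m'}_{\rho,\delta},a,b,c}$ for suitable $a,b,c$. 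Indeed, by properties (1),(5) of Subsection \ref{subsec_1prop_symbols} the kernel of $X^{\beta'}_xX^\beta_y\lambda_x$ is that of $\pi(X)^\beta X^{\beta'}_x\tau\in S^{m''}_{\rho,\delta}$ with $m''=m'+[\beta]+\delta[\beta']$; for $Q/2<s<-m''-Q/2$ (possible once $m''<-Q$) one factors $\pi(\id+\cR)^{s/\nu}\pi(X)^\beta X^{\beta'}_x\tau=\pi(\id+\cR)^{(s+m'')/\nu}A$ with $\|A\|_{op}$ bounded by a seminorm of $\tau$, and since $-(s+m'')>Q/2$ the field $\pi(\id+\cR)^{(s+m'')/\nu}$ is the Fourier transform of the $L^2$-function $\cB_{-(s+m'')}$; Plancherel and the Sobolev inequality (Lemma \ref{lem_sob_ineq}) give the claim. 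Applying this to $\tau=\sum_{[\alpha]=2\nu_o N}c_\alpha\Delta^\alpha\sigma\in S^{m-2\rho\nu_o N}_{\rho,\delta}$ — here the hypothesis $\rho>0$ is used — whose kernel is $|\cdot|^{2\nu_o N}\kappa_x$, we get that $(x,y)\mapsto|y|^{2\nu_o N}\kappa(x,y)$ is $C^L$ as soon as $N$ is large; dividing by $|\cdot|^{2\nu_o N}$, which is smooth and $>0$ on $G\setminus\{0\}$, shows that $(x,y)\mapsto\kappa(x,y)\in C^L(G\times(G\setminus\{0\}))$ for every $L$, hence smooth, and that $|\kappa_x(y)|\le C_N|y|^{-2\nu_o N}\|\sigma\|_{S^m_{\rho,\delta},a,b,c}$ for $|y|\ge1$, which is assertion (2).

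\textbf{The singularity at $y=0$.} Fix a dyadic partition $1=\phi_0(\lambda)+\sum_{j\ge1}\phi_j(\lambda)$ on $[0,\infty)$ with $\phi_0\in C^\infty_c([0,\infty))$ equal to $1$ near $0$ and $\phi_j=\theta(2^{-j\nu}\cdot)$ for a fixed $\theta\in C^\infty_c((0,\infty))$ supported in an annulus, and correspondingly $\kappa_x=\sum_{j\ge0}\kappa_x^{(j)}$, where $\kappa_x^{(j)}=\phi_j(\cR)\delta_0*\kappa_x$ is the kernel of $\sigma(x,\pi)\phi_j(\pi(\cR))$. The heart of the matter is the per-piece estimate
\[
|X^{\beta'}_xX^\beta_y\kappa_x^{(j)}(y)|\le C\,2^{j(Q+m+[\beta]+\delta[\beta'])}(1+2^{j\rho}|y|)^{-M}\,\|\sigma\|_{S^m_{\rho,\delta},a,b,c}\qquad(j\ge0).
\]
By properties (1),(5) of Subsection \ref{subsec_1prop_symbols} one may replace $\sigma$ by $\pi(X)^\beta X^{\beta'}_x\sigma$, so it suffices to take $\beta=\beta'=0$. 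By Remark \ref{rem_cq_hula} and the homogeneity $h_t(y)=t^{-Q/\nu}h(t^{-1/\nu}y)$ one has $\phi_j(\cR)\delta_0=2^{jQ}\psi(2^j\cdot)$ with $\psi=\theta(\cR)\delta_0\in\cS(G)$, and likewise $(\id+\cR)^{s/\nu}\phi_j(\cR)\delta_0=2^{j(s+Q)}\Xi^{(s)}_j(2^j\cdot)$ with $\{\Xi^{(s)}_j\}_{j\ge0}$ bounded in $\cS(G)$ (apply Proposition \ref{prop_hula} to $\lambda\mapsto(2^{-j\nu}+\lambda)^{s/\nu}\theta(\lambda)$, whose derivatives are bounded uniformly in $j\ge0$ on the support of $\theta$). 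Choosing a slightly wider annular cut-off $\phi_j'$ equal to $1$ on the support of $\phi_j$ gives $\kappa_x^{(j)}=\phi_j'(\cR)\delta_0*\kappa_x^{(j)}$, whence by Cauchy--Schwarz, Plancherel and $\|\sigma(x,\pi)\pi(\id+\cR)^{-m/\nu}\|_{op}\le\|\sigma\|_{S^m_{\rho,\delta},0,0,|m|}$,
\[
\|\kappa_x^{(j)}\|_{L^\infty(G)}\le\|\phi_j'(\cR)\delta_0\|_{L^2}\,\|\kappa_x^{(j)}\|_{L^2}\le C\,2^{jQ/2}\,2^{j(m+Q/2)}\,\|\sigma\|_{S^m_{\rho,\delta},0,0,|m|}.
\]
For the decay one estimates $\|\tilde q_\alpha\kappa_x^{(j)}\|_{L^\infty}$ for $[\alpha]$ large: expanding $\tilde q_\alpha(\phi_j'(\cR)\delta_0*\kappa_x^{(j)})$ by the Leibniz rule for multiplication by polynomials and applying Cauchy--Schwarz bounds it by $\sum_{[\beta_1]+[\beta_2]=[\alpha]}|c_{\beta_1,\beta_2}|\,\|\tilde q_{\beta_2}\phi_j'(\cR)\delta_0\|_{L^2}\,\|\tilde q_{\beta_1}\kappa_x^{(j)}\|_{L^2}$, where $\|\tilde q_{\beta_2}\phi_j'(\cR)\delta_0\|_{L^2}\le C\,2^{j(Q/2-[\beta_2])}$ by rescaling and $\|\tilde q_{\beta_1}\kappa_x^{(j)}\|_{L^2}\le C\,2^{j(m-\rho[\beta_1]+Q/2)}\|\sigma\|_{S^m_{\rho,\delta},a,b,c}$; the latter follows by induction on $[\beta_1]$ from the Leibniz formula \eqref{formula_leibniz}, using $\Delta^{\beta_1}\sigma\in S^{m-\rho[\beta_1]}_{\rho,\delta}$ and the same factorisation/Plancherel device applied termwise, with careful control of the parameter $\gamma$. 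Since $[\beta_2]+\rho[\beta_1]\ge\rho[\alpha]$ (as $\rho\le1$), summing gives $\|\tilde q_\alpha\kappa_x^{(j)}\|_{L^\infty}\le C\,2^{j(Q+m-\rho[\alpha])}\|\sigma\|_{S^m_{\rho,\delta},a,b,c}$; combined with the $L^\infty$ bound and $|y|^{2\nu_o N}=\sum_{[\alpha]=2\nu_o N}c_\alpha\tilde q_\alpha(y)$ this yields the per-piece estimate.

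\textbf{Summation and the main difficulty.} Summing over $j$: for $|y|<1$, split at $j_0$ with $2^{j_0}\sim|y|^{-1/\rho}$; then $\sum_{j\le j_0}2^{j(Q+m)}$ is $\le C|y|^{-(Q+m)/\rho}$, $\le C$, or $\le C\ln(1/|y|)$ according as $Q+m$ is $>0$, $<0$ or $=0$, while $\sum_{j>j_0}2^{j(Q+m)}(2^{j\rho}|y|)^{-M}\le C$ once $M>Q+m$; this proves (1). Moreover the series $\sum_j X^{\beta'}_xX^\beta_y\kappa_x^{(j)}$ converge locally uniformly on $G\times(G\setminus\{0\})$, re-proving smoothness there, and their sum equals $\kappa_x$ in $\cD'(G)$ since $\phi_0(\pi(\cR))+\sum_{1\le j\le J}\phi_j(\pi(\cR))\to\id$ strongly. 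The main obstacle is precisely the sharpness of the per-piece estimate: the exponent $(Q+m)/\rho$ arises only from the constant $2^{j(Q+m)}$ together with the decay scale $2^{-j\rho}$, whereas the naive use of $L^2_s(G)\hookrightarrow L^\infty(G)$, $s>Q/2$, loses a factor $2^{j(s-Q/2)}$; one must genuinely exploit that each $\kappa_x^{(j)}$ is localised at $\cR$-frequency $\sim2^{j\nu}$ (hence the identity $\kappa_x^{(j)}=\phi_j'(\cR)\delta_0*\kappa_x^{(j)}$) and reconcile this with the order-lowering $\Delta^\alpha\colon S^m_{\rho,\delta}\to S^{m-\rho[\alpha]}_{\rho,\delta}$ while tracking the auxiliary parameter $\gamma$ throughout, the inductive $L^2$-bound on $\tilde q_{\beta_1}\kappa_x^{(j)}$ being the most delicate bookkeeping.
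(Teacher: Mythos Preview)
Your treatment of smoothness on $G\times(G\setminus\{0\})$ and of assertion (2) is essentially the paper's argument: multiply the kernel by the polynomial $|y|^{2\nu_o N}=\sum_{[\alpha]=2\nu_o N}c_\alpha\tilde q_\alpha(y)$, observe that this corresponds to applying $\Delta^\alpha$ and hence lowers the order by $\rho\cdot 2\nu_o N$, and then invoke the Plancherel--Sobolev step (the paper's Lemmata~\ref{lem_square_integrable_kernel} and~\ref{lem_kernel_continuous}). The paper is slightly more economical in that it reaches $L^\infty$ directly via $\|\kappa_x\|_{L^\infty}\le C\|(\id+\cR)^{a/\nu}\kappa_x\|_{L^2}$ rather than factoring through an $L^2\times L^2$ Cauchy--Schwarz, but the content is the same.

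For assertion (1), the paper explicitly does \emph{not} give a proof here: after obtaining the weak bound~\eqref{weak_control_kernel} it says ``We will not show this first point because of its argument's length. The complete proof can be found in~\cite{FR}.'' So you are supplying substantially more than the paper does. Your dyadic decomposition $\kappa_x=\sum_j\kappa_x^{(j)}$ with $\kappa_x^{(j)}$ the kernel of $\sigma(x,\pi)\phi_j(\pi(\cR))$, together with the per-piece bound $|\kappa_x^{(j)}(y)|\le C\,2^{j(Q+m)}(1+2^{j\rho}|y|)^{-M}$, is the natural route and is in the spirit of what one expects the monograph argument to be. Two small corrections: in the tail sum $\sum_{j>j_0}2^{j(Q+m)}(2^{j\rho}|y|)^{-M}$ the convergence condition is $\rho M>Q+m$, not $M>Q+m$, and the resulting bound is $\le C\,|y|^{-(Q+m)/\rho}$ (matching the head sum), not $\le C$; neither affects the conclusion. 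The step you flag as most delicate --- the inductive $L^2$ bound $\|\tilde q_{\beta_1}\kappa_x^{(j)}\|_{L^2}\le C\,2^{j(m-\rho[\beta_1]+Q/2)}$ via the Leibniz formula~\eqref{formula_leibniz} and the uniform Schwartz control on the rescaled multiplier kernels from Proposition~\ref{prop_hula} --- is correctly identified and the sketch is sound, though in a full write-up you would want to make the $\gamma$-bookkeeping explicit when inserting $\pi(\id+\cR)^{(m-\rho[\gamma_1])/\nu}$ between $\Delta^{\gamma_1}\sigma$ and $\Delta^{\gamma_2}\phi_j(\pi(\cR))$.
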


For example our operators of order 0 have singularities of the type $|y|^{-Q}$
with $Q$ homogeneous dimension strictly greater than the topological dimension.
Hence the calculus developed here can not coincide with the H\"ormander calculus on $\bR^n$ (abelian).
This contrasts with the compact case: it was shown in \cite{RTW}
that the calculus developed in  \cite{ruzh+turunen_bk2010} on compact Lie groups
leads to the usual H\"ormander operator classes on $\bR^{n}$ extended 
to compact connected manifolds.

\medskip

Before discussing the proof of Proposition \ref{prop_kernel},
let us prove the following couple of easy lemmata.

\begin{lemma}
\label{lem_square_integrable_kernel}
If  $\sigma \in S^m_{\rho,\delta}$ and $a\in \bR$ with $m+a<-Q/2$,
then the distribution $(\id+\cR)^{\frac a\nu} \kappa_x$ 
coincides with a square integrable function for each fixed $x$;
Furthermore, there exists a constant $C=C_{a,m}>0$, 
independent of $\sigma$, such that we have
$$
\forall x\in G\qquad
\|(\id+\cR)^{\frac a\nu}\kappa_x\|_{L^2(G)}\leq C 
\sup_{\pi\in\Gh} \|\sigma(x,\pi)\|_{S^m_{\rho,\delta},0,0,0}
.
$$
\end{lemma}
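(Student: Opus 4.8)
The plan is to use the Plancherel theorem to reduce the $L^2$-norm of $(\id+\cR)^{\frac a\nu}\kappa_x$ to an integral over $\Gh$ of a Hilbert--Schmidt norm, and then to absorb the operator $\pi(\id+\cR)^{\frac a\nu}$ against a fixed power of $\pi(\id+\cR)$ coming from the heat kernel of $\cR$. Concretely, fix $x\in G$ and write, for a parameter $a'$ to be chosen,
\[
(\id+\cR)^{\frac a\nu}\kappa_x
= \left\{(\id+\cR)^{\frac {a+a'}\nu}\kappa_x\right\} * \cB_{a'},
\]
so that on the Fourier side $\pi\left((\id+\cR)^{\frac a\nu}\kappa_x\right) = \pi(\cB_{a'})\,\pi(\id+\cR)^{\frac {a+a'}\nu}\sigma(x,\pi)$ (using the left-invariance and the convolution identities from Subsection \ref{SEC:udft}). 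The idea is to choose $a'$ so that $\cB_{a'}\in L^2(G)$, i.e. $a' > Q/2$ by the discussion in Subsection \ref{SEC:RO}, and simultaneously so that $\pi(\id+\cR)^{\frac{a+a'}\nu}\sigma(x,\pi)$ is a uniformly bounded field; since $\sigma\in S^m_{\rho,\delta}$ the defining estimate with $\alpha=\beta=0$ and $\gamma = a+a'$ gives that $\pi(\id+\cR)^{\frac{-m+a+a'}\nu}\sigma(x,\pi)\pi(\id+\cR)^{-\frac{a+a'}\nu}$ is bounded uniformly in $x,\pi$ — but I actually want the cleaner choice with the trailing factor absent. Better: take $\gamma=0$ in the symbol estimate, which directly says $\pi(\id+\cR)^{\frac{-m}\nu}\sigma(x,\pi)$ is uniformly bounded with norm $\le \|\sigma(x,\pi)\|_{S^m_{\rho,\delta},0,0,0}$.

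So the cleanest route is: set $a' := a - m$ (note $a' > Q/2$ because $m+a < -Q/2$ forces $a - m > a - m$... let me recompute: $m + a < -Q/2$ gives $-m > a + Q/2 \ge$ nothing useful directly, rather $a' := -m - a$? I want $\cB$ of positive index and $L^2$). Take the splitting with index $-m$: since $\Rep(-m) $ need not be positive, instead split as $(\id+\cR)^{\frac a\nu}\kappa_x = \{(\id+\cR)^{\frac m\nu}\kappa_x\} * \cB_{a-m}$ where $a - m > Q/2$ exactly when $m + a < \dots$ — here $m+a<-Q/2<Q/2$ gives $a - m < Q - 2m$, not what I need. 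The correct bookkeeping: I want to peel off a Bessel potential $\cB_s$ with $s > Q/2$ (so $\cB_s\in L^2$) and be left with $(\id+\cR)^{\frac{a-s}\nu}\kappa_x$ whose Fourier transform $\pi(\id+\cR)^{\frac{a-s}\nu}\sigma(x,\pi)$ is uniformly bounded. By the symbol estimate with $\gamma=0$, $\pi(\id+\cR)^{\frac{-m}\nu}\sigma(x,\pi)$ is uniformly bounded, so I need $a - s \le -m$, i.e. $s \ge a+m$; combined with $s > Q/2$ this is satisfiable precisely because $a+m < -Q/2 < Q/2$, so any $s\in(\max(Q/2, a+m),\infty)$ works, e.g. $s = Q$. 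Then
\[
(\id+\cR)^{\frac a\nu}\kappa_x = \left\{(\id+\cR)^{\frac{a-s}\nu}\kappa_x\right\} * \cB_s,
\]
and $\pi(\id+\cR)^{\frac{a-s}\nu}\sigma(x,\pi) = \pi(\id+\cR)^{\frac{a-s+m}\nu}\,\pi(\id+\cR)^{\frac{-m}\nu}\sigma(x,\pi)$ is the product of the bounded multiplier $\pi(\id+\cR)^{\frac{a-s+m}\nu}$ (bounded since $a-s+m\le 0$) and the uniformly bounded field above. Hence $(\id+\cR)^{\frac{a-s}\nu}\kappa_x \in \cK(G)$ with $\|\cdot\|_* \le \|\sigma(x,\pi)\|_{S^m_{\rho,\delta},0,0,0}$, and convolving an $L^2$ function $\cB_s$ with the $\cK(G)$-bounded operator $T$ yields an $L^2$ function with $\|T\cB_s\|_{L^2} \le \|T\|_*\|\cB_s\|_{L^2}$. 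This gives the claimed inequality with $C = \|\cB_s\|_{L^2(G)}$.

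The only genuinely delicate point is the justification that these manipulations are legitimate at the level of distributions: that $(\id+\cR)^{\frac{a-s}\nu}\kappa_x$ really lies in $\cK(G)$ (not merely that its symbol is a bounded field) and that the associativity $\{(\id+\cR)^{\frac{a-s}\nu}\kappa_x\}*\cB_s = (\id+\cR)^{\frac a\nu}\kappa_x$ holds in $\cS'(G)$. This is handled by the remarks at the end of Subsection on $\cK(G)$: a bounded measurable field in $L^\infty(\Gh)$ is exactly $\pi(\kappa)$ for a unique $\kappa\in\cK(G)$, and the Bessel potential $\cB_s$ is the convolution kernel of $(\id+\cR)^{-s/\nu}$, so the identity is the operator identity $(\id+\cR)^{a/\nu} = (\id+\cR)^{-s/\nu}(\id+\cR)^{(a+?)/\nu}$ read through the Fourier transform — one must check the fields involved are genuinely $\mu$-measurable and that the Plancherel isomorphism intertwines everything, which is routine given the spectral calculus for $\cR$ recalled in Subsection \ref{SEC:RO}. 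Finiteness and the stated bound then follow immediately, uniformly in $x$ since the symbol seminorm $\sup_\pi\|\sigma(x,\pi)\|_{S^m_{\rho,\delta},0,0,0}$ is the only $x$-dependent quantity appearing.
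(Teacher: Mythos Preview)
Your underlying idea is the same as the paper's: factor $\pi(\id+\cR)^{a/\nu}\sigma(x,\pi)$ as a Hilbert--Schmidt piece (coming from a Bessel potential of index $>Q/2$) times the uniformly bounded field $\pi(\id+\cR)^{-m/\nu}\sigma(x,\pi)$, whose operator norm is exactly $\|\sigma(x,\pi)\|_{S^m_{\rho,\delta},0,0,0}$. However, your final convolution identity has a sign error. Since $\cB_s$ is the kernel of $(\id+\cR)^{-s/\nu}$ (so $f*\cB_s=(\id+\cR)^{-s/\nu}f$), one has
\[
\bigl\{(\id+\cR)^{\frac{a-s}{\nu}}\kappa_x\bigr\}*\cB_s=(\id+\cR)^{\frac{a-2s}{\nu}}\kappa_x,
\]
not $(\id+\cR)^{a/\nu}\kappa_x$. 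You actually had the correct version earlier, with exponent $a+a'$; with $s$ in place of $a'$ the identity should read $(\id+\cR)^{a/\nu}\kappa_x=\{(\id+\cR)^{(a+s)/\nu}\kappa_x\}*\cB_s$, and then the boundedness condition becomes $a+s+m\le 0$, i.e.\ $s\le -(a+m)$. Combined with $s>Q/2$ this is satisfiable precisely because $-(a+m)>Q/2$, and the natural choice $s=-(a+m)$ gives $C=\|\cB_{-(m+a)}\|_{L^2(G)}$.

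The paper avoids the convolution detour altogether and works directly on the Fourier side via Plancherel and the inequality $\|AB\|_{HS}\le\|A\|_{HS}\|B\|_{op}$:
\[
\|(\id+\cR)^{\frac a\nu}\kappa_x\|_{L^2}^2
=\int_{\Gh}\|\pi(\id+\cR)^{\frac a\nu}\sigma(x,\pi)\|_{HS}^2\,d\mu
\le \sup_\pi\|\pi(\id+\cR)^{-\frac m\nu}\sigma(x,\pi)\|_{op}^2
\int_{\Gh}\|\pi(\id+\cR)^{\frac{m+a}\nu}\|_{HS}^2\,d\mu,
\]
and the last integral equals $\|\cB_{-(m+a)}\|_{L^2}^2$. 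This is a two-line argument with no splitting parameter to choose and no distributional associativity to justify; once you fix the sign, your argument is a spatial-side paraphrase of the same computation.
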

\begin{proof}[Proof of Lemma \ref{lem_square_integrable_kernel}]
By the Plancherel formula and properties of Hilbert-Schmidt operators,
\begin{eqnarray*}
&&\|(\id+\cR)^{\frac a\nu}\kappa_x\|_{L^2(G)}^2 
= \int_{\Gh} \| (\id+\cR)^{\frac a\nu} \sigma(x,\pi) \|_{HS}^2 d\mu(\pi)
\\ 
&&\qquad\leq \sup_{\pi\in \Gh} \| \pi(\id+\cR)^{\frac{-m}\nu} \sigma(x,\pi)\|_{op}^2
 \int_{\Gh} \|  \pi(\id+\cR)^{\frac{m +a}\nu}\|_{HS}^2 d\mu(\pi)
\\ 
&&\qquad\leq 
\sup_{\pi\in \Gh} \|\sigma(x,\pi)\|_{S^m_{\rho,\delta}, 0,0,0}^2 \ 
\| \cB_{-(m+a)}\|_{L^2(G)} ^2
,
\end{eqnarray*}
which is finite by the properties of Bessel potentials (see Subsection 
\ref{SEC:RO}).
\end{proof}

\begin{lemma}
\label{lem_kernel_continuous}
If  $\sigma \in S^m_{\rho,\delta}$ with $m<-Q$,
then  the associated kernel $\kappa_x(y)$
coincides with a continuous function in $y$ for each $x\in G$.
Furthermore, there exists a constant $C=C_{a,m}>0$, independent of $\sigma$,
such that we have
$$
\forall x,y\in G\qquad
|\kappa_x(y)| \leq C 
\sup_{\pi\in\Gh} \|\sigma(x,\pi)\|_{S^m_{\rho,\delta},0,0,0}
,
$$
\end{lemma}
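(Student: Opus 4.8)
The plan is to deduce Lemma \ref{lem_kernel_continuous} from Lemma \ref{lem_square_integrable_kernel} combined with the Sobolev embedding of Lemma \ref{lem_sob_ineq}. The key observation is that, since $m < -Q$, we can split the ``loss of regularity'' into two pieces: one piece of size just above $Q/2$ used to make the kernel square-integrable (via Lemma \ref{lem_square_integrable_kernel}), and a further piece of size just above $Q/2$ used to run the Sobolev inequality. More precisely, I would pick a real number $a$ with $Q/2 < a$ and also $m + a < -Q/2$; this is possible exactly because $m < -Q$, so the interval $(Q/2,\, -Q/2 - m)$ is non-empty. Fix such an $a$.

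First I would apply Lemma \ref{lem_square_integrable_kernel} with this choice of $a$: since $m + a < -Q/2$, the distribution $(\id+\cR)^{\frac a\nu}\kappa_x$ coincides, for each fixed $x$, with a function in $L^2(G)$, and
$$
\|(\id+\cR)^{\frac a\nu}\kappa_x\|_{L^2(G)} \leq C_{a,m} \sup_{\pi\in\Gh} \|\sigma(x,\pi)\|_{S^m_{\rho,\delta},0,0,0}.
$$
Equivalently, $\kappa_x \in L^2_a(G)$ with $\|\kappa_x\|_{L^2_a(G)} \leq C_{a,m} \sup_\pi \|\sigma(x,\pi)\|_{S^m_{\rho,\delta},0,0,0}$, by the very definition of the Sobolev norm in Subsection \ref{subsec_sobolev_spaces}. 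Next, since $a > Q/2$, Lemma \ref{lem_sob_ineq} applies to the function $\kappa_x$: it admits a continuous bounded representative with
$$
\|\kappa_x\|_{L^\infty(G)} \leq C_a \|\kappa_x\|_{L^2_a(G)}.
$$
Chaining the two estimates gives, for every $x, y \in G$,
$$
|\kappa_x(y)| \leq C_a\, C_{a,m} \sup_{\pi\in\Gh} \|\sigma(x,\pi)\|_{S^m_{\rho,\delta},0,0,0},
$$
which is the asserted inequality with $C = C_a C_{a,m}$ depending only on $a$ and $m$ (and $a$ itself is a function of $m$), hence independent of $\sigma$. Continuity of $y\mapsto \kappa_x(y)$ is exactly the statement that $\kappa_x$ has a continuous representative, which is part of the conclusion of Lemma \ref{lem_sob_ineq}.

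I do not expect any real obstacle here; the only point requiring a moment's care is checking that the interval for $a$ is non-empty, i.e. that the hypothesis $m < -Q$ is exactly what is needed to accommodate both a ``Plancherel loss'' of more than $Q/2$ (to land in $L^2$) and a ``Sobolev loss'' of more than $Q/2$ (to land in $L^\infty$), for a total strictly greater than $Q$. One should also note that the statement as typed refers to $C=C_{a,m}$ although $a$ does not appear as a free parameter in the lemma; this is harmless since $a$ is chosen as an explicit function of $m$, and one may simply write $C = C_m$. It is worth remarking in passing that the same argument applied with the roles of $\cR$ and $\tilde\cR$, or with $X^\beta_x$ differentiations brought in via the first properties of the symbols in Subsection \ref{subsec_1prop_symbols}, will later yield the smoothness and decay statements of Proposition \ref{prop_kernel}; but for the present lemma the bare combination of Lemma \ref{lem_square_integrable_kernel} and Lemma \ref{lem_sob_ineq} suffices.
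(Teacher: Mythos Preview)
Your proof is correct and follows exactly the paper's approach: apply the Sobolev inequality (Lemma~\ref{lem_sob_ineq}) to pass from an $L^2_a$ bound to an $L^\infty$ bound, and obtain the $L^2_a$ bound from Lemma~\ref{lem_square_integrable_kernel}. Your choice of $a$ in the non-empty interval $(Q/2,\,-Q/2-m)$ is in fact cleaner than the paper's stated choice $a=(Q+m)/2$, which appears to be a typo (it is negative when $m<-Q$ and so does not satisfy $a>Q/2$); the midpoint $a=-m/2$, for instance, works.
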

\begin{proof}
By Lemma \ref{lem_sob_ineq}, 
$$
\|\kappa_x\|_{L^\infty(G)}
\leq C_a
\|(\id+\cR)^{\frac a \nu} \kappa_x\|_{L^2(G)}
\quad\mbox{where}\quad
a= \frac {Q+m}2<-\frac Q2
,
$$ 
and we conclude by 
Lemma \ref{lem_square_integrable_kernel}.
\end{proof}

Consequently, 
for any $\sigma\in S^m_{\rho,\delta}$ with kernel $\kappa_x$,
we can apply Lemma \ref{lem_kernel_continuous}
to the symbol
$$
X^{\beta_o}_x \Delta^{\alpha_o} \pi(X)^\beta \sigma \pi(X)^{\tilde \beta} 
\in S^{m+[\beta]+[\tilde\beta] +\delta[\beta_o]-\rho[\alpha_o]}_{\rho,\delta} 
.
$$
The  kernel is given by
$\tilde X^{\tilde \beta} X^\beta X^{\beta_o}_x \tilde q_{\alpha_o}\kappa_x$  (see  Subsection \ref{subsec_1prop_symbols})
and, if $m+[\beta]+[\tilde\beta] +\delta[\beta_o]-\rho[\alpha_o] <- Q$,
it coincides with a continuous and bounded function,
$$
\forall x,y\in G\quad
|\tilde X^{\tilde \beta}_y X^\beta_y X^{\beta_o}_x \tilde q_{\alpha_o}\kappa_x(y)|
\leq
C \sup_{\pi\in \Gh}\|\sigma(x,\pi)\|_{S^m_{\rho,\delta}, a,b,c}
$$
with $a=[\alpha_o]$,
$b=[\beta_o]$,
$c=\rho [\alpha_o]+ \delta[\beta_o]+ [\tilde \beta]$.

Hence if $\rho>0$ then the kernel of a symbol in $S^m_{\rho,\delta}$
is smooth on $G\times (G\backslash \{0\})$.

Let $|\cdot|_{\nu_o}$ be the homogeneous norm 
defined in 
\eqref{eq_norm_||nuo}
for $\nu_o$ the smallest common multiple of the $\upsilon_j$.
Clearly, for any $p\in \bN$,
 $|\cdot|_{\nu_o}^{2\nu_op}$ is a homogeneous polynomial of degree $p2\nu_o$
and thus can be written as a linear combination of $q_\alpha$ with $[\alpha]=2\nu_op$
(see Lemma \ref{lem_property_qalpha}).
Thus
\begin{eqnarray}
|y|_{\nu_o}^{2\nu_op} |\kappa_x(y)|
&=&
|y^{-1}|_{\nu_o}^{2\nu_op} |\kappa_x(y)|
\nonumber
\\
&\leq &C \sup_{[\alpha ]=2\nu_op}
\sup_{z\in G} |q_\alpha(z) |_{\nu_o}^{2\nu_o} |\kappa_x(g)|
\nonumber
\\
&\leq&
C \sup_{\pi\in \Gh}\|\sigma(x,\pi)\|_{S^m_{\rho,\delta}, a,b,c}
\label{weak_control_kernel}
\end{eqnarray}
as long as $m-\rho 2\nu_o p>Q$. 
Here $C=C_{m,p}$ is independent of $\sigma$.

The estimate \eqref{weak_control_kernel} proves the second point in Proposition \ref{prop_kernel} and is a weaker version of the first.
We will not show this first point 
because of its argument's length. 
The complete proof 
can be found in  \cite{FR}.

\subsection{A pseudo-differential operator as a limit of `nice' operators}

The definition of symbols presented above leads to kernels $\kappa_x$
in the distributional sense
and it is often needed to assume that the kernels are `nice' functions.
In this subsection we explain how we proceed to do so.

We fix a non-negative function 
$\chi_o\in \cD(\bR)$ 
supported in $[1/4,4]$  such that $\chi_o\equiv 1$ on $[1/2,2]$.
For any $\epsilon>0$, 
we write $\chi_\epsilon(x)=\chi_o(\epsilon |x|_{\nu_o}^{2\nu_o})$
where $|\cdot|_{\nu_o}$ is the homogeneous norm 
defined in \eqref{eq_norm_||nuo}
for $\nu_o$ the smallest common multiple of the $\upsilon_j$.
Cleary $\chi_\epsilon\in \cD(G)$.

We denote by $|\pi|$ a `norm' on $\Gh$, 
for example the distance between the co-adjoint orbits of $\pi$ and 1.

By definition, the $\cH_\pi$'s, $\pi\in \Gh$, 
form  a field of Hilbert spaces for the Plancherel measure. 
So we can choose a generating sequence of vectors on $\cH_\pi$
 depending measurably on $\pi$. 
We denote by $\pr_\epsilon$ the orthogonal projection on the $\lceil \epsilon^{-1}\rceil$-th first vectors of this sequence.

Let $\sigma\in S^m_{\rho,\delta}$. 
We consider for any $\epsilon\in (0,1)$, the operator
$$
\sigma_\epsilon(x,\pi) :=\chi_\epsilon(x) 1_{|\pi|\leq \epsilon}
\sigma(x,\pi) \pr_\epsilon
.
$$
Clearly $\sigma_\epsilon\in S^m_{\rho,\delta}$
and for any $a,b,c\in \bN_0$ there exists $C=C_{m,a,b,c}>0$ such that,
$$
\|\sigma_\epsilon\|_{S^m_{\rho,\delta},a,b,c}\leq C
\|\sigma\|_{S^m_{\rho,\delta},a,b,c}.
$$

The corresponding kernel is
$$
\kappa_\epsilon(x,y) =\chi_\epsilon (x) \int_{|\pi|\leq \epsilon}
\tr \left( \sigma(x,\pi) \pr_\epsilon \right) d\mu(\pi)
,
$$
which is smooth in $x$ and $y$ and compactly supported in $x$.
From Proposition \ref{prop_kernel}, $\kappa_{\epsilon,x}$ decays rapidly at infinity in $y$ uniformly in $x$. Furthermore,
point-wise for $ x\in G$ and $y\in G\backslash\{0\}$, 
or in the sense of $\cS'(G)$-distribution for each $x\in G$,
we have the convergence
$\kappa_{\epsilon,x}(y) \underset {\epsilon\rightarrow 0}  \longrightarrow 
\kappa_x(y)$.

Let $T_\epsilon =Op(\sigma_\epsilon)$ be the corresponding operators.
For any $f\in \cS(G)$, $T_\epsilon f\in \cD(G)$ and
$$
Tf(x) = \lim_{\epsilon\rightarrow 0}T_\epsilon f(x)
\quad\mbox{where}\quad T=Op(\sigma)
.
$$

\medskip

In the proofs of the rest of the paper, 
we will assume that the kernels of the operators are sufficiently regular 
and compactly supported in $y$
so that all the performed operations, 
e.g. composition of operators, convolution of kernels, 
group Fourier transform of kernels etc... make sense. 
This can be made rigorous via the procedure described above
since we will always obtain controls in $S^m_{\rho,\delta}$-semi-norms. 

\subsection{Composition}
\label{subsec_composition}

We want to prove
\begin{theorem}
\label{thm_composition}
Let $1\geq \rho\geq \delta \geq 0$ with $\rho\not=0$ and $\delta\not=1$.
If $T_1\in \Psi^{m_1}_{\rho,\delta}$ and $T_2\in \Psi^{m_2}_{\rho,\delta}$
then $T_1T_2\in \Psi^{m}_{\rho,\delta}$ with $m=m_1+m_2$.
\end{theorem}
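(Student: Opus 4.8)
The plan is to reduce the statement to a symbolic calculus on the level of symbols, deriving an asymptotic expansion for the symbol of the composition $T_1T_2$. Writing $\sigma_1\in S^{m_1}_{\rho,\delta}$, $\sigma_2\in S^{m_2}_{\rho,\delta}$ for the symbols of $T_1,T_2$ with kernels $\kappa_{1,x},\kappa_{2,x}$, I would first express the kernel of $T_1T_2$. Since $T_if(x)=f*\kappa_{i,x}(x)$, a direct computation (legitimate thanks to the regularisation procedure of the previous subsection, which lets us assume the kernels are Schwartz and compactly supported in $y$, with all bounds controlled by $S^m_{\rho,\delta}$-seminorms) shows that $T_1T_2$ has kernel
$$
\kappa_x(y)=\int_G \kappa_{2,z}(z^{-1}x)\,\big(\tau_{z}\kappa_{1,x}\big)(\cdots)\,dz,
$$
i.e. a convolution in which the $x$-dependence of $\kappa_1$ sits at the ``wrong'' point $x$ while the composition wants it at the running point $z$. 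The idea — exactly as in the Euclidean Kohn--Nirenberg calculus — is to Taylor expand $z\mapsto \kappa_{1,z}$ (equivalently $z\mapsto\sigma_1(z,\pi)$) around $z=x$ using the graded Taylor expansion of Proposition \ref{prop_Taylor}. On the symbol side this produces the candidate expansion
$$
\sigma_{T_1T_2}(x,\pi)\ \sim\ \sum_{\alpha}\frac1{\alpha!}\,\Delta^\alpha\sigma_1(x,\pi)\,X_x^\alpha\sigma_2(x,\pi),
$$
where $\Delta^\alpha$ is the difference operator dual to $X^\alpha$ and $X_x^\alpha$ the left-invariant derivative in $x$; the point of $\Delta^\alpha$ here is precisely that it implements, on the Fourier side, multiplication of the kernel by the Taylor monomial $\tilde q_\alpha$.

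The key steps, in order, would be: (i) establish the kernel identity for $T_1T_2$ rigorously on the dense regularised symbols; (ii) insert the $N$-th order graded Taylor expansion of $\kappa_{1,z}$ at $z=x$, producing a main sum of ``good'' terms plus a remainder term built from $R^{(\kappa_1)}_{x,N}$; (iii) check that each term $\Delta^\alpha\sigma_1\cdot X_x^\alpha\sigma_2$ in the main sum lies in $S^{m_1+m_2-(\rho-\delta)[\alpha]}_{\rho,\delta}$, which follows from the composition property for symbols of item (4) in Subsection \ref{subsec_1prop_symbols} together with the facts that $\Delta^\alpha$ lowers order by $\rho[\alpha]$ and $X_x^\alpha$ raises it by $\delta[\beta]$; since $\rho>\delta$ (actually $\rho-\delta\ge 0$, and one needs $\rho\not=0$, $\delta\not=1$ to make the gain effective and the $\gamma$-shifts work), the orders decrease, so the sum is an asymptotic one and its partial sums are in $S^{m}_{\rho,\delta}$; (iv) bound the remainder: using the Taylor estimate of Proposition \ref{prop_Taylor} (in its Banach-valued form, Remark \ref{rem_vector_valued_taylor}, applied to $z\mapsto\pi(\mathrm{Id}+\cR)^{s}\sigma_1(z,\pi)\pi(\mathrm{Id}+\cR)^{-s}$) one controls the remainder kernel by top-order $x$-derivatives of $\kappa_1$ against high powers of $|y|$, i.e. by $S^m_{\rho,\delta}$-seminorms of $X_x^\beta\Delta^\alpha\sigma_1$ with $[\beta]$ large, and concludes that the remainder operator lies in $S^{m_1+m_2-N(\rho-\delta)}_{\rho,\delta}$, hence in $S^{m}_{\rho,\delta}$ for $N=0$ and in $S^{m-\varepsilon}_{\rho,\delta}$ once $N\ge1$.

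Once (i)--(iv) are in place one has $\sigma_{T_1T_2}\in S^{m}_{\rho,\delta}$ with the stated order $m=m_1+m_2$, and since $Op$ is a bijection between symbols and operators this gives $T_1T_2=Op(\sigma_{T_1T_2})\in\Psi^m_{\rho,\delta}$; the quantitative seminorm bounds fall out of the chain of estimates above. The main obstacle, I expect, is step (iv): unlike the Euclidean or compact case, $\pi(\mathrm{Id}+\cR)$ is not scalar, so one cannot freely commute the weights $\pi(\mathrm{Id}+\cR)^{s}$ past $\Delta^\alpha$ and past $\sigma_2$; the estimates for the remainder must be organised so that each weight is absorbed into an adjacent factor using exactly the flexibility built into the symbol classes via the auxiliary parameter $\gamma$ (this is the reason $\gamma$ was introduced, as noted after the definition of $S^m_{\rho,\delta}$), and one must be careful that the intermediate quantities — convolutions $\kappa_{2,z}*(\tilde q_\alpha\kappa_{1,x})$ and their $\cR$-Sobolev norms — are finite, which is where the regularisation of the preceding subsection and Lemma \ref{lem_square_integrable_kernel}/Lemma \ref{lem_kernel_continuous} do the bookkeeping. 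A secondary technical point is justifying the interchange of the $d\mu(\pi)$ integrals with the Taylor expansion and the $z$-integration, again handled on the regularised symbols with uniform $S^m_{\rho,\delta}$ control and a limiting argument.
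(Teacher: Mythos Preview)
Your overall strategy matches the paper's: compute the composition kernel, Taylor expand in the $x$-variable, identify the main terms as products $\Delta^\alpha\sigma_1\cdot X_x^\alpha\sigma_2$ via the symbol algebra of Subsection \ref{subsec_1prop_symbols}, and bound the remainder. Two issues, one a confusion and one a genuine gap.

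First, you have the roles of $\kappa_1$ and $\kappa_2$ reversed. The kernel of $T_1T_2$ is
$$
\kappa_x(w)=\int_G \kappa_2(xz^{-1},wz^{-1})\,\kappa_1(x,z)\,dz,
$$
so it is the first argument of $\kappa_2$ that sits at the ``wrong'' point $xz^{-1}$; one Taylor expands $\kappa_2$ (equivalently $\sigma_2$) in its $x$-variable, not $\kappa_1$. The monomials $\tilde q_\alpha(z)$ then land on $\kappa_1$, producing $\Delta^\alpha\sigma_1$, while the derivatives $X_x^\alpha$ hit $\sigma_2$. You wrote the correct final expansion, but your description of how one arrives at it is internally inconsistent, and in step (iv) you again propose applying the vector-valued Taylor estimate to $\sigma_1$ when it must be applied to $\sigma_2$. (Minor: the $1/\alpha!$ is not right on a graded group; the combinatorics are absorbed into the dual basis $q_\alpha$ and hence into $\Delta^\alpha$.)

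The substantive gap is in step (iv). The remainder bound cannot be closed with Lemmas \ref{lem_square_integrable_kernel} and \ref{lem_kernel_continuous} alone. After the Taylor estimate one must still handle the non-scalar weights: the paper inserts $\pi(\mathrm{Id}+\cR)^{e_o/\nu}$, writes $\pi(z)^*\pi(\mathrm{Id}+\cR)^{e_1}$ (for $e_1\in\bN_0$) as a finite sum of $X_z^{\beta'}$ acting on $\pi(z)^*$, and integrates by parts, which throws extra $X^{\beta'}$ derivatives onto the $\kappa_1$ factor. The operator norm of the remainder is then controlled by integrals of the form
$$
\int_G \bigl|X^{\beta'}\!\bigl(\tilde q_{\alpha'}X_x^{\beta''}\kappa_{1,x}\bigr)(z)\bigr|\,|z|^{N}\,dz,
$$
and convergence of these near $z=0$ requires the pointwise Calder\'on--Zygmund estimate $|\kappa_{1,x}(z)|\le C|z|^{-(Q+m_1)/\rho}$ of Proposition \ref{prop_kernel} (and its analogues with derivatives and $\tilde q_\alpha$ factors). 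This is precisely Result (R4), and is why the paper states that (R2) depends on (R4). The $L^2$ and $L^\infty$ kernel bounds you cite do not control these weighted $L^1$ integrals; you need the pointwise behaviour at the origin, and should make that dependency explicit in your argument.
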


Let us start with some formal considerations.
Denoting $\sigma_j$ and $\kappa_j$ the symbol and kernel of $T_j$ for $j=1,2$,
it is not difficult to compute the following expression for the composition $T=T_1T_2$,
$$
Tf(x) = \int_G\int_G f(z) \kappa_2(y,z^{-1}y)\kappa_1(x,y^{-1} x) dy dz
.
$$
Thus the kernel of $T$ is 
$$
\kappa_x(w) 
=\int_G\kappa_2(xz^{-1},wz^{-1}) \kappa_1(x,z) dz
.
$$
Using the Taylor expansion for $\kappa_2$ in its first variable,
we have
$$
\kappa_2(xz^{-1},\cdot ) \approx \sum_\alpha \tilde q_\alpha (z) X^\alpha_x 
\kappa_{2x} (\cdot)
\quad\mbox{thus}\quad
\kappa_x(w) \approx \sum_\alpha  X^\alpha_x\kappa_{2x} * \tilde q_\alpha\kappa_1(w)
.
$$
Denoting $\sigma$ the group Fourier transform of $\kappa$, 
we have
$$
\sigma(x,\pi):=\pi(\kappa_x)\approx
\sum_\alpha  \Delta^\alpha \sigma_1(x,\pi) \
X^\alpha_x \sigma_2(x,\pi) .
$$
From Subsection \ref{subsec_1prop_symbols}, 
we know that 
$$
\sum_{[\alpha]\leq M} 
\Delta^\alpha \sigma_1(x,\pi) \
X^\alpha_x \sigma_2(x,\pi) \in S^{m-(\rho-\delta)M}_{\rho,\delta}
.
$$
Hence the main problem is to control the remainder coming from the use of the Taylor expansion;
this is the object of the following lemma.
\begin{lemma}
\label{lem_thm_composition}
We keep the notation defined just above and set
$$
\tau_M(x,\pi):=\sigma(x,\pi) -\sum_{[\alpha]\leq M} 
\Delta^\alpha \sigma_1(x,\pi) \
X^\alpha_x \sigma_2(x,\pi)
.
$$
Let $\beta,\tilde\beta,\beta_o,\alpha_o\in \bN_0^n$.
Then there exists $M_o\in \bN_0$ such that for any integer $M>M_o$, 
there exist $C>0$ and computable integers $a_1,b_1,c_1,a_2,b_2,c_2$ 
(independent of $\sigma_1$ and $\sigma_2$) such that we have
$$
\|\pi(X)^{\beta} \left\{ X_x^{\beta_o} \Delta^{\alpha_o} 
\tau_M
\right\} \pi(X)^{\tilde \beta} \|_{op}
\leq
C
\|\sigma_1\|_{S^{m_1}_{\rho,\delta},a_1,b_1,c_1}
\|\sigma_2\|_{S^{m_2}_{\rho,\delta},a_2,b_2,c_2}
.
$$
\end{lemma}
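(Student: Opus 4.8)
\textbf{Proof plan for Lemma \ref{lem_thm_composition}.}

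The plan is to make rigorous the heuristic expansion preceding the statement by carefully tracking the Taylor remainder. First I would reduce to the case $\beta_o=\alpha_o=0$ and $\tilde\beta=0$: the operator $X^{\beta_o}_x$ can be distributed via the Leibniz rule for differentiation in $x$ onto the two factors $\sigma_1,\sigma_2$, the difference operator $\Delta^{\alpha_o}$ can be handled via the Leibniz formula \eqref{formula_leibniz} together with the fact (Lemma \ref{lem_property_qalpha}) that $\Delta^{\alpha_1}\Delta^{\alpha_2}$ is a combination of $\Delta^\alpha$ with $[\alpha]=[\alpha_1]+[\alpha_2]$, and the right-invariant derivatives $\pi(X)^{\tilde\beta}$ can be absorbed into $\sigma_2$ using point (5) of Subsection \ref{subsec_1prop_symbols}; in each case the order of the relevant remainder symbol only shifts by a fixed amount, so it suffices to estimate $\|\pi(X)^\beta\tau_M\|_{op}$ for $\tau_M$ as written (after relabelling $m_1,m_2$). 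Next, working with the regularised kernels as described in the preceding subsection so that all convolutions and Fourier transforms are legitimate, I would write $\kappa_x(w)=\int_G \kappa_2(xz^{-1},wz^{-1})\,\kappa_1(x,z)\,dz$ and apply the left Taylor expansion (Proposition \ref{prop_Taylor}, in the vector-valued form of Remark \ref{rem_vector_valued_taylor}) to the function $y\mapsto \kappa_2(xy,w\cdot)$ at the point $x$, evaluated at $y=z^{-1}$. Since $P_{x,M}^{(\kappa_2(\cdot,w\cdot))}=\sum_{[\alpha]\le M}X_x^\alpha\kappa_{2x}(w\cdot)\,q_\alpha$ and $q_\alpha(z^{-1})=\tilde q_\alpha(z)$, the main terms integrate against $\kappa_1(x,z)$ to give exactly $\sum_{[\alpha]\le M}X_x^\alpha\kappa_{2x}*\tilde q_\alpha\kappa_{1x}(w)$, whose Fourier transform is the claimed sum $\sum_{[\alpha]\le M}\Delta^\alpha\sigma_1(x,\pi)\,X_x^\alpha\sigma_2(x,\pi)$. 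Hence $\tau_M(x,\pi)=\pi\big(w\mapsto\int_G R_{x,M}^{(\kappa_2)}(z^{-1};w)\,\kappa_1(x,z)\,dz\big)$ where $R_{x,M}$ is the Taylor remainder.

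The core of the argument is then to estimate this remainder term in operator norm. I would use the basic principle (Lemma \ref{lem_kernel_continuous} applied to suitable derived symbols, together with the Sobolev embedding Lemma \ref{lem_sob_ineq} and the Plancherel bound of Lemma \ref{lem_square_integrable_kernel}) that $\|\pi(X)^\beta\pi(\kappa)\|_{op}$ is controlled by $\|\kappa\|_{L^1}$-type quantities of $\kappa$ and its derivatives once enough smoothing is inserted — concretely, $\|\pi(X)^\beta\sigma(x,\pi)\|_{op}\le\|(\id+\tilde\cR)^{-N/\nu}(\text{weight})\cdot(\id+\cR)^{N'/\nu}\kappa_x\|_{L^1}$ for large $N,N'$ chosen so that Bessel potentials are integrable. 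Thus I need an $L^1(dw)$ estimate, uniform in $x$, on the relevant $\cR$-powers and $X^\beta_w$-derivatives and polynomial-weights of $\int_G R_{x,M}^{(\kappa_2)}(z^{-1};w)\,\kappa_1(x,z)\,dz$. The Taylor estimate in Proposition \ref{prop_Taylor}(2) bounds $R_{x,M}^{(\kappa_2)}(z^{-1};w)$ by $\sum_{[\alpha]>M,\,|\alpha|\le\lceil M\rfloor}|z|^{[\alpha]}\sup_{|z'|\le b^{M+1}|z|}|X_x^\alpha\kappa_2(xz',w\cdot)|$; one then splits the $z$-integral into $|z|\le 1$ and $|z|\ge 1$, uses the kernel size/decay estimates of Proposition \ref{prop_kernel} for both $\kappa_1$ (in the $z$ variable) and $X_x^\alpha\kappa_2$ (in the $w$ variable), and the factor $|z|^{[\alpha]}$ with $[\alpha]>M$ is what provides the gain: each extra $X_x^\alpha$ on $\kappa_2$ costs at most $\delta[\alpha]$ in order while the accompanying $|z|^{[\alpha]}$ pairs with $\tilde q_\alpha$-type weights that, via the kernel estimates for $\kappa_1\in S^{m_1}_{\rho,\delta}$, gain $\rho[\alpha]$; since $\rho>\delta$ is false only at $\rho=\delta$ — where instead one exploits $[\alpha]>M$ directly to push the remainder order below the threshold $-Q-[\beta]-\dots$ needed to apply Lemma \ref{lem_kernel_continuous} — choosing $M>M_o$ large enough makes everything converge. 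The resulting bound is a product of finitely many $S^{m_1}_{\rho,\delta}$- and $S^{m_2}_{\rho,\delta}$-seminorms of $\sigma_1,\sigma_2$ with explicit, computable orders $a_i,b_i,c_i$.

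I expect the main obstacle to be bookkeeping: correctly identifying how each of the operators $\pi(X)^\beta$, $X_x^{\beta_o}$, $\Delta^{\alpha_o}$, $\pi(X)^{\tilde\beta}$ interacts with the Taylor remainder and shifts the effective orders, and then carefully choosing the smoothing exponents $N,N'$ and the Taylor order $M_o$ so that (a) the Bessel potentials involved are $L^1$, (b) the kernel estimates of Proposition \ref{prop_kernel} apply in the claimed regimes, and (c) the $z$-integral against $\kappa_1$ and the $w$-integral against the weighted $\kappa_2$-derivatives both converge, uniformly in $x$. The non-commutativity of the group forces care with the Taylor expansion being in the \emph{first} variable of $\kappa_2$ while the convolution is in the second, and with the appearance of $\tilde q_\alpha$ rather than $q_\alpha$; but all of this is handled by Lemma \ref{lem_property_qalpha} and the identity $q_\alpha(z^{-1})=\tilde q_\alpha(z)$. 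A further technical point is that the seminorm estimate in Subsection \ref{subsec_1prop_symbols}(4) is only for the \emph{finite} sum $\sum_{[\alpha]\le M}$, so the honest content is entirely in the remainder, as isolated above.
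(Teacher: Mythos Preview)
Your overall architecture is right and matches the paper: isolate the Taylor remainder of $x'\mapsto\kappa_2(xx',\cdot)$, pair it against $\kappa_1(x,z)$, and use the gain $|z|^{[\alpha]}$ with $[\alpha]>M$ together with the kernel estimates of Proposition~\ref{prop_kernel} to make the $z$-integral converge for $M$ large. The reductions you propose (Leibniz for $X_x^{\beta_o}$ and $\Delta^{\alpha_o}$, absorbing $\pi(X)^{\tilde\beta}$ into $\sigma_2$) are essentially what the paper does, though it carries all indices simultaneously rather than reducing to a special case.

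The genuine gap is your mechanism for turning this into an operator-norm bound. You propose to control $\|\pi(X)^\beta\tau_M\|_{op}$ by an $L^1(dw)$ norm of the remainder kernel. After the change of variable $w\mapsto wz$ the $w$-integral decouples and you are forced to need $\int_G |X^\beta_w X_x^\alpha\kappa_{2,x'}(w)|\,dw<\infty$; by Proposition~\ref{prop_kernel} this requires $m_2+[\beta]+\delta[\alpha]<(\rho-1)Q\le 0$, which fails for general $\beta$ and for the $[\alpha]>M$ appearing in the Taylor sum. The displayed inequality you wrote, bounding $\|\pi(X)^\beta\sigma\|_{op}$ by an $L^1$ norm with inserted Bessel potentials, is not valid as stated and no such insertion rescues the $L^1(dw)$ route. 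The paper avoids integrating in $w$ altogether: it first takes the group Fourier transform of the remainder kernel, obtaining
\[
\tau_M(x,\pi)=\int_G \kappa_{1,x}(z)\,\pi(z)^*\,R^{(\sigma_2(x\cdot,\pi))}_{0,M}(z^{-1})\,dz,
\]
with the Taylor remainder now applied to the \emph{operator-valued} map $x'\mapsto\sigma_2(xx',\pi)$ (Remark~\ref{rem_vector_valued_taylor}). One then inserts $\pi(\id+\cR)^{e_o/\nu}$ and its inverse between $\pi(z)^*$ and the remainder, choosing $e_o$ so that $\|\pi(\id+\cR)^{e_o/\nu}\,R^{(\ldots)}\,\pi(X)^{\tilde\beta}\|_{op}$ is bounded directly by an $S^{m_2}_{\rho,\delta}$-seminorm of $\sigma_2$ via Proposition~\ref{prop_Taylor}. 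The compensating factor $\pi(z)^*\pi(\id+\cR)^{e_1}$ (with $e_1\in\bN_0$) is rewritten, since $\cR$ is a left-invariant differential operator, as a finite sum of $(X_z^{\beta'}\pi(z))^*$; integrating by parts throws these $z$-derivatives onto $\kappa_1$. Only the $z$-integral now has to converge, and that is exactly what your $|z|^{[\gamma]}$ gain plus Proposition~\ref{prop_kernel} for $\kappa_1$ provides once $M>M_o$. The $\sigma_2$-side never leaves the operator-norm world, so its kernel is never asked to be integrable.
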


\begin{proof}[Proof of Lemma \ref{lem_thm_composition}]
Proceeding as in the proof of \eqref{formula_leibniz}, we have:
\begin{eqnarray*}
&&
\tilde q_{\alpha_o}(y)\left(\kappa_x -\sum_{[\alpha]\leq M}
(\tilde q_\alpha \kappa_{1,x})*
(X_x^\alpha \kappa_{2,x})\right)
\\&&\quad=
\sum^{--}_{[\alpha_{o1}]+[\alpha_{o2}]=[\alpha_o]}
\int_G 
 \tilde q_{\alpha_{o1}}(z) \kappa_{1,x}(z)
 \ R^{(\tilde q_{\alpha_{o2}}\kappa_{2,x\cdot})(yz^{-1})}_{0,M}(z^{-1})
\ dz.
 \end{eqnarray*}
 where the sign $\overset{--}\sum$ means linear combination,
here over $[\alpha_{o1}]+[\alpha_{o2}]=[\alpha_o]$, 

Consequently,
$\pi(X)^{\beta_1} X_x^{\beta_o} \Delta^{\alpha_o}
\tau_M(x,\pi)
\pi(X)^{\beta_2}$
is the group Fourier transform of the function of $y$ given by
\begin{eqnarray}
&&\tilde X^{\beta_2}_y 
X^{\beta_1}_y 
X^{\beta_o}_x
\!\!\!\!\!\!\!\!\!\!\!\!
\sum^{--}_{[\alpha_{o1}]+[\alpha_{o2}]=[\alpha_o]}
\!\!
\int_G 
 \tilde q_{\alpha_{o1}}(z) \kappa_{1,x}(z) \
 R^{(\tilde q_{\alpha_{o2}}\kappa_{2,x\cdot})(yz^{-1})}_{0,M}(z^{-1})
\ dz\nonumber
\\&&\quad = 
\!\!\!\!\!\!\!\!\!
\sum^{--}_{\substack{
[\alpha_{o1}]+[\alpha_{o2}]=[\alpha_o]\\
[\beta_{o1}]+[\beta_{o2}]=[\beta_o]}}
\int_G 
 \tilde q_{\alpha_{o1}}(z) X^{\beta_{o2}}_x\kappa_{1,x}(z)\times
 \nonumber
 \\&&\qquad\qquad\qquad\times
 R^{\tilde X^{\beta_2}_y 
X^{\beta_1}_y (\tilde q_{\alpha_{o2}}X^{\beta_{o1}}_x\kappa_{2,x\cdot})(yz^{-1})}_{0,M}(z^{-1})
\ dz. 
 \label{eq_pf_lem2_thm_product_1}
 \end{eqnarray}
 After some manipulations  involving integration by parts and Leibniz rules, 
we obtain that 
$\pi(X)^{\beta_1} X_x^{\beta_o} \Delta^{\alpha_o}
\tau_M(x,\pi)
\pi(X)^{\beta_2}$ is a linear combination 
over
$[\alpha_{o1}]+[\alpha_{o2}]=[\alpha_o]$,
$[\beta_{o1}]+[\beta_{o2}]=[\beta_o]$ and
$[\beta_{11}]+[\beta_{12}]=[\beta_1]$ of 
\begin{equation}
\int_G 
 X^{\beta_{12}}_{z_2=z}\tilde q_{\alpha_{o1}}(z_2) X^{\beta_{o2}}_x\kappa_{1,x}(z_2) \pi(z)^*
  R^{(
(X^{\beta_{o1}}_x X_{x'}^{\beta_{11}} \Delta^{\alpha_{o2}}\sigma_2(xx',\pi)
\pi(X)^{\beta_2})}_{x'=0,M-[\beta_{11}]}(z^{-1})
  dz,
 \label{eq_pf_lem2_thm_product_mainexpression1}
\end{equation}
  where we have extended the notation of the Taylor remainder to accept vector valued functions. 
 The adapted statement of Taylor's estimates given in Proposition \ref{prop_Taylor}
 remains valid. 

We consider each integral \eqref{eq_pf_lem2_thm_product_mainexpression1}
and insert 
$\pi(\id+\cR)^{\frac{e_o}\nu}$
 and its inverse
 with the exponent $e_o$ to be determined in terms of $\beta_{1j},\beta_{oj}, \alpha_{o,j}$, $j=1,2$.
We decompose
$-\frac{e_o}\nu =e_1 +e_2$ with $e_1\in \bN_0$.
 Therefore, each integral in \eqref{eq_pf_lem2_thm_product_mainexpression1}
 is equal to
 \begin{eqnarray}
\int_G
 X^{\beta_{12}}_{z_2=z}\tilde q_{\alpha_{o1}}(z_2) X^{\beta_{o2}}_x\kappa_{1,x}(z_2) \pi(z)^*   \pi(\id+\cR)^{e_1}
 \ \pi(\id+\cR)^{e_2} \nonumber\\
  \{R^{(
(\pi(\id+\cR)^{\frac{e_o}\nu} X^{\beta_{o1}}_x X_{x'}^{\beta_{11}} \Delta^{\alpha_{o2}}\sigma_2(xx',\pi)
\pi(X)^{\beta_2})}_{x'=0,M-[\beta_{11}]}\}(z^{-1})dz
.\nonumber
 \end{eqnarray}
 We can always write
$$
 \pi(z)^*  \pi(\id+\cR)^{e_1}
=  \left(   \sum^{--}_{[\beta']=e_1\nu} X_z^{\beta'}\pi(z) \right)^*,
$$
and, therefore, after integrating by parts, 
\eqref{eq_pf_lem2_thm_product_mainexpression1} is equal to
  \begin{eqnarray*}
  \sum^{--}_{[\beta'_1]+[\beta'_2]=[\beta']=e_1\nu}
  \int_G
X^{\beta'_1}_{z_2=z} X^{\beta_{12}}_{z_2=z}\tilde q_{\alpha_{o1}}(z_2) X^{\beta_{o2}}_x\kappa_{1,x}(z_2) \pi(z)^*  
 \pi(\id+\cR)^{e_2} \\
X^{\beta'_2}_{z_1=z}  \{R^{(
(\pi(\id+\cR)^{\frac{e_o}\nu} X^{\beta_{o1}}_x X_{x'}^{\beta_{11}} \Delta^{\alpha_{o2}}\sigma_2(xx',\pi)
\pi(X)^{\beta_2})}_{x'=0,M-[\beta_{11}]}\}(z_1^{-1})dz
.
 \end{eqnarray*}
 
We fix a pseudo-norm $|\cdot|$ on $G$. 
By  Taylor's estimates and easy manipulations,
we obtain:
\begin{eqnarray*}
&&\|\{ X^{\beta'_2}_{z_1=z}  \{R^{(
(\pi(\id+\cR)^{\frac{e_o}\nu} X^{\beta_{o1}}_x X_{x'}^{\beta_{11}} \Delta^{\alpha_{o2}}\sigma_2(xx',\pi)
\pi(X)^{\beta_2})}_{x'=0,M-[\beta_{11}]}\}(z_1^{-1})\|
\\ &&\leq C \sup_{x_1\in G}
\|\sigma_2(x,\pi)\|_{S^{m_2}_{\rho,\delta}, [\alpha_{o2}], b, [\beta_2]}
\!\!\!\!\!\!
\!\!\!\!\!\!
\!\!\!\!\!\!
\sum_{\substack{
[\gamma]>M-[\beta_{11}]-[\beta'_2]\\
|\gamma|<\lceil M-[\beta_{11}]-[\beta'_2]\rfloor +1}}
\sum_{\substack{[\beta_{o1}']\geq [\beta_{o1}]\\ |\beta_{o1}'|\leq |\beta_{o1}|}}
\!\!\!\!\!\!
|z|^{[\gamma]+[\beta'_{o1}]-[\beta_{o1}]},
\end{eqnarray*}
for $b:=
\max_{|\beta'_{o1}|\leq |\beta_o|} [\beta'_{o1}]+
\max_{|\gamma| <M+1}[\gamma]+e_1\nu +[\beta_1]$.
The last inequality is valid 
if we have chosen $e_o,e_1,e_2$ satisfying for all $\beta_{o1}'$ and $\beta'_2$ 
as in the sums above, the condition
$$
e_o\leq \rho[\alpha_{o2}]-m_2-\delta(
[\beta_{o1}']+[\gamma]+[\beta'_2]+[\beta_{11}])
 -[\beta_2],
$$
which is implied, in particular, by
$$
e_o\leq \rho[\alpha_{o2}]-m_2-\delta([\beta_{o1}]+M)
 -[\beta_2].
$$
It is now time to choose:
\begin{itemize}
\item $e_o=e_1=e_2=0$ 
if $\rho[\alpha_{o2}]-m_2-\delta([\beta_{o1}]+M)
 -[\beta_2]\geq 0$,
\item but if $\rho[\alpha_{o2}]-m_2-\delta([\beta_{o1}]+M)
 -[\beta_2]< 0$,
\end{itemize}
 $$
 e_o:=\rho[\alpha_{o2}]-m_2-\delta([\beta_{o1}]+M)
 -[\beta_2],\quad
 e_1=\lceil -\frac{e_o}\nu\rceil\in \bN,\quad
 e_2=-\frac{e_o}\nu -e_1<0.
 $$  
 We can now go back to
\eqref{eq_pf_lem2_thm_product_mainexpression1}
and obtain that
its operator norm is
 \begin{eqnarray*}
\leq C
\sup_{x\in G}
\|\sigma_2(x,\pi)\|_{S^m_{\rho,\delta}, [\alpha_{o2}], b, [\beta_2]}
  \sum_{[\beta'_1]+[\beta'_2]=e_1\nu}
\int_G |X^{\beta_{12}}(\tilde q_{\alpha_{o1}} X^{\beta_{o2}}_x\kappa_{1,x})(z)|
\\
\sum_{\substack{
[\gamma]>M-[\beta_{11}]-[\beta'_2]\\
|\gamma|<\lceil M-[\beta_{11}]-[\beta'_2]\rfloor +1}}
\sum_{\substack{[\beta_{o1}']\geq [\beta_{o1}]\\ |\beta_{o1}'|\leq |\beta_{o1}|}}
\!\!\!\!\!\!
|z|^{[\gamma]+[\beta'_{o1}]-[\beta_{o1}]}
dz
.
 \end{eqnarray*}
Since $X^{\beta_{12}}(\tilde q_{\alpha_{o1}} X^{\beta_{o2}}_x\kappa_{1,x})$
behaves like a Schwartz function away from the origin
and like $\leq C|z|^{-p_o}$
with $p_o$ depending on $m_1,\beta_{12},\alpha_{o1},\beta_{o2}$
 near the origin  (see Proposition \ref{prop_kernel} and Section \ref{subsec_1prop_symbols}), 
we see that all these integrals converge 
when $-p_o+[\gamma]+[\beta'_{o1}]-[\beta_{o1}]>-Q$
 for all indices as above,
and for this to be true it suffices that
$-p_o+M-[\beta_{11}]+e_o-\nu >-Q$.
We can always find $M$ such that this is satisfied 
and in this case the operator norm of 
\eqref{eq_pf_lem2_thm_product_mainexpression1}
is
$$
\leq C
\sup_{x\in G}
\|\sigma_2(x,\pi)\|_{S^{m_2}_{\rho,\delta}, [\alpha_{o2}], b, [\beta_2]}
\|\sigma_1\|_{S^{m_1}{\rho,\delta},a_1,b_1,c_1}
$$
for some (computable) integers $a_1,b_1,c_1$.

We choose $M$ the smallest integer which is a linear combination over $\bN_0$ of the weights $\upsilon_1,\ldots,\upsilon_n$
such that all the operator norm of  \eqref{eq_pf_lem2_thm_product_mainexpression1}
over 
$[\alpha_{o1}]+[\alpha_{o2}]=[\alpha_o]$,
$[\beta_{o1}]+[\beta_{o2}]=[\beta_o]$,
$[\beta_{11}]+[\beta_{12}]=[\beta_1]$ 
are finite as above.
This proves that 
the operator norm of 
$\pi(X)^{\beta_1} X_x^{\beta_o} \Delta^{\alpha_o}
\tau_M(x,\pi)
\pi(X)^{\beta_2}$
is estimated as stated.
\end{proof}

Hence Lemma \ref{lem_thm_composition} yields 
the following more precise version of Theorem \ref{thm_composition}.

\begin{corollary}
\label{cor_thm_composition}
Under the hypotheses of Theorem \ref{thm_composition},
writing $T_1=Op(\sigma_1)$ and $T_2=Op(\sigma_2)$, 
there exists a unique symbol $\sigma\in S^m_{\rho,\delta}$ such that $T_1T_2=Op(\sigma)$.
Furthermore
$$
\sigma - \sum_{[\alpha]\leq M} \Delta^\alpha \sigma_1 \ X_x^\alpha \sigma_2
\ \in S^{m-(\rho-\delta)M}_{\rho,\delta}
,
$$
and the mapping
$$
\left\{\begin{array}{rcl}
S^{m_1}_{\rho,\delta} \times S^{m_1}_{\rho,\delta}
&\longrightarrow& S^{m-(\rho-\delta)M}_{\rho,\delta} \\
(\sigma_1,\sigma_2) 
&\longmapsto& 
\sigma - \sum_{[\alpha]\leq M} \Delta^\alpha \sigma_1 \ X_x^\alpha \sigma_2
\end{array}\right.
,
$$
is continuous.
\end{corollary}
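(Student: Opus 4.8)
The plan is to derive the corollary from Lemma~\ref{lem_thm_composition}, the symbol calculus collected in Subsection~\ref{subsec_1prop_symbols}, and the injectivity of the quantisation. Following the formal discussion preceding Lemma~\ref{lem_thm_composition} (legitimate under the convention on regularised kernels), the kernel of $T:=T_1T_2$ is $\kappa_x(w)=\int_G\kappa_2(xz^{-1},wz^{-1})\kappa_1(x,z)\,dz$, and we set $\sigma(x,\pi):=\wh{\kappa_x}(\pi)$, so that $T=Op(\sigma)$. Since $\sigma\mapsto Op(\sigma)$ is linear and one-to-one (Subsection~\ref{subsec_quantization}), it will be enough to show $\sigma\in S^m_{\rho,\delta}$: then $\sigma$ is the unique symbol with $T=Op(\sigma)$, which in particular reproves Theorem~\ref{thm_composition}. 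For $M\in\bN_0$ write $\Sigma_M:=\sum_{[\alpha]\le M}\Delta^\alpha\sigma_1\,X_x^\alpha\sigma_2$ and $\tau_M:=\sigma-\Sigma_M$, the symbol appearing in Lemma~\ref{lem_thm_composition}. By points (1), (2), (4) of Subsection~\ref{subsec_1prop_symbols}, $\Delta^\alpha\sigma_1\,X_x^\alpha\sigma_2\in S^{m-(\rho-\delta)[\alpha]}_{\rho,\delta}$ with seminorms bounded by products of seminorms of $\sigma_1$ and of $\sigma_2$; hence $\Sigma_M\in S^m_{\rho,\delta}$, and for $M'\ge M$ the finite sum $\Sigma_{M'}-\Sigma_M$ belongs to $S^{m-(\rho-\delta)M}_{\rho,\delta}$ with all its seminorms controlled in the same way. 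Thus everything reduces to proving that, for every $M$, $\tau_M\in S^{m-(\rho-\delta)M}_{\rho,\delta}$ and that each of its seminorms in that class is dominated by a product of one seminorm of $\sigma_1$ and one of $\sigma_2$; this gives $\sigma=\Sigma_M+\tau_M\in S^m_{\rho,\delta}$, the asserted expansion, and, since $\tau_M$ depends bilinearly on $(\sigma_1,\sigma_2)$, the continuity of $(\sigma_1,\sigma_2)\mapsto\tau_M$ from $S^{m_1}_{\rho,\delta}\times S^{m_2}_{\rho,\delta}$ to $S^{m-(\rho-\delta)M}_{\rho,\delta}$.

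To estimate the $S^{m-(\rho-\delta)M}_{\rho,\delta}$-seminorms of $\tau_M$, fix $\alpha_o,\beta_o\in\bN_0^n$; by Comment~\ref{rem_gamma_countable} and Lemma~\ref{lem_cq_interpolation} it suffices to bound, for $\gamma$ ranging over $\nu\bZ$, the operator norm (uniformly in $x$ and $\pi$) of $\pi(\id+\cR)^{s_\gamma/\nu}X_x^{\beta_o}\Delta^{\alpha_o}\tau_M\,\pi(\id+\cR)^{-\gamma/\nu}$, where $s_\gamma:=\rho[\alpha_o]-m+(\rho-\delta)M-\delta[\beta_o]+\gamma$. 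Since $\cR$ is left-invariant and homogeneous of degree $\nu$, for $k\in\bN_0$ the operators $(\id+\cR)^k$ and $(\id+\tilde\cR)^k$ are linear combinations of $X^\beta$, respectively $\tilde X^{\tilde\beta}$, with $[\beta],[\tilde\beta]\le k\nu$, whereas for any real $t\le0$ the operators $(\id+\cR)^{t}$ and $(\id+\tilde\cR)^{t}$ are convolutions with $L^1$-Bessel potentials and hence bounded on $\cK(G)$. Decomposing each of the two $\pi(\id+\cR)$-factors into an integer-power part and a non-positive-exponent part, and using $\|\kappa*\kappa'\|_*\le\|\kappa\|_*\|\kappa'\|_*\le\|\kappa\|_{L^1}\|\kappa'\|_*$, one bounds the displayed operator norm by a finite linear combination of quantities $\|\pi(X)^\beta\{X_x^{\beta_o}\Delta^{\alpha_o}\tau_M\}\pi(X)^{\tilde\beta}\|_{op}$ in which $[\beta]$ and $[\tilde\beta]$ are controlled by $\alpha_o,\beta_o,\gamma,m,M$ alone.

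For a \emph{fixed} truncation $M$, Lemma~\ref{lem_thm_composition} bounds these $\pi(X)$-weighted quantities only for $\beta,\tilde\beta$ of bounded degree, namely those with $M_o(\beta,\tilde\beta,\beta_o,\alpha_o)<M$. The remedy is to unfold the expansion one step further: for the seminorm under consideration write $\tau_M=(\Sigma_{M'}-\Sigma_M)+\tau_{M'}$ with $M'$ chosen larger than all the relevant $M_o(\beta,\tilde\beta,\beta_o,\alpha_o)$. The contribution of $\Sigma_{M'}-\Sigma_M$ is exactly its $(\alpha_o,\beta_o,\gamma)$-seminorm as an element of $S^{m-(\rho-\delta)M}_{\rho,\delta}$, already bounded by products of seminorms of $\sigma_1$ and $\sigma_2$; for the $\tau_{M'}$-part one invokes Lemma~\ref{lem_thm_composition} to get $\|\pi(X)^\beta\{X_x^{\beta_o}\Delta^{\alpha_o}\tau_{M'}\}\pi(X)^{\tilde\beta}\|_{op}\le C\,\|\sigma_1\|_{S^{m_1}_{\rho,\delta},a_1,b_1,c_1}\|\sigma_2\|_{S^{m_2}_{\rho,\delta},a_2,b_2,c_2}$ for computable indices. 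Summing the two contributions gives the required bound on the $(\alpha_o,\beta_o,\gamma)$-seminorm of $\tau_M$; within any fixed seminorm of $S^{m-(\rho-\delta)M}_{\rho,\delta}$ the parameter $|\gamma|$ is bounded, so only finitely many values of $M'$ occur and the estimates are genuine jointly continuous bilinear bounds. No prior claim that $\tau_{M'}$ is itself a symbol is needed: only the operator-norm bounds furnished by Lemma~\ref{lem_thm_composition} are used.

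The main obstacle is precisely this matching of Lemma~\ref{lem_thm_composition} with the definition of the symbol classes: the lemma provides $\pi(X)$-weighted controls which, for a single finite $M$, cover only finitely many $\beta,\tilde\beta$, whereas membership of $\tau_M$ in $S^{m-(\rho-\delta)M}_{\rho,\delta}$ requires $\pi(\id+\cR)$-weighted controls with the unbounded parameter $\gamma$. Converting between the two weightings through the differential/Bessel decomposition of powers of $\cR$, and unfolding the asymptotic expansion up to an $M'=M'(\alpha_o,\beta_o,\gamma)$ adapted to the seminorm being estimated while absorbing the difference $\Sigma_{M'}-\Sigma_M$ into the already-understood symbol-algebra part, is what turns a fixed truncation $M$ into an honest symbol of class $S^{m-(\rho-\delta)M}_{\rho,\delta}$ rather than merely an operator enjoying good weighted estimates.
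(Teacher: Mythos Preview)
Your proof is correct and follows exactly the route the paper indicates: the corollary is stated there as an immediate consequence of Lemma~\ref{lem_thm_composition}, and you have supplied precisely the details left implicit—namely the conversion between $\pi(\id+\cR)$-weighted and $\pi(X)$-weighted estimates via the differential/Bessel-potential decomposition of powers of $\cR$, together with the unfolding $\tau_M=(\Sigma_{M'}-\Sigma_M)+\tau_{M'}$ that bridges the threshold $M_o$ in the lemma. This is the natural argument and matches the paper's intention.
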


With similar methods, we can prove that $\Psi^m_{\rho,\delta}$ is stable 
by taking the formal adjoint of an operator, that is, 
if $T\in\Psi^m_{\rho,\delta}$ then $T^*$ defined via
$(Tf_1,f_2)_{L^2} = (f_1,T^*f_2)_{L^2}$ is also in 
$\Psi^m_{\rho,\delta}$:
\begin{theorem}
Let $1\geq \rho\geq \delta \geq 0$ with $\rho\not=0$ and $\delta\not=1$.
If $T\in \Psi^m_{\rho,\delta}$, then its formal adjoint $T^*$
is also in $\Psi^m_{\rho,\delta}$.
More precisely, 
writing $T=Op(\sigma)$
there exists a unique symbol $\sigma^{(*)}\in S^m_{\rho,\delta}$ such that $T^*=Op(\sigma^{(*)})$.
Furthermore
$$
\sigma^{(*)} - \sum_{[\alpha]\leq M} 
\Delta^\alpha X_x^\alpha \sigma^* 
\ \in S^{m-(\rho-\delta)M}_{\rho,\delta}
,
$$
and the mapping
$$
\left\{\begin{array}{rcl}
S^{m}_{\rho,\delta} 
&\longrightarrow& S^{m-(\rho-\delta)M}_{\rho,\delta} \\
\sigma
&\longmapsto& 
\sigma^{(*)} - \sum_{[\alpha]\leq M} 
\Delta^\alpha X_x^\alpha \sigma^* 
\end{array}\right.
,
$$
is continuous.
\end{theorem}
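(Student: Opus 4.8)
The plan is to follow the proof of Theorem \ref{thm_composition} almost verbatim. As in Subsection \ref{subsec_composition}, I will first reduce to the case where the kernel $\kappa_x$ of $T=Op(\sigma)$ is smooth and compactly supported in its second variable, using the regularisation procedure described in the preceding subsection; it then suffices to produce estimates in $S^m_{\rho,\delta}$-semi-norms and pass to the limit. Starting from $Tf(x)=\int_G f(y)\kappa(x,y^{-1}x)\,dy$ and unravelling $(Tf_1,f_2)_{L^2}=(f_1,T^*f_2)_{L^2}$ by a change of variables, one checks that $T^*$ is again a right-convolution operator, $T^*f(x)=f*\kappa^{(*)}_x(x)$, with kernel
$$
\kappa^{(*)}_x(v)=\overline{\kappa_{xv^{-1}}(v^{-1})}=(\kappa^*)_{xv^{-1}}(v),
$$
where $\kappa^*_x(v)=\overline{\kappa_x(v^{-1})}$ is the involution kernel of Subsection \ref{subsec_1prop_symbols}, whose symbol is the pointwise adjoint $\sigma(x,\pi)^*=:\sigma^*(x,\pi)$. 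So, exactly as in the composition formula, $\kappa^{(*)}_x$ is obtained from $\kappa^*$ by a shift in the first variable; in particular the ``principal part'' of $\sigma^{(*)}$ should be $\sigma^*$.

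Next I will apply the left Taylor expansion of Subsection \ref{subsubsec_Taylor} to the function $x\mapsto(\kappa^*)_x(v)$ (with $v$ frozen), at the point $x$ with increment $v^{-1}$. Using $q_\alpha(v^{-1})=\tilde q_\alpha(v)$ this yields $\kappa^{(*)}_x(v)=\sum_{[\alpha]\le M}\tilde q_\alpha(v)\,X_x^\alpha(\kappa^*)_x(v)+R_{x,M}(v)$, with $R_{x,M}$ the corresponding remainder. Taking group Fourier transforms in $v$ (recalling $\Delta^\alpha\wh h(\pi)=\wh{\tilde q_\alpha h}(\pi)$, that $X_x$ commutes with $\cF_G$ and with $\Delta^\alpha$, and that $\cF_G[\kappa^*_x]=\sigma^*(x,\pi)$) gives
$$
\sigma^{(*)}(x,\pi)=\sum_{[\alpha]\le M}\Delta^\alpha X_x^\alpha\sigma^*(x,\pi)\ +\ \tau_M(x,\pi),\qquad \tau_M(x,\pi):=\cF_G\big[v\mapsto R_{x,M}(v)\big](\pi).
$$
By the calculus rules of Subsection \ref{subsec_1prop_symbols}, $\Delta^\alpha X_x^\alpha\sigma^*\in S^{m-(\rho-\delta)[\alpha]}_{\rho,\delta}$ with semi-norms controlled by those of $\sigma$, so the finite sum is a genuine symbol in $S^m_{\rho,\delta}$; the whole task is then to show $\tau_M\in S^{m-(\rho-\delta)M}_{\rho,\delta}$, again with semi-norms bounded by those of $\sigma$.

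The remainder estimate is the analogue of Lemma \ref{lem_thm_composition} and is where the work is. I will bound $\big\|\pi(X)^\beta\{X_x^{\beta_o}\Delta^{\alpha_o}\tau_M(x,\pi)\}\pi(X)^{\tilde\beta}\big\|_{op}$ by some $\|\sigma\|_{S^m_{\rho,\delta},a,b,c}$, for $M$ exceeding a threshold depending on $\beta,\tilde\beta,\beta_o,\alpha_o$, following the steps of Lemma \ref{lem_thm_composition}: distribute $X_x^{\beta_o}$, $\tilde q_{\alpha_o}$ and the derivatives $X_v^\beta,\tilde X_v^{\tilde\beta}$ across the Taylor identity above by Leibniz rules and the product identities for the $q_\alpha$ of Lemma \ref{lem_property_qalpha}, writing the operator in question as the group Fourier transform of an explicit function of $v$ built from (derivatives of) $\kappa^*_x$ and a Taylor remainder of $\Delta^{\alpha_{o2}}\sigma^*$; control that remainder by $S^m_{\rho,\delta}$-semi-norms of $\sigma^*$ (hence of $\sigma$) via the vector-valued Taylor estimate (Proposition \ref{prop_Taylor}, Remark \ref{rem_vector_valued_taylor}); insert $\pi(\id+\cR)^{e_1}$ and its inverse and rewrite $\pi(v)^*\pi(\id+\cR)^{e_1}$ as a linear combination of $\big(X_v^{\beta'}\pi(v)\big)^*$ so as to transfer derivatives onto the $\kappa^*_x$-factor by integration by parts, with $e_o,e_1,e_2$ chosen exactly as in Lemma \ref{lem_thm_composition}; and finally use Proposition \ref{prop_kernel} together with Subsection \ref{subsec_1prop_symbols} for the size of the $\kappa^*_x$-factor near $0$ and at infinity, so that the $v$-integrals converge once $M$ is large (a condition of the form $-p_o+M+e_o-\nu>-Q$). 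Uniqueness of $\sigma^{(*)}$ comes from injectivity of $\sigma\mapsto Op(\sigma)$, and continuity of the map $\sigma\mapsto\sigma^{(*)}-\sum_{[\alpha]\le M}\Delta^\alpha X_x^\alpha\sigma^*$ from the explicit semi-norm bounds produced along the way.

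The main obstacle is precisely this remainder estimate. The only genuinely new inputs beyond Theorem \ref{thm_composition} are the kernel identity $\kappa^{(*)}_x(v)=(\kappa^*)_{xv^{-1}}(v)$ and the fact, from Subsection \ref{subsec_1prop_symbols}, that $\sigma\mapsto\sigma^*$ is a continuous map on $S^m_{\rho,\delta}$; the rest is a careful but essentially mechanical repetition of the multi-index bookkeeping of Lemma \ref{lem_thm_composition}, the delicate points being the choice of the Rockland exponent $e_o$ and the appeal to Proposition \ref{prop_kernel} (itself only partially proved in this paper) to secure integrability of the $v$-integrals.
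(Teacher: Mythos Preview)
Your proposal is correct and follows essentially the same route as the paper: the paper itself computes the adjoint kernel $\kappa_x^{(*)}(y)=\kappa^*_{xy^{-1}}(y)$, Taylor-expands $\kappa^*$ in its first variable to obtain $\sigma^{(*)}\approx\sum_\alpha\Delta^\alpha X_x^\alpha\sigma^*$, and then simply says ``the proof proceeds in a similar way and is left to the reader'', referring back to Lemma~\ref{lem_thm_composition}. Your outline is in fact more detailed than what the paper provides; the only small remark is that the remainder here is structurally simpler than in the composition case (there is a single object $\kappa^*$/$\sigma^*$ rather than two separate factors $\kappa_1$ and $\sigma_2$), so your description of the integrand as ``built from (derivatives of) $\kappa^*_x$ \emph{and} a Taylor remainder of $\Delta^{\alpha_{o2}}\sigma^*$'' slightly overstates the bookkeeping---but the mechanism (Taylor estimate, Rockland insertion, integration by parts, then Proposition~\ref{prop_kernel} for integrability in $v$) is exactly right.
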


Indeed let us perform formal considerations analogous to the ones for the composition.
Let $T=Op(\sigma)\in \Psi^m_{\rho,\delta}$ with kernel $\kappa_x$.
It is not difficult to compute that the kernel of $T^*$ is $\kappa_x^{(*)}$
given by
$$
\kappa_x^{(*)}(y)=\kappa^*_{xy^{-1}}(y)=\bar \kappa_{xy^{-1}}(y^{-1})
.
$$
Using the Taylor expansion for $\kappa^*_x$ in $x$,
we obtain
$$
\kappa_x^{(*)}(y) \approx \sum_\alpha  
\tilde q_\alpha (y) X^\alpha_x\kappa_x^*(y)
.
$$
Denoting $\sigma^{(*)}$ the group Fourier transform of $\kappa^{(*)}$, 
we have
$$
\sigma^{(*)}(x,\pi):=\pi(\kappa_x^{(*)})\approx
\sum_\alpha 
 \Delta^\alpha X^\alpha_x \sigma(x,\pi)^* .
$$
From Subsection \ref{subsec_1prop_symbols}, 
we know that
$$
\sum_{[\alpha]\leq M} 
 \Delta^\alpha X^\alpha_x \sigma(x,\pi)^* 
\in S^{m-(\rho-\delta)M}_{\rho,\delta}
.
$$
Hence the main problem is as above
to control the remainder coming from the use of the Taylor expansion.
The proof proceeds in a similar way and is left to the reader.

Finally, we note that the proof of Theorem \ref{thm_composition}
can be adapted to provide the treatment of the remainder also
for composition of operators on compact Lie groups in
\cite[Theorem 10.7.8]{ruzh+turunen_bk2010}.

\end{document}